\newtheorem{thm}{Theorem}[section]
\newcommand{\bt}{\begin{thm}}
\newcommand{\et}{\end{thm}}
\newtheorem{conj}[thm]{Conjecture}
\newtheorem{cor}[thm]{Corollary}   %remember switch all {coro} to {cor}
\newcommand{\bc}{\begin{cor}}
\newcommand{\ec}{\end{cor}}
\newtheorem{lem}[thm]{Lemma}   %remember to switch all {lem} to {lem}
\newcommand{\bl}{\begin{lem}}
\newcommand{\el}{\end{lem}}
\newtheorem{prop}[thm]{Proposition}
\newcommand{\bp}{\begin{prop}}
\newcommand{\ep}{\end{prop}}
\newtheorem{defn}[thm]{Definition}
\newcommand{\bd}{\begin{defn}}    % This produces an error????    
\newcommand{\ed}{\end{defn}}
\newtheorem{rmrk}[thm]{Remark}   %remember to switch all {rmrk} to {rmrk}
\newcommand{\br}{\begin{rmrk}}
\newcommand{\er}{\end{rmrk}}
\newtheorem{example}[thm]{Example}
\newcommand{\mGHto}{\stackrel { \textrm{mGH}}{\longrightarrow} }
\newcommand{\GHto}{\stackrel { \textrm{GH}}{\longrightarrow} }
\newcommand{\Fto}{\stackrel {\mathcal{F}}{\longrightarrow} }
\newcommand{\be}{\begin{equation}}
\newcommand{\ee}{\end{equation}}
\newcommand{\N}{\mathbb{N}}
\newcommand{\R}{\mathbb{R}}
\newcommand{\One}{{\bf \rm{1}}}
\newcommand{\E}{\mathbb{E}}
\newcommand{\dist}{\operatorname{dist}}
\newcommand{\diam}{\operatorname{Diam}}
\newcommand{\Fm}{{\mathcal F}}
\newcommand{\set}{\rm{set}}
\newcommand{\Scal}{{\rm Scal}} % new Jan 2016
\newcommand{\disjointunion}{\sqcup}
\newcommand{\Lip}{\operatorname{Lip}}
\newcommand{\mass}{{\mathbf M}}
\newcommand{\intcurr}{{\mathbf I}}      %metric integral current%
\newcommand{\vol}{\operatorname{Vol}}
\newcommand{\rstr}{\:\mbox{\rule{0.1ex}{1.2ex}\rule{1.1ex}{0.1ex}}\:}
\newcommand{\bdry}{\partial}
\begin{document}

\title{Sewing Riemannian Manifolds with Positive Scalar Curvature}

\author{J. Basilio}
\thanks{J. Basilio was partially supported as a doctoral student by NSF DMS 1006059.}
\address{CUNY Graduate Center and Sarah Lawrence College}
\email{jorge.math.basilio@gmail.com}

\author{J. Dodziuk}
\address{CUNY Graduate Center and Queens College}
\email{jdodziuk@gmail.com}

\author{C. Sormani}
\thanks{C. Sormani was partially supported by NSF DMS 1006059.}
\address{CUNY Graduate Center and Lehman College}
\email{sormanic@gmail.com}

%\date{February 2017}

\keywords{}

%49Q15 (Geometric measure and integration theory, integral and normal currents)

%\subjclass[2000]{49Q15}

\begin{abstract}
We explore to what extent one may hope to preserve geometric
properties of three dimensional manifolds with
lower scalar curvature bounds under Gromov-Hausdorff and
Intrinsic Flat limits.   We introduce a new construction, called sewing, of three
dimensional manifolds that preserves positive scalar curvature.
We then use sewing to produce sequences of such manifolds which converge to spaces that fail to have  nonnegative scalar curvature in a standard  
generalized sense.   Since the notion of nonnegative scalar
curvature is not strong enough to persist alone, we propose that
one pair a lower scalar curvature bound with a lower bound on the area of
a closed minimal surface when taking sequences as this will exclude the possibility
of sewing of manifolds.   
\end{abstract}

\maketitle

\section{Introduction}

In this paper we study three dimensional manifolds with positive scalar curvature.
The scalar curvature of a Riemannian manifold is the average of the Ricci curvatures which in turn is the average of the sectional curvatures.   It can be
determined more simply by taking the following limit:
\be \label{sphere-vol}
\Scal(p)= \lim_{r\to 0} 30 \frac{\vol_{\E^3}(B(0,r))-\vol_{M^3}(B(p, r))}{r^2\vol_{\E^3}(B(0,r))}
\ee
where $\vol_{\E^3}(B(0,r))=(4/3) \pi r^3$ and $\vol_{M^3}(B(p, r))$ is the
Hausdorff measure of the ball about $p$ of radius $r$ in our manifold, $M^3$.

In \cite{Gromov-Plateau}, Gromov asks the following pair of deliberately vague questions which we paraphrase here:
{\em Given a class of Riemannian manifolds, $\mathcal{B}$, 
what is the weakest notion of convergence such that a
sequence of manifolds, $M_j \in \mathcal{B}$,  
subconverges to a limit $M_\infty \in \mathcal{B}$
where now we will expand $\mathcal{B}$ to include singular metric spaces?  
What is this generalized class of singular metrics spaces that should be
included in $\mathcal{B}$? } Gromov points out that when $\mathcal{B}$ is
the class of Riemannian manifolds with nonnegative sectional curvature then the
``best known'' answer to this question is Gromov-Hausdorff convergence
and the singular limit spaces are then Alexandrov spaces with nonnegative Alexandrov curvature.  When $\mathcal{B}$ is the class of Riemannian manifolds with nonnegative Ricci curvature, one uses Gromov-Hausdorff and metric measure convergence to obtain limits which are metric measure spaces with generalized nonnegative Ricci curvature as in work of Cheeger-Colding \cite{ChCo-PartI}.  Work towards defining classes of singular metric measure spaces with generalized notions of nonnegative Ricci has been completed by Lott-Villani, Sturm, Ambrosio-Gigli-Savare and others
\cite{Lott-Villani-09} \cite{Sturm-curv} \cite{AGS}.  

Gromov then writes that {\em ``the most tantalizing 
relation $\mathcal{B}$ is expressed with the scalar curvature by $\Scal \ge k$''} \cite{Gromov-Plateau}.   Bamler \cite{Bamler-16} and Gromov \cite{Gromov-Dirac} have proven that under $C^0$ convergence to smooth Riemannian limits $\Scal \ge 0$ is preserved.   In order to find the weakest notion of convergence which preserves $\Scal \ge 0$ in some sense, Gromov has suggested that one might investigate intrinsic flat convergence \cite{Gromov-Plateau}.   
The intrinsic flat distance
was first defined in work of the third author with Wenger \cite{SW-JDG}, who also proved that for noncollapsing sequences of manifolds with nonnegative Ricci curvature, intrinsic flat limits agree with Gromov-Hausdorff and metric measure limits \cite{SW-CVPDE}.   Intrinsic flat convergence is a weaker notion of convergence in the sense that there are sequences of manifolds with no Gromov-Hausdorff limit that have intrinsic flat limits, including Ilmanen's Example of a sequence of three spheres with positive scalar curvature \cite{SW-JDG}.  The third author has investigated intrinsic flat limits of manifolds with nonnegative scalar curvature under additional conditions with Lee, Huang, LeFloch and Stavrov \cite{LeeSormani1}\cite{HLS}\cite{LeFloch-Sormani-1} \cite{Sormani-Stavrov-1}.  These papers support Gromov's suggestion in the sense that the limits obtained in these papers have generalized nonnegative scalar curvature.   

Here we construct a sequence of Riemannian manifolds, $M_j^{3}$, with positive
scalar curvature that converges in the intrinsic flat, metric measure and Gromov-Hausdorff sense to a singular limit space, $Y$, 
which fails to satisfy (\ref{sphere-vol}) [Example~\ref{sphere-geod}].    In fact,
the limit space is a sphere with a pulled thread:
\be 
Y= {\mathbb S}^3 / \sim \textrm{ where } a\sim b \textrm{ iff } a,b \in C,
\ee
where $C$ is one geodesic in $\mathbb{S}^3$ (see Section~\ref{sect-pulled-string}).
The scalar curvature
about the point $p_0 =[C(t)]$ formed from the pulled thread is computed 
in Lemma~\ref{bad-point} to be
\be\label{sphere-vol-bad}
\lim_{r\to 0} \,\,\frac{\,\,\vol_{\E^3}(B(0,r))-\vol_{M^3}(B(p, r))\,\,}{r^2\vol_{\E^3}(B(0,r))}\,\,=\,\,-\infty.
\ee
In this sense the limit space does not have generalized nonnegative scalar curvature.

We construct our sequence using a new method we call sewing 
developed in Propositions~\ref{prop-glue}-\ref{sewn-curve}.  Before we can sew the manifolds, the first two authors construct short tunnels
between points in the manifolds building on prior work of Gromov-Lawson and
Schoen-Yau in
\cite{Gromov-Lawson-tunnels} \cite{Schoen-Yau-tunnels}.  The details of this construction are in the Appendix.   In a subsequent paper \cite{Basilio-Sormani-1} we will extend this sewing technique to also provide examples whose limit spaces fail to satisfy the Scalar Torus Rigidity Theorem \cite{Schoen-Yau-tunnels} \cite{Gromov-Lawson-tunnels} and the Positive Mass Rigidity Theorem \cite{Schoen-Yau-positive-mass}.   These examples, all constructed using the sewing techniques developed in this paper, demonstrate that Gromov-Hausdorff and Intrinsic Flat limit spaces of noncollapsing sequences of manifolds with
positive scalar curvature may fail to satisfy key properties of nonnegative scalar curvature. 

In light of these counter examples and the aforementioned positive results towards Gromov's conjecture, the third author has suggested in \cite{Sormani-scalar} to adapt the class $\mathcal{B}$.   There it is proposed that the 
initial class of smooth Riemannian manifolds in $\mathcal{B}$ should have
nonnegative scalar curvature, a uniform lower bound on volume (as assumed implicitly by Gromov), and also a uniform lower bound on the minimal area of a closed minimal surface in the manifold, $\textrm{MinA}(M)$.  
The sequences 
of $M_j^{3}$ we construct using our new sewing methods have positive scalar curvature and a uniform
lower bound on volume, but $\textrm{MinA}(M_j)\to 0$.  Intuitive reasons as to
why a uniform lower bound on  
$\textrm{MinA}(M_j)$ is a natural condition are described in \cite{Sormani-scalar}
along with a collection of related conjectures and open problems.
Here we will simply propose
the following possible revision of Gromov's vague conjecture:  

\begin{conj}
Suppose a sequence of Riemannian manifolds, $M^3_j$, have 
\be
\Scal_j\ge 0, \vol(M_j)\ge V_0>0, \textrm{ and }
\textrm{MinA}(M_j)\ge A_0>0,   
\ee
and the sequence converges in the intrinsic flat sense, $M_j \Fto M_\infty$. 

Then at every point $p\in M_\infty$ we have
\be\label{sphere-vol-good}
\lim_{r\to 0} \,\,\frac{\,\,\vol_{\E^3}(B(0,r))-\vol_{Y}(B(p, r))\,\,}{r^2\vol_{\E^3}(B(0,r))}\,\,\ge \,\, 0.
\ee
\end{conj}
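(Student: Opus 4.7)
The plan is to combine three ingredients: the smooth pointwise expansion (\ref{sphere-vol}); a ball-volume continuity statement along the intrinsic flat convergence; and a uniform local bound on $\vol_{M_j}(B(p_j,r))$ coming from $\Scal_j \ge 0$. The guiding heuristic is that the counterexample constructed earlier in this paper produces (\ref{sphere-vol-bad}) because a $1$-dimensional pulled thread collects a neighborhood whose volume is of order $r^2 L$ rather than $r^3$. The uniform lower bound on $\textrm{MinA}$ is meant precisely to forbid the thin tunnels responsible for such volume concentration, so the strategy is to turn this heuristic into a quantitative exclusion.

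First I would establish a ball-volume convergence statement: for each $p \in M_\infty$ there exist points $p_j \in M_j$ (in an isometric realization of the intrinsic flat convergence) with $p_j \to p$ so that for almost every small $r>0$
\be
\vol_Y(B(p,r)) \,=\, \lim_{j \to \infty} \vol_{M_j}(B(p_j, r)).
\ee
This is a Portegies-type continuity result, where the MinA hypothesis is used decisively: if a ball $B(p_j, r)$ contained a thin tunnel through which volume escaped into the intrinsic flat limit, then a min-max or mean-curvature-flow argument inside the tunnel would produce a closed minimal surface of area tending to zero, contradicting $\textrm{MinA}(M_j) \ge A_0 > 0$. Thus under our hypotheses balls in $M_j$ converge as genuine $3$-dimensional regions.

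Second, at the points $p_j$ the smooth expansion (\ref{sphere-vol}) together with $\Scal_j(p_j) \ge 0$ should yield, on a scale $r \le r_0$ uniform in $j$, the comparison
\be
\vol_{M_j}(B(p_j,r)) \,\le\, \vol_{\E^3}(B(0,r)) \,+\, o(r^5).
\ee
Dividing by $r^2 \vol_{\E^3}(B(0,r))$, passing $j \to \infty$ via the first step, and then letting $r \to 0$, would give (\ref{sphere-vol-good}).

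The main obstacle is producing a scale $r_0$ of uniform validity in the second step: the expansion (\ref{sphere-vol}) on a single fixed smooth manifold is harmless, but along a sequence the remainder depends on higher derivatives of the metric at $p_j$, which are not controlled by $\Scal_j \ge 0$ alone. One would need a form of $\epsilon$-regularity for scalar curvature, analogous to Cheeger-Naber's theory in the Ricci setting but compatible with the MinA hypothesis, showing that away from the sewing-type singularities scalar nonnegativity propagates into a uniform local volume comparison. Establishing such a regularity theorem, and the matching volume-continuity-along-$\Fto$ statement in the first step, appears to be the crux of the conjecture and the principal open problem suggested by the constructions of the present paper.
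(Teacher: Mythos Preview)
The statement you are attempting to prove is labelled \emph{Conjecture} in the paper, not Theorem or Proposition, and the paper contains no proof of it. The authors present it explicitly as an open problem motivated by their sewing construction; the entire point of Example~\ref{sphere-geod} and Lemma~\ref{bad-point} is to show that without the $\textrm{MinA}$ hypothesis the conclusion fails, thereby justifying the addition of that hypothesis. So there is no ``paper's own proof'' to compare against.

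Your proposal is accordingly not a proof but a program, and you yourself identify its two genuine gaps. The first step---volume continuity of balls under intrinsic flat convergence in the presence of a $\textrm{MinA}$ lower bound---is not established anywhere in the literature cited; Portegies' result assumes mass convergence, which is an additional hypothesis you would have to derive, and the heuristic ``a thin tunnel would contain a small minimal surface'' is exactly the kind of statement that would need a rigorous min-max or trapping argument. The second step is more serious: the expansion (\ref{sphere-vol}) has a remainder controlled by fourth-order derivatives of the metric, and nothing in the hypotheses bounds those uniformly along the sequence. An $\epsilon$-regularity theory for scalar curvature analogous to Cheeger--Colding or Cheeger--Naber does not currently exist, and producing one is precisely the open problem the conjecture is pointing at. Your outline is a reasonable summary of what one would \emph{like} to do, but it does not constitute a proof, and the paper does not claim one.
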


This paper is part of the work towards Jorge Basilio's doctoral dissertation at the
CUNY Graduate Center conducted under the advisement of Professors J\'ozef Dodziuk and Christina Sormani.   We would like to thank Jeff Jauregui, Marcus Khuri, Sajjad Lakzian, Dan Lee, Raquel Perales, Conrad Plaut, 
and Catherine Searle      
%maybe add Dan King and Philip Ording and others who invited speakers
for their interest in this work.

%%%%%%%%%%%%%%%%%%

\section{Background}\label{S:background}

In this section we first briefly review Gromov-Lawson and Schoen-Yau's work.
We then review Gromov-Hausdorff, Metric Measure, and Intrinsic Flat
Convergence covering the key definitions as well as theorems applied in this
paper to prove our example converges with respect to all three notions of
convergence.

\subsection{Gluing Gromov-Lawson and Schoen-Yau tunnels}\label{sect-tunnel-back}

%We ought to explain a bit about the Schoen-Yau construction vs the Gromov-Lawson one.

Using different techniques, Gromov-Lawson and Schoen-Yau described how to
construct tunnels diffeomorphic to ${\mathbb{S}}^2 \times [0,1]$
with metric tensors of positive scalar curvature that can be glued smoothly
into three dimensional spheres of constant sectional curvature \cite{Gromov-Lawson-tunnels}\cite{Schoen-Yau-tunnels}.   See Figure~\ref{Fig.SY-GL-tunnel}.
These tunnels are the first crucial piece for our construction.

\begin{figure}[h]
\includegraphics[scale=0.15]{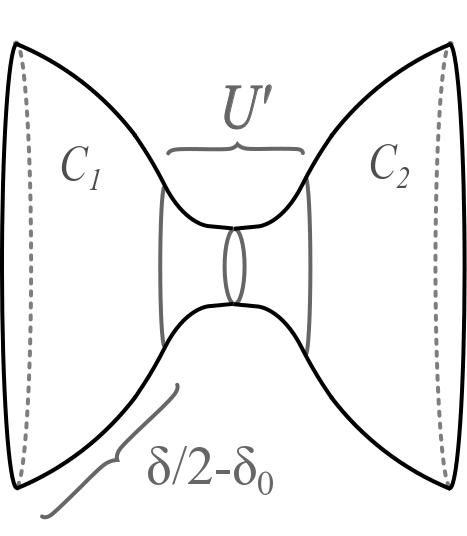}
\caption{The Tunnel}
\label{Fig.SY-GL-tunnel}
\end{figure}

Here we need to explicitly estimate the volume and diameter of these tunnels.   So the first and second authors prove the following
lemma in the appendix.

% BEGIN LEMMA STATEMENT
\begin{lem}%(\cite{Gromov-Lawson-tunnels}\cite{Schoen-Yau-tunnels})
\label{tunnellemma}
	Let $0<\delta/2 < 1$. % ADDED 3-31-16 
	Given a complete Riemannian manifold, $M^3$, 
	that contains two balls $B(p_i,\delta/2)\subset M^3$, $i=1,2$, with constant positive sectional curvature $K \in (0,1]$ on the balls, 
	and given any $\epsilon>0$, there exists a $\delta_0>0$ sufficiently small so that we may create a new
	complete Riemannian manifold, $N^3$, 
	in which we remove two balls and glue in a cylindrical region, $U$, between them:
	\be\label{TL-sewnN}
	N^3=M^3 \setminus \left(B(p_1,\delta/2)\cup B(p_2,\delta/2)\right) \disjointunion U
	\ee
	where $U=U(\delta_0)$ has a metric of positive scalar curvature (See Figure~\ref{Fig.SY-GL-tunnel}) with
	\be\label{TL-diameterU}
	\diam(U) \le h=h(\delta), 
	%=\left(\frac{\pi}{2}+\frac{1}{2}\right)\delta + \left(\frac{2}{M}+\frac{K}{M}\right)\delta_0 + \left(4+\frac{K}{M}\right)\sigma % CLEAN UP FURTHER % CHECK GOES TO ZERO WITH DELTA TO 0
	\ee
	where
	\be\label{TL-diameterUorderdelta}
	h(\delta)=O(\delta),
	\ee
	hence,
	\be\label{TL-tunnellengthtozero}
	\lim_{\delta\to 0} h(\delta)=0 %\quad (\textrm{or }O(\delta)) 
	\textrm{ uniformly for } K\in (0,1].
	\ee
	The collars $C_i= B(p_i,\delta/2) \setminus B(p_i,\delta_0)$ identified with subsets of
	$N^3$ have the original metric of constant curvature and the tunnel $U'=U\setminus (C_1\cup C_2)$ has arbitrarily small diameter $O(\delta_0)$ and volume $O(\delta_0^3)$. 
	Therefore with appropriate choice of $\delta_0$, we have
	\be\label{TL-volumeestU}
	(1-\epsilon) 2\vol(B(p,\delta/2)) \le \vol(U) \le (1+\epsilon) 2\vol(B(p,\delta/2))
	\ee
	and
	\be\label{TL-volumeestN}
	(1-\epsilon) \vol(M) \le \vol(N) \le (1+\epsilon) \vol(M).
	\ee
	
\end{lem}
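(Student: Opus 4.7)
The plan is to execute the Gromov-Lawson and Schoen-Yau tunnel construction \cite{Gromov-Lawson-tunnels}\cite{Schoen-Yau-tunnels} with explicit quantitative control on the geometry. Recall that the basic idea is to realize each spherical ball $B(p_i,\delta/2)$ as a geodesic ball in a round 3-sphere of constant curvature $K$, then inside a much smaller inner ball $B(p_i,\delta_0)$ perform a smooth rotationally symmetric bending (either the Gromov-Lawson curve in a warped-product representation or the Schoen-Yau conformal deformation) that narrows the ball down to a round $S^2$ neck while preserving positivity of the scalar curvature. Two such bent half-necks are then glued along their round $S^2$ boundaries of matching radius, producing the cylindrical tunnel $U$ of positive scalar curvature.

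Concretely, I would decompose
\[
U = C_1 \cup U' \cup C_2, \qquad C_i = B(p_i,\delta/2) \setminus B(p_i,\delta_0),
\]
where the collar $C_i$ inherits the unperturbed constant curvature $K$ metric (so $\Scal = 6K > 0$ there and the gluing at $\partial B(p_i,\delta/2)$ is automatically smooth), and $U'$ is the bent inner neck. Since the bending is confined to $B(p_i,\delta_0)$ and preserves positive scalar curvature, the whole tunnel $U$ carries a smooth metric of positive scalar curvature. For the diameter estimate I would use
\[
\diam(U) \le \diam(C_1) + \diam(U') + \diam(C_2) \le 2\delta + O(\delta_0),
\]
which gives (\ref{TL-diameterUorderdelta}) uniformly in $K \in (0,1]$ because the curvature radius $1/\sqrt{K} \ge 1$. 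Driving $\delta_0 \to 0$ simultaneously forces $\vol(U') = O(\delta_0^3)$ and $\diam(U') = O(\delta_0)$ to zero, while $\vol(C_i) \to \vol(B(p_i,\delta/2))$, yielding the two-sided volume pinching (\ref{TL-volumeestU}). Estimate (\ref{TL-volumeestN}) is then immediate since $N \setminus U = M \setminus (B(p_1,\delta/2) \cup B(p_2,\delta/2))$, so
\[
\vol(N) - \vol(M) = \vol(U) - 2\vol(B(p,\delta/2)),
\]
and further shrinking $\delta_0$ makes this difference an arbitrarily small fraction of $\vol(M)$ (adjusting the $\epsilon$ appearing in (\ref{TL-volumeestU}) by the ratio $2\vol(B(p,\delta/2))/\vol(M)$).

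The main technical obstacle is ensuring the scalar-curvature-preserving bending can be carried out with estimates uniform over $K \in (0,1]$, and with the $C^2$-smooth matching to the round collar at $\partial B(p_i,\delta_0)$. One must verify that the Gromov-Lawson bending curve (and its first two derivatives) can be chosen with bounds independent of $K$ in the stated range, so that both positivity of the scalar curvature and the $O(\delta)$ diameter bound persist as $K$ varies. Once this $K$-uniformity of the bending is established, the smooth-gluing claim, (\ref{TL-diameterUorderdelta}), and the uniform limit (\ref{TL-tunnellengthtozero}) all follow from $h(\delta) = O(\delta)$, and the volume estimates are purely elementary consequences of the decomposition above.
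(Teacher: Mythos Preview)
Your outline correctly identifies the decomposition $U = C_1 \cup U' \cup C_2$ and the elementary reductions for the diameter and volume estimates once one knows $\diam(U') = O(\delta_0)$ and $\vol(U') = O(\delta_0^3)$. However, you treat these two estimates on $U'$ as essentially given by the Gromov--Lawson construction, with the only remaining issue being uniformity in $K$. This is the gap: the original Gromov--Lawson bending is purely topological and gives no length control at all --- as the paper emphasizes, their tunnels ``may be thin but long.'' The scalar curvature formula for the surface of revolution generated by the profile curve $\gamma$ forces the geodesic curvature of $\gamma$ to satisfy
\[
k(s) < \frac{\sin\theta(s)}{2\,x_1(s)},
\]
which severely limits the rate at which you can bend $\gamma$ from vertical to horizontal. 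A priori this could force the curve (and hence the neck) to be long; getting $L(\gamma) = O(\delta_0)$ is the whole point of the lemma, not a technicality.

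The paper handles this by an explicit inductive construction of $k(s)$ as a step function: at each stage one sets $k_i = \sin(\theta_{i-1})/(4b_{i-1})$ and $\Delta s_i = b_{i-1}/2$, where $b_i$ is the current height of $\gamma$ above the axis. A key lemma shows that the heights decay geometrically, $b_i \le C\,b_{i-1}$ with a universal $C < 1$ independent of $\delta_0$ and $K$, so the total arclength is bounded by a convergent geometric series of order $b_0 < \delta_0$. This gets $\theta$ up to $\pi/4$ in length $O(\delta_0)$; a single final step then reaches $\pi/2$. A separate smoothing argument upgrades the piecewise $C^1$ curve to $C^\infty$ while preserving the curvature inequality. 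The $K$-uniformity you flag is a genuine concern, but it falls out of this same argument because the constant $C$ is universal; the hard part you are missing is the geometric-decay mechanism that makes the neck short in the first place.
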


We note that if $M^3$ has positive scalar curvature then so does $N^3$ and that, after inserting the tunnel,
$\partial B(p_1,\delta/2)$ and $\partial B(p_2,\delta/2)$ are arbitrarily close together because of (\ref{TL-tunnellengthtozero}).   Note that we have restricted to three dimensions here and required constant sectional curvature on the balls for simplicity.  The first two authors will generalize these conditions in future work.   This lemma suffices for proving all the examples in this paper.

% Review GH conv
\subsection{Review GH Convergence} \label{sect-GH-back}

Gromov introduced the Gromov-Hausdorff distance in \cite{Gromov-metric}.

First recall that $\varphi: X \to Y$ is distance preserving iff
\be
d_Y(\varphi(x_1), \varphi(x_2)) = d_X(x_1, x_2) \qquad \forall x_1, x_2 \in X.
\ee
This is referred to as a metric isometric embedding in \cite{LeeSormani1}
and is distinct from a Riemannian isometric embedding.

\begin{defn}[Gromov]\label{defn-GH} 
The Gromov-Hausdorff distance between two 
compact metric spaces $\left(X, d_X\right)$ and $\left(Y, d_Y\right)$
is defined as
\be \label{eqn-GH-def}
d_{GH}\left(X,Y\right) := \inf  \, d^Z_H\left(\varphi\left(X\right), \psi\left(Y\right)\right)
\ee
where $Z$ is a complete metric space, and $\varphi: X \to Z$ and $\psi:Y\to Z$ are
distance preserving maps and where the Hausdorff distance in $Z$ is defined as
\be
d_{H}^Z\left(A,B\right) = \inf\{ \epsilon>0: A \subset T_\epsilon\left(B\right) \textrm{ and } B \subset T_\epsilon\left(A\right)\}.
\ee
\end{defn}

Gromov proved that this is indeed a distance on compact metric spaces: $d_{GH}\left(X,Y\right)=0$
iff there is an isometry between $X$ and $Y$.   When studying metric
spaces which are only precompact, one may take their metric
completions before studying the Gromov-Hausdorff distance
between them.   

We write 
\be
X_j \GHto X_\infty \,\,\,\textrm{ iff }\,\,\, d_{GH}(X_j, X_\infty) \to 0.
\ee
Gromov proved that if $X_j \GHto X_\infty$ then there is a common
compact metric space $Z$ and distance preserving maps $\varphi_j: X_j \to Z$
such that 
\be \label{common-Z-GH}
d^Z_H(\varphi_j(X_j), \varphi_\infty(X_\infty))\to 0.
\ee
We say $p_j \in X_j$ converges to $p_\infty\in X_\infty$ if there is such a set of maps such that $\varphi_j(p_j)$
converges to $\varphi_\infty(p_\infty)$ as points in $Z$.  These limits are not uniquely defined but they are useful and every point in the limit space is a limit of such a sequence in this sense.  

\begin{thm} [Gromov] \label{almost-isom} %this is used
Suppose $\epsilon_j \to 0$. If a sequence of metric spaces $(X_j, d_j)$ have $\epsilon_j$ almost isometries
\be
F_j: X_j \to X_\infty
\ee
such that
\be
|d_\infty(F_j(p), F_j(q)) - d_j(p,q)| \le \epsilon_j \qquad \forall p,q\in X_j
\ee
and
\be
X_\infty \subset T_{\epsilon_j}(F_j(X_j))
\ee
 then
\be
X_j \GHto X_\infty.
\ee
\end{thm}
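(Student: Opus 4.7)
The plan is to realize both $X_j$ and $X_\infty$ as distance-preserving subsets of a single compact metric space $Z_j$ in which the Hausdorff distance between their images is $O(\epsilon_j)$, and then apply Definition~\ref{defn-GH} directly. The natural choice is $Z_j=X_j\disjointunion X_\infty$ equipped with a metric that glues the two given metrics through the almost-isometry $F_j$.

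Concretely, I would define $d_{Z_j}$ to agree with $d_j$ on $X_j$ and with $d_\infty$ on $X_\infty$, and for $x\in X_j$, $y\in X_\infty$ set
\be
d_{Z_j}(x,y) \;=\; \epsilon_j \;+\; \inf_{p\in X_j}\bigl(d_j(x,p)+d_\infty(F_j(p),y)\bigr).
\ee
The additive $\epsilon_j$ is needed to absorb the distortion of $F_j$ when later checking that the restriction to $X_j\times X_j$ is unchanged. With this definition, taking $p=x$ in the infimum gives $d_{Z_j}(x,F_j(x))\le \epsilon_j$, so each point of $X_j$ lies within $\epsilon_j$ of $F_j(X_j)\subset X_\infty$; and if $y\in X_\infty$ is chosen, the hypothesis $X_\infty\subset T_{\epsilon_j}(F_j(X_j))$ gives some $q\in X_j$ with $d_\infty(F_j(q),y)\le\epsilon_j$, hence $d_{Z_j}(q,y)\le 2\epsilon_j$. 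Thus the Hausdorff distance of the two inclusions is at most $2\epsilon_j$.

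The technical work is verifying the triangle inequality, and this is where I expect the main obstacle. The three cases with one or two points in $X_\infty$ are straightforward from the infimum definition. The delicate case is $x,x'\in X_j$ with a middle point $y\in X_\infty$: choose near-optimal $p$ for $(x,y)$ and $p'$ for $(x',y)$, then
\be
d_j(x,x') \le d_j(x,p)+d_j(p,p')+d_j(p',x'),
\ee
and the almost-isometry hypothesis $|d_\infty(F_j(p),F_j(p'))-d_j(p,p')|\le\epsilon_j$ together with the triangle inequality in $X_\infty$ through $y$ converts this to $d_j(x,x')\le d_{Z_j}(x,y)+d_{Z_j}(y,x')-\epsilon_j$. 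The symmetric case with two $X_\infty$ points and an intermediate $X_j$ point works identically. The buffer $\epsilon_j$ added in the definition is precisely what the distortion term requires, and the same buffer gives room to check that $d_{Z_j}$ is nondegenerate away from the diagonal.

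Once $Z_j$ is shown to be a genuine (compact) metric space with the inclusions distance-preserving and with Hausdorff distance $\le 2\epsilon_j$, Definition~\ref{defn-GH} yields $d_{GH}(X_j,X_\infty)\le 2\epsilon_j\to 0$, proving $X_j\GHto X_\infty$. If either $X_j$ or $X_\infty$ is only precompact rather than compact, I would first pass to metric completions, noting that the almost-isometry extends continuously and the estimates survive.
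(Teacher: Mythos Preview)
The paper does not give its own proof of this theorem: it is stated in Section~\ref{sect-GH-back} as background, attributed to Gromov, and used without argument. So there is nothing in the paper to compare against.

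Your argument is the standard gluing construction (as in, e.g., Burago--Burago--Ivanov) and is correct. The metric you define on $Z_j=X_j\disjointunion X_\infty$ is genuine, and your verification of the critical triangle-inequality case is right: the additive $\epsilon_j$ buffer is exactly what is needed to absorb the distortion $|d_\infty(F_j(p),F_j(p'))-d_j(p,p')|\le\epsilon_j$ when passing through a point of the other space. One small point you should make explicit: the positivity of $d_{Z_j}$ across the two pieces (i.e.\ $d_{Z_j}(x,y)>0$ for $x\in X_j$, $y\in X_\infty$) relies on $\epsilon_j>0$; if some $\epsilon_j=0$ you can replace it by any positive null sequence without affecting the conclusion. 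With that caveat, $d_{GH}(X_j,X_\infty)\le 2\epsilon_j\to 0$ follows exactly as you say.
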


Note that $p_j \in X_j$ converges to $p_\infty\in X_\infty$
if $F_j(p_j) \to p_\infty \in X_\infty$.  

Gromov's Compactness Theorem states that a sequence of manifolds with nonnegative Ricci (or Sectional) Curvature, and a uniform upper bound on diameter, has a subsequence which converges in the Gromov-Hausdorff sense to 
a geodesic metric space \cite{Gromov-metric}. If a sequence of manifolds has nonnegative sectional curvature, then they satisfy the Toponogov Triangle Comparison Theorem.  Taking the limits of the points in the triangles, one sees that the Gromov-Hausdorff limit of the sequence also satisfies the triangle comparison.  Thus the limit spaces are Alexandrov spaces with nonnegative Alexandrov curvature (cf. \cite{BBI}).   

\subsection{Review of Metric Measure Convergence} \label{sect-mm-back}

Fukaya introduced the notion of metric measure convergence of metric measure spaces $(X_j, d_j, \mu_j)$ in \cite{Fukaya-1987}.  He assumed the sequence converged in the Gromov-Hausdorff sense as in (\ref{common-Z-GH}) and then required that the push forwards of the measures converge as well,
\be
\varphi_{j*}\mu_j \to \varphi_{\infty*}\mu_\infty  \textrm{ weakly as measures in } Z.
\ee 
Cheeger--Colding proved metric measure convergence of noncollapsing sequences of manifolds with Ricci uniformly bounded below in \cite{ChCo-PartI} where the measure on the limit is the Hausdorff measure.  They proved metric measure convergence by constructing almost isometries and showing the Hausdorff measures of balls about converging points converge:
\be
\textrm{ If } p_j \to p_\infty \textrm{ then } \mathcal{H}^m(B(p_j,r)) \to \mathcal{H}^m(B(p_\infty,r)).
\ee
  They also studied collapsing sequences obtaining metric measure convergence to other measures on the limit space.  Cheeger and Colding applied this metric measure convergence to prove that limits of manifolds with nonnegative Ricci curvature have generalized nonnegative Ricci curvature.  In particular they prove the limits satisfy the Bishop-Gromov Volume Comparison Theorem and the Cheeger-Gromoll Splitting Theorem.  

Sturm, Lott and Villani then developed the CD(k,n) notion of generalized Ricci curvature on metric measure spaces in \cite{Sturm-curv}\cite{Lott-Villani-09}.   In \cite{Sturm-06}, Sturm extended the study of metric measure convergence beyond the consideration of sequences of manifolds which already converge in the Gromov-Hausdorff sense, using the Wasserstein distance.  This is also explored in Villani's text \cite{Villani-text}.   CD(k,n) spaces converge in this sense to CD(k,n) spaces.  RCD(k,n) spaces developed by Ambrosio-Gigli-Savare are also preserved under this convergence \cite{AGS}.  RCD(k,n) spaces are CD(k,n) spaces which also require that the tangent cones almost everywhere are Hilbertian.  There has been significant work studying both of these classes of spaces proving they satisfy many of the properties of Riemannian manifolds with lower bounds on their Ricci curvature.

\vspace{.4cm}
\subsection{Review of Integral Current Spaces} \label{sect-C-back}

The Intrinsic Flat Distance is defined and studied in \cite{SW-JDG} by applying sophisticated ideas of Ambrosio-Kirchheim \cite{AK}
extending earlier work of Federer-Fleming \cite{FF}.   Limits of Riemannian
manifolds under intrinsic flat convergence are integral current spaces, a notion
 introduced by
the third author and Stefan Wenger in \cite{SW-JDG}.  

Recall that Federer-Flemming first defined the
notion of an integral current as an extension of the notion of a submanifold of Euclidean space
\cite{FF}.  That is a submanifold $\psi: M^m \to \mathbb{E}^N$ can be viewed
as a current $T=\psi_{\#}\lbrack M \rbrack$ acting on $m$-forms as follows:
\be
T(\omega)= \psi_{\#}\lbrack M \rbrack(\omega) 
= \lbrack M \rbrack (\psi^*\omega) =\int_M \psi^*\omega.
\ee
If $\omega= f\,d\pi_1 \wedge \cdots \wedge d\pi_m$ then
\be
T(\omega)=\psi_{\#}\lbrack M \rbrack(\omega) =
\int_M f\circ \psi\,d(\pi_1\circ \psi) \wedge \cdots \wedge d(\pi_m\circ \psi).
\ee
They define boundaries of currents as $\partial T(\omega) = T (d\omega)$
so that then the boundary of a submanifold with boundary is exactly what it should be.  They define integer rectifiable currents more generally as countable sums of
images under Lipschitz maps of Borel sets.  The integral currents are
integer rectifiable currents whose boundaries are integer rectifiable.  

Ambrosio-Kirchheim extended the notion of integral currents to arbitrary
complete metric space \cite{AK}.  As there are no forms on metric spaces,
they use deGeorgi's tuples of Lipschitz functions,
\be
T(f, \pi_1,..., \pi_m)=\psi_{\#}\lbrack M \rbrack(f, \pi_1,..., \pi_m)=
\int_M f\circ \psi\,d(\pi_1\circ \psi) \wedge \cdots \wedge d(\pi_m\circ \psi).
\ee
This integral is well defined because Lipschitz functions are differentiable almost everywhere.   They define boundary as follows:
\be
\partial T(f, \pi_1,..., \pi_m)= T(1,f, \pi_1,..., \pi_m)
\ee 
which matches with 
\be
d(f\,d\pi_1 \wedge \cdots \wedge d\pi_m)
=1\,df \wedge d\pi_1 \wedge \cdots \wedge d\pi_m.  
\ee 
They also define integer rectifiable currents more generally as countable sums of
images under Lipschitz maps of Borel sets.  The integral currents are
integer rectifiable currents whose boundaries are integer rectifiable. 

The notion of an integral current space was introduced in \cite{SW-JDG}.

\begin{defn} \label{defn-int-curr-space}
An $m$ dimensional integral current space,
$\left(X,d, T\right)$, is a metric space, $(X,d)$ with an
integral current structure $T \in \intcurr_m\left(\bar{X}\right)$
where $\bar{X}$ is the metric completion of $X$
and $\set(T)=X$.   
Given an 
integral current 
space $M=\left(X,d,T\right)$ we will use
$\set\left(M\right)$ or $X_M$ to denote $X$,  $d_M=d$ and $\Lbrack M \Rbrack =T $. 
Note that $\set\left(\partial T\right) \subset \bar{X}$.   
The boundary of $\left(X,d,T\right)$ is then the integral current space:
\be
\partial \left(X,d_X,T\right) := \left(\set\left(\partial T\right), d_{\bar{X}}, \partial T\right).
\ee
If $\partial T=0$ then we say $\left(X,d,T\right)$ is an integral current without boundary.
\end{defn}

A compact oriented Riemannian manifold with boundary, $M^m$,
is an integral current space, where $X=M^m$, $d$ is the standard
metric on $M$ and $T$ is integration over $M$.  In this
case $\mass(M)=\vol(M)$ and $\partial M$ is the boundary manifold.
When $M$ has no boundary, $\partial M=0$.

Ambrosio-Kirchheim defined the mass $\mass(T)$ and the mass measure
$||T||$ of a current in \cite{AK}.  We apply the same notions to define a mass for an
integral current space.   Applying their theorems we have
\be \label{mass-current-space}
\mass(M)=\mass(T)=\int_X \theta_T(x)\lambda(x) d\mathcal{H}^m(x)
\ee
where $\lambda(x)$ is the area factor and $\theta_T$ is the weight.    
In particular $\lambda(x)=1$ when the
the tangent cone at $x$ is Euclidean which is true on a Riemannian manifold
where the weight is also $1$.  This is true almost everywhere in the examples
in this paper as well. The mass measure, $||T||$, is a measure on $X$ and satisfies
\be \label{mass-measure}
||T||(A)=\int_A \theta_T(x)\lambda(x) d\mathcal{H}^m(x).
\ee

%Rectifiability and other properties.

% Review of IFC
\vspace{.4cm}
\subsection{Review of the Intrinsic Flat distance}\label{sect-SWIF-back}

The Intrinsic Flat distance was defined in work of the third author and
Stefan Wenger \cite{SW-JDG} as a new distance between Riemannian
manifolds based upon the Federer-Flemming flat distance \cite{FF} and
the Gromov-Hausdorff distance \cite{Gromov-metric}.

Recall that the Federer-Flemming flat distance between $m$ dimensional integral currents 
$S,T\in\intcurr_m\left(Z\right)$ is given by 
\begin{equation} \label{eqn-Federer-Flat}
d^Z_{F}\left(S,T\right):= 
\inf\{\mass\left(U\right)+\mass\left(V\right):
S-T=U+\bdry V \}
\end{equation}
where $U\in\intcurr_m\left(Z\right)$ and $V\in\intcurr_{m+1}\left(Z\right)$.

In \cite{SW-JDG}, the third author and Wenger imitate Gromov's definition of the Gromov-Hausdorff distance (which he called the intrinsic Hausdorff distance) by replaced the Hausdorff distance by the Flat distance:

\begin{defn}(\cite{SW-JDG}) \label{def-flat1} 
 For $M_1=\left(X_1,d_1,T_1\right)$ and $M_2=\left(X_2,d_2,T_2\right)\in\mathcal M^m$ let the 
intrinsic flat distance  be defined:
 \begin{equation}\label{equation:def-abstract-flat-distance}
  d_{\Fm}\left(M_1,M_2\right):=
 \inf d_F^Z
\left(\varphi_{1\#} T_1, \varphi_{2\#} T_2 \right),
 \end{equation} 
where the infimum is taken over all complete metric spaces 
$\left(Z,d\right)$ and distance preserving maps 
$\varphi_1 : \left(\bar{X}_1,d_1\right)\to \left(Z,d\right)$ and $\varphi_2: \left(\bar{X}_2,d_2\right)\to \left(Z,d\right)$
and the flat norm $d_F^Z$ is taken in $Z$.
Here $\bar{X}_i$ denotes the metric completion of $X_i$ and $d_i$ is the extension
of $d_i$ on $\bar{X}_i$, while $\varphi_\# T$ denotes the push forward of $T$.
\end{defn}

They then prove that this distance is 0 iff the spaces are isometric with a current preserving isometry.  They say
\be
M_j \Fto M_\infty \textrm{ iff } d_{\mathcal{F}}(M_j, M_\infty) \to 0.
\ee
And prove that this happens iff there is a complete metric space $Z$ and
distance preserving maps $\varphi_j: M_j \to Z$ such that
\be 
d_F^Z(\varphi_{j\#}T_j,\varphi_{\infty\#}T_\infty) \to 0
\ee
Note that in contrast to Gromov's embedding theorem as stated in
(\ref{common-Z-GH}), the $Z$ here is only complete and not compact.

There is a special integral current space called the zero space, 
\be\label{zero-space}
{\bf{0}}=(\emptyset, 0,0).  
\ee
Following the definition above, $M_j \Fto {\bf{0}}$ iff 
$d_{\mathcal{F}}(M_j, {\bf{0}}) \to 0$ which implies 
there is a complete metric space $Z$ and
distance preserving maps $\varphi_j: M_j \to Z$ such that
\be 
d_F^Z(\varphi_{j\#}T_j, 0) \to 0
\ee
Note that in this case the manifolds disappear and points have no limits.

Combining Gromov's Embedding Theorem with Ambrosio-Kitrchheim's Compactness Theorem one has:

\begin{thm}[\cite{SW-JDG}] \label{GH-to-flat}
Given a sequence of $m$ dimensional integral current spaces $M_j=\left(X_j, d_j, T_j\right)$ such that $X_j$ are equibounded and
equicompact and with uniform upper bounds on mass and boundary mass.
A subsequence converges in the
Gromov-Hausdorff sense $\left(X_{j_i}, d_{j_i}\right) \GHto \left(Y,d_Y\right)$ and in the 
intrinsic flat sense 
$\left(X_{j_i}, d_{j_i}, T_{j_i}\right) \Fto \left(X,d,T\right)$
where either $\left(X,d,T\right)$ is an $m$ dimensional integral current space
with $X \subset Y$
or it is the ${\bf 0}$ current space.
\end{thm}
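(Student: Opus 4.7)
The plan is to combine Gromov's Compactness Theorem for compact metric spaces with Ambrosio--Kirchheim's compactness theorem for integral currents in a compact ambient metric space; the key observation is that once Gromov--Hausdorff convergence places all $X_{j_i}$ inside one compact metric space $Z$, the intrinsic flat distance reduces to the ordinary flat distance in $Z$. First I would apply Gromov's Compactness Theorem: since $\{X_j\}$ is equibounded and equicompact, a subsequence $(X_{j_i}, d_{j_i})$ converges in the Gromov--Hausdorff sense to a compact metric space $(Y, d_Y)$. Invoking the embedding statement recalled at (\ref{common-Z-GH}), I realize this convergence inside a compact metric space $Z$, so that there are distance preserving maps $\varphi_i \colon \bar X_{j_i} \to Z$ and $\varphi_\infty \colon Y \to Z$ with $d_H^Z(\varphi_i(X_{j_i}), \varphi_\infty(Y)) \to 0$.

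Next I would push the currents forward by setting $S_i := \varphi_{i\#} T_{j_i} \in \intcurr_m(Z)$. Because each $\varphi_i$ is distance preserving, pushforward preserves mass and commutes with $\partial$, so both $\mass(S_i)$ and $\mass(\partial S_i)$ remain uniformly bounded by hypothesis, and $\spt(S_i)$ lies in $T_{\epsilon_i}(\varphi_\infty(Y)) \subset Z$ with $\epsilon_i \to 0$. Applying Ambrosio--Kirchheim's compactness theorem for integral currents in the compact space $Z$, I pass to a further subsequence (still denoted $S_i$) converging in flat norm to some $S \in \intcurr_m(Z)$.

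Now I would dispose of the dichotomy. If $S = 0$, Definition~\ref{def-flat1} gives
\be
d_\Fm(M_{j_i}, \mathbf{0}) \;\le\; d_F^Z(S_i, 0) \;\to\; 0,
\ee
so the intrinsic flat limit is the zero current space. Otherwise, identifying $Y$ with $\varphi_\infty(Y) \subset Z$, I set $X := \set(S)$, equip it with $d := d_Y|_{X \times X}$, and take $T := S$ as the current structure. Then $(X, d, T)$ is an $m$-dimensional integral current space, and Definition~\ref{def-flat1} once more produces
\be
d_\Fm(M_{j_i}, (X, d, T)) \;\le\; d_F^Z(S_i, S) \;\to\; 0,
\ee
which is the asserted intrinsic flat convergence.

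The hard step will be verifying $\spt(S) \subset \varphi_\infty(Y)$, so that $X \subset Y$ is really guaranteed. I would prove this by observing that flat convergence in a compact metric space implies weak convergence of the mass measures $\|S_i\| \weaklyto \|S\|$; since each $\|S_i\|$ is concentrated in $T_{\epsilon_i}(\varphi_\infty(Y))$ and these neighborhoods shrink to the closed set $\varphi_\infty(Y)$, testing $\|S\|$ against continuous functions supported off $\varphi_\infty(Y)$ forces $\spt(\|S\|) \subset \varphi_\infty(Y)$, and hence $\set(S) \subset \varphi_\infty(Y)$. A secondary subtlety worth noting is that $\set(S)$ can be a proper subset of $Y$, which is precisely why the theorem only asserts $X \subset Y$ and admits the ${\bf 0}$ alternative rather than identifying the flat limit with the Gromov--Hausdorff limit.
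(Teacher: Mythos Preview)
The paper does not actually prove this theorem: it is stated in the background section with the citation \cite{SW-JDG} and only the one-line indication ``Combining Gromov's Embedding Theorem with Ambrosio--Kirchheim's Compactness Theorem one has'' as to where it comes from. So there is no proof in the paper to compare against; your outline is precisely the argument that sentence is pointing to, and it is the standard one.

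That said, one step in your write-up is not correct as stated. You claim that ``flat convergence in a compact metric space implies weak convergence of the mass measures $\|S_i\| \weaklyto \|S\|$.'' This is false in general: mass is only lower semicontinuous under flat convergence (think of two oppositely oriented nearby copies of a current cancelling in the limit). What you actually get from Ambrosio--Kirchheim compactness is weak convergence of the \emph{currents} $S_i \to S$ (pointwise on Lipschitz tuples), and that is enough for the support containment: if $z \notin \varphi_\infty(Y)$, choose $r>0$ with $\bar B(z,r)\cap \varphi_\infty(Y)=\emptyset$; for $i$ large, $\spt S_i \subset T_{\epsilon_i}(\varphi_\infty(Y))$ misses $B(z,r/2)$, so $S_i(f,\pi)=0$ for any tuple with $\spt f \subset B(z,r/2)$, and by weak convergence $S(f,\pi)=0$ as well. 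Hence $\spt S \subset \varphi_\infty(Y)$. With that correction your argument goes through.
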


Note that in \cite{SW-CVPDE}, the third author and Wenger prove if the $M_j$  have nonnegative Ricci curvature then in fact the intrinsic flat and Gromov-Hausdorff limits agree.  Matveev and Portegies have extended this to more
general lower bounds on Ricci curvature in \cite{Matveev-Portegies}.  With only lower bounds on scalar curvature the limits need not agree as seen in the
Appendix of \cite{SW-JDG}.  There are also sequences of manifolds with
nonnegative scalar curvatue that have no Gromov-Hausdorff limit but do converge in the intrinsic flat sense (cf. Ilmanen's Example presented in \cite{SW-JDG} and also \cite{Lakzian-Sormani}).

In \cite{Wenger-compactness}, Wenger proved that any sequence of Riemannian
manifolds with a uniform upper bound on diameter, volume and boundary volume has a subsequence which converges in the intrinsic flat sense to an integral current space (cf. \cite{SW-JDG}).  It is possible that the limit space is just the $\bf{0}$ space
which happens for example when the volumes of the manifolds converge to $0$. 

Note that when $M_j \Fto M_\infty$ the masses are lower semicontinuous:
\be \label{mass-semicont}
\liminf_{j\to\infty} \mass(M_j) \ge \mass(M_\infty)
\ee
where the mass of an integral current space is just the mass of the integral
current structure.  The mass is just the volume when $M$ is a Riemannian
manifold and can be computed using (\ref{mass-current-space}) otherwise.
As there is not equality here, intrinsic flat convergence does not imply metric measure convergence. 

In \cite{Portegies-F-evalue}, Portegies has proven that when a sequence converges in the intrinsic flat sense and in addition $\mass(M_j)$ is assumed to converge
to $\mass(M_\infty)$, then the spaces do converge in the metric measure sense,
where the measures are taken to be the mass measures.

\subsection{Useful Lemmas and Theorems concerning Intrinsic Flat convergence}

The following lemmas, definitions and theorems appear in work of the third author
\cite{Sormani-AA}, although a few (labelled only as c.f. \cite{Sormani-AA}) were used
within proofs in older work of the third author with Wenger \cite{SW-CVPDE}.   All are
proven rigorously in \cite{Sormani-AA}.

\begin{lem}(c.f. \cite{Sormani-AA})\label{lem-ball}
A ball in an integral current space, $M=\left(X,d,T\right)$,
with the current restricted from the current structure of the Riemannian manifold is an integral current space itself, 
\be
S\left(p,r\right)=\left(\set(T\rstr B(p,r)),d,T\rstr B\left(p,r\right)\right)
\ee
for almost every $r > 0$.   Furthermore,
\be\label{ball-in-ball}
B(p,r) \subset \set(S(p,r))\subset \bar{B}(p,r)\subset X.
\ee
\end{lem}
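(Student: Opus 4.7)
The plan is to build $S(p,r)$ by restricting the current $T$ to the open ball and applying the Ambrosio-Kirchheim slicing theorem to the $1$-Lipschitz function $f: \bar{X} \to \R$ defined by $f(x) = d(p,x)$. First, recall from \cite{AK} that the restriction $T \rstr B(p,r)$ of an integer rectifiable current to any Borel set is again integer rectifiable and lives in $\intcurr_m(\bar{X})$. So the only real content is to check the following three facts for almost every $r$: (i) $\bdry(T \rstr B(p,r))$ has finite mass and is integer rectifiable, so that the triple is a genuine integral current space; (ii) the set of the restricted current lies in $\bar{B}(p,r)$; and (iii) the set contains the open ball $B(p,r)$.

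For (i), I would invoke the Ambrosio-Kirchheim slicing identity
\[
\bdry\bigl(T \rstr \{f < r\}\bigr) \;=\; (\bdry T) \rstr \{f < r\} \;-\; \langle T, f, r\rangle,
\]
where the slice $\langle T, f, r\rangle$ is an $(m-1)$-dimensional integer rectifiable current, and is integral for a.e.\ $r$. Since $T$ is integral, $\bdry T$ has finite mass, so $(\bdry T)\rstr \{f<r\}$ has finite mass for every $r$. The slice mass bound
\[
\int_0^\infty \mass\bigl(\langle T,f,r\rangle\bigr)\, dr \;\le\; \Lip(f)\,\mass(T) \;=\; \mass(T)
\]
forces $\mass(\langle T,f,r\rangle) < \infty$ for a.e.\ $r$. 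Combining these, $\bdry(T \rstr B(p,r))$ has finite mass and is integer rectifiable for a.e.\ $r$, so $T\rstr B(p,r)$ is integral as required.

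For (ii), the mass measure $\|T \rstr B(p,r)\|$ is supported in the closure (in $\bar X$) of $B(p,r)$, which is $\bar{B}(p,r)$. Since $\set(T \rstr B(p,r))$ is by definition the set of points of positive lower density of this mass measure, it is contained in $\bar{B}(p,r)$. For (iii), let $x \in B(p,r)$ and pick $\rho > 0$ with $\bar{B}(x,\rho) \subset B(p,r)$. Then for every $s \le \rho$,
\[
\|T \rstr B(p,r)\|\bigl(B(x,s)\bigr) \;=\; \|T\|\bigl(B(x,s)\bigr),
\]
so the lower density of $\|T\rstr B(p,r)\|$ at $x$ equals the lower density of $\|T\|$ at $x$, which is positive because $x \in X = \set(T)$. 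Hence $x \in \set(T \rstr B(p,r))$.

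The only technical obstacle is making sure all the almost-everywhere conditions hold simultaneously; but there are only two such conditions (finiteness of the slice mass, and integer rectifiability of the slice), both of which hold off a set of measure zero, and their union still has measure zero. With these in place the three items assemble immediately into the two displayed conclusions of the lemma.
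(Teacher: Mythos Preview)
Your argument is correct and is exactly the standard route via Ambrosio--Kirchheim slicing. Note, however, that the paper does \emph{not} supply its own proof of this lemma: it is stated in the background section with the attribution ``(c.f.\ \cite{Sormani-AA})'' and the surrounding text explicitly says that all such statements ``are proven rigorously in \cite{Sormani-AA}.'' So there is no in-paper proof to compare against; your write-up simply fills in what the paper outsources.

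One small polish on part (ii): you assert that $\set(T\rstr B(p,r))\subset \bar B(p,r)$, identifying the latter with the closure of $B(p,r)$ in $\bar X$. To match the displayed chain $\set(S(p,r))\subset \bar B(p,r)\subset X$ in the lemma, you should also observe that $\set(T\rstr B(p,r))\subset X$. This is immediate from $\|T\rstr B(p,r)\|\le \|T\|$: any point of positive lower $\|T\rstr B(p,r)\|$-density also has positive lower $\|T\|$-density, hence lies in $\set(T)=X$. With that remark the inclusion lands in the closed ball taken inside $X$, as stated.
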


\begin{lem} (c.f. \cite{Sormani-AA})
When $M$ is a Riemannian manifold with boundary
\be
S\left(p,r\right)=\left(\bar{B}\left(p,r\right),d,T\rstr B\left(p,r\right)\right)
\ee
is an integral current space for all $r > 0$.
\end{lem}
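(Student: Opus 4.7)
The plan is to upgrade Lemma~\ref{lem-ball} from ``almost every $r$'' to ``every $r > 0$'' by exploiting the smooth Riemannian structure of $M$. Since $M$ is a Riemannian manifold with boundary, $T$ is integration over $M$ with $\partial T$ given by integration over $\partial M$, so $T \in \intcurr_m(\bar{M})$, and for each $r > 0$ the restriction $T\rstr B(p,r)$ is integration over $B(p,r)$ with mass $\vol(B(p,r))$, hence automatically integer rectifiable.

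The main step is to verify that $\partial(T\rstr B(p,r))$ is integer rectifiable for every $r$, not merely almost every $r$. By the Ambrosio-Kirchheim slicing formula,
\[
\partial(T\rstr B(p,r)) = (\partial T)\rstr B(p,r) - \langle T, d_p, r\rangle.
\]
The first term is integration over the Borel set $\partial M \cap B(p,r)$, which lies in the smooth hypersurface $\partial M$ and is integer rectifiable. For the slice $\langle T, d_p, r\rangle$, I would use the fact that on a Riemannian manifold the distance function $d_p$ is $1$-Lipschitz and smooth off the cut locus of $p$, a closed set of Hausdorff dimension at most $m-1$ whose intersection with any distance sphere is $\mathcal{H}^{m-1}$-rectifiable. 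This realizes the slice explicitly, for every $r > 0$, as integration with unit multiplicity over the rectifiable hypersurface $\{d_p = r\}$, giving integer rectifiability.

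To identify the underlying set as exactly $\bar{B}(p,r)$, I would compute the lower $m$-density of the weight of $T\rstr B(p,r)$: interior points of $B(p,r)$ have density $1$, points on the distance sphere lying in the interior of $M$ have density $\ge 1/2$ via the standard half-space tangent cone, and points on $\partial B(p,r) \cap \partial M$ have positive density coming from the quarter-space blowup, while points outside $\bar{B}(p,r)$ have density $0$. Hence $\set(T\rstr B(p,r)) = \bar{B}(p,r)$, and $S(p,r)$ satisfies Definition~\ref{defn-int-curr-space} for every $r > 0$. The main obstacle is the slice at ``bad'' values of $r$: standard slicing theory only yields integer rectifiability of $\langle T, d_p, r\rangle$ for almost every $r$, so the argument must invoke Riemannian-specific regularity results for $d_p$ and the cut locus in order to handle every $r$ uniformly.
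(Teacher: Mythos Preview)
The paper does not prove this lemma. It is listed in Section~\ref{sect-SWIF-back} among background results and is explicitly attributed to \cite{Sormani-AA}; the surrounding paragraph states that ``all are proven rigorously in \cite{Sormani-AA}.'' There is therefore no proof in this paper to compare your proposal against.

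That said, a brief comment on your argument itself. Your outline is reasonable, but the step that carries all the weight is the claim that the slice $\langle T, d_p, r\rangle$ is integer rectifiable for \emph{every} $r$, and you are right to flag this as the obstacle. Rather than going through regularity of the cut locus (which is delicate and not obviously strong enough in the form you state), a cleaner route is to observe that the geodesic sphere $\{d_p=r\}$ is, for every $r>0$, the image under the smooth map $\exp_p$ of a closed subset of the Euclidean sphere of radius $r$ in $T_pM$ (namely the vectors whose geodesics are minimizing up to time $1$). This makes $\{d_p=r\}$ an $(m-1)$-rectifiable set of finite $\mathcal{H}^{m-1}$ measure for every $r$, so $B(p,r)$ is a set of finite perimeter and $\partial(T\rstr B(p,r))$ is integer rectifiable by the standard structure theorem. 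This bypasses the slicing formula entirely and avoids any appeal to fine properties of the cut locus. Your density argument identifying $\set(T\rstr B(p,r))$ with $\bar{B}(p,r)$ is essentially correct: positivity of the lower density at a sphere point $q$ follows because a short segment of a minimizing geodesic from $p$ to $q$, together with a thin tube around it, lies in $B(p,r)\cap B(q,\epsilon)$.
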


\begin{defn} \label{point-conv}    (c.f. \cite{Sormani-AA})
If $M_i=(X_i, d_i,T_i) \Fto M_\infty=(X_\infty, d_\infty,T_\infty)$, 
then we say $x_i\in X_i$ are a converging sequence that converge to
$x_\infty\in \bar{X}_\infty$ if there exists a complete metric space
$Z$ and distance preserving maps 
$\varphi_i:X_i\to Z$ such that 
\be
\varphi_{i\#} T_i \Fto \varphi_{\infty\#}T_\infty \textrm{ and }
\varphi_i(x_i) \to \varphi_\infty(x_\infty).
\ee   If we say collection of
points, $\{p_{1,i}, p_{2,i},...p_{k,i}\}$,
converges to a corresponding collection of points, 
$\{p_{1,\infty}, p_{2,\infty},...p_{k,\infty}\}$, if 
$\varphi_{i}(p_{j,i}) \to \varphi_\infty(p_{j, \infty})$ for $j=1,\ldots,k$.
\end{defn}

\begin{defn} (c.f. \cite{Sormani-AA})\label{point-Cauchy}
If $M_i=(X_i, d_i,T_i) \Fto M_\infty=(X_\infty, d_\infty,T_\infty)$, then we say $x_i\in X_i$ 
are Cauchy if there exists a complete metric space
$Z$ and distance preserving maps 
$\varphi_i:M_i\to Z$ such that 
\be
\varphi_{i\#} T_i \Fto \varphi_{\infty\#}T_\infty \textrm{ and }
\varphi_i(x_i) \to z_\infty \in Z.
\ee   We say the
sequence is disappearing if $z_\infty \notin \varphi_\infty(X_\infty)$.
We say the sequence has no limit in $\bar{X}_\infty$ if
$z_\infty \notin \varphi_\infty(\bar{X}_\infty)$.
\end{defn}

\begin{lem}(c.f. \cite{Sormani-AA}) \label{to-a-limit}
If a sequence of integral current spaces, $M_i=\left(X_i,d_i,T_i\right)\in \mathcal{M}_0^m$, 
converges to 
an integral current space, $M=\left(X,d,T\right)\in \mathcal{M}_0^m$, in the intrinsic flat sense, then every point $x$ in the limit space
$X$ is the limit of points $x_i\in M_i$.  
In fact there exists a sequence of maps $F_i: X \to X_i$
such that $x_i=F_i(x)$ converges to $x$ and
\be
\lim_{i\to \infty} d_i(F_i(x), F_i(y))= d(x,y).
\ee
\end{lem}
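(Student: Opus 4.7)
The plan is to unpack the definition of intrinsic flat convergence to produce a common ambient metric space $Z$ in which the pushed-forward currents converge in flat norm, and then to use the defining property $X=\set(T)$ (positive lower $m$-density at every point of $X$) to force each $\varphi_\infty(x)$ to be approximable by points of $\varphi_i(X_i)$. From this approximation the maps $F_i$ will be constructed by a simple selection, and the convergence of distances will follow from the distance-preserving property of the $\varphi_i$.

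By Definition~\ref{def-flat1} together with the characterization of intrinsic flat convergence stated just after it, I may fix a complete metric space $(Z,d)$ and distance-preserving maps $\varphi_i\colon \bar{X}_i\to Z$ and $\varphi_\infty\colon \bar{X}\to Z$ such that $d^Z_F(\varphi_{i\#}T_i,\varphi_{\infty\#}T)\to 0$. The heart of the proof is the claim that for every $x\in X$, the image $\varphi_\infty(x)$ lies in the closure of $\bigcup_i\varphi_i(X_i)$. To prove this by contradiction, I would pass to a subsequence $i_k$ and find $r_0>0$ with $\varphi_{i_k}(X_{i_k})\cap B_Z(\varphi_\infty(x),r_0)=\emptyset$, so that $\|\varphi_{i_k\#}T_{i_k}\|(B_Z(\varphi_\infty(x),r))=0$ for every $r<r_0$. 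Applying Ambrosio-Kirchheim's slicing theorem to the $1$-Lipschitz function $z\mapsto d(z,\varphi_\infty(x))$, for almost every $r\in(0,r_0)$ one may restrict every current to the open ball $B_Z(\varphi_\infty(x),r)$ while preserving flat convergence up to a controllable slice error; the lower semicontinuity of mass under flat convergence then yields $\|T\|(B(x,r))=0$ for a.e.\ $r<r_0$. This contradicts the fact that every point of $\set(T)=X$ has positive lower $m$-density with respect to $\|T\|$.

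Granting the approximation property, for each $x\in X$ I define $F_i(x)\in X_i$ to be a point whose $\varphi_i$-image realizes $\inf_{y\in X_i}d(\varphi_i(y),\varphi_\infty(x))$ up to an additive error of $1/i$ (chosen arbitrarily when $X_i$ is empty). By the previous step this infimum tends to zero, so $F_i(x)\to x$ in the sense of Definition~\ref{point-conv}. The distance-preserving property of $\varphi_i$ together with the triangle inequality in $Z$ then gives
\begin{equation}
\bigl|d_i(F_i(x),F_i(y))-d(x,y)\bigr|\le d(\varphi_i(F_i(x)),\varphi_\infty(x))+d(\varphi_i(F_i(y)),\varphi_\infty(y))\longrightarrow 0,
\end{equation}
establishing the distance limit.

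The main technical obstacle is the localization step in the contradiction argument: while lower semicontinuity of the total mass under flat convergence is standard for Ambrosio-Kirchheim currents, restriction to a ball is not an algebraically clean operation on metric currents and must be implemented via slicing, with the slicing radius $r$ chosen in a set of full measure so that the boundary slice has controlled flat norm. Once this point is settled, the rest of the argument is a routine selection combined with the triangle inequality.
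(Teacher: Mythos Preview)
The paper does not give its own proof of this lemma; it is quoted in the background section with a citation to \cite{Sormani-AA}, where the rigorous proof is given. So there is no in-paper argument to compare against.

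Evaluating your argument on its own merits: the overall strategy (common ambient $Z$, positive lower density forcing approximation, selection of near-closest points, triangle inequality for distances) is the standard one and is correct. One point of wording deserves tightening. The claim you single out as the ``heart of the proof''---that $\varphi_\infty(x)$ lies in the closure of $\bigcup_i\varphi_i(X_i)$---is strictly weaker than what you need and what your contradiction argument actually establishes. Being in the closure of the union only gives approximation by points from \emph{some} of the $X_i$, whereas the construction of $F_i$ requires $d_Z\bigl(\varphi_\infty(x),\varphi_i(X_i)\bigr)\to 0$ as $i\to\infty$. Your negation (a subsequence $i_k$ and a fixed $r_0$ with the ball disjoint from $\varphi_{i_k}(X_{i_k})$) is precisely the negation of this stronger statement, and the slicing plus lower-semicontinuity step then yields the contradiction along that subsequence. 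So the logic is sound; just rephrase the displayed claim to match what you prove. The localization via slicing is indeed the correct tool here, and your description of how to control the slice term is accurate.
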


\begin{lem}(c.f. \cite{Sormani-AA})\label{balls-converge}
If $M_j \Fto M_\infty$ and $p_j \to p_\infty\in \bar{X}_\infty$, then for almost every
$r_\infty>0$ there exists a subsequence of $M_j$ also denoted
$M_j$  such that 
\be
S(p_j,r_\infty)= \left(\bar{B}\left(p_j,r_\infty\right),d_j,T_j\rstr B\left(p_j,r_\infty\right)\right)
\ee
are integral current spaces for $j\in \{1,2,...,\infty\}$ and we have
\be
S(p_j,r_\infty) \Fto S(p_\infty,r_\infty).
\ee
If $p_j$ are Cauchy with no limit in $\bar{X}_\infty$
then there exists $\delta>0$ such that
for almost every $r \in (0,\delta)$ such that
$S(p_j,r)$
are integral current spaces for $j\in \{1,2,...\}$ and we have
\be\label{to-0}
S(p_j,r) \Fto 0.
\ee
If $M_j \Fto \bf{0}$ then for almost every $r$ and for all sequences $p_j$
 we have (\ref{to-0}).
\end{lem}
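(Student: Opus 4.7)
The plan is to embed all spaces into a single complete metric space $Z$ via distance preserving maps and then apply Ambrosio--Kirchheim slicing to a $1$-Lipschitz distance function centered at the limit point. By Definition~\ref{point-conv} choose $\varphi_j\colon \bar{X}_j \to Z$ with $\varphi_{j\#}T_j \Fto \varphi_{\infty\#}T_\infty$ in $Z$ and $\epsilon_j := d_Z(\varphi_j(p_j), \varphi_\infty(p_\infty)) \to 0$. Select decompositions $\varphi_{j\#}T_j - \varphi_{\infty\#}T_\infty = A_j + \partial B_j$ with $\mu_j := \mass(A_j) + \mass(B_j) \to 0$, and set $f(z) := d_Z(z, \varphi_\infty(p_\infty))$, which is $1$-Lipschitz.

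For almost every $r > 0$ the slicing theorem of \cite{AK} guarantees that every current in sight admits an integral restriction to $\{f < r\}$, and restriction of the flat decomposition yields
\[
\varphi_{j\#}T_j \rstr \{f<r\} - \varphi_{\infty\#}T_\infty \rstr \{f<r\} \;=\; A_j \rstr \{f<r\} + \partial(B_j \rstr \{f<r\}) - \langle B_j, f, r\rangle.
\]
The first two summands contribute to the flat norm at most $\mu_j$. The slice term obeys the Ambrosio--Kirchheim inequality $\int_0^\infty \mass\langle B_j, f, r\rangle\, dr \le \mass(B_j) \to 0$, so by Fatou I can extract, for almost every $r$ individually, a subsequence along which $\mass\langle B_j, f, r\rangle \to 0$. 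This gives flat convergence $\varphi_{j\#}T_j \rstr \{f<r\} \Fto \varphi_{\infty\#}T_\infty \rstr \{f<r\}$ in $Z$.

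The ball I actually care about is centered at $\varphi_j(p_j)$ rather than $\varphi_\infty(p_\infty)$, but since $\varphi_j$ is distance preserving, $\varphi_j^{-1}(B_Z(\varphi_j(p_j),r)) = B(p_j,r)$, and the symmetric difference of $B_Z(\varphi_j(p_j),r)$ and $B_Z(\varphi_\infty(p_\infty),r)$ lies in the annulus $\{|f-r|<\epsilon_j\}$. Fubini gives $\int_0^{R} \mass(\varphi_{j\#}T_j \rstr \{|f-r|<\epsilon_j\})\, dr \le 2\epsilon_j \mass(\varphi_{j\#}T_j \rstr \{f<R+1\})$, and along the subsequence already constructed the restriction masses are eventually bounded by $\mass(\varphi_{\infty\#}T_\infty \rstr \{f<R+1\})+1$ via the decomposition above. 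Passing to a further subsequence and invoking Fatou once more, the annular mass tends to zero for almost every $r$; combining with the first step and pulling back through $\varphi_j$ produces $S(p_j,r)\Fto S(p_\infty,r)$, while Lemma~\ref{lem-ball} supplies the integral current space structure at such $r$.

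The two remaining assertions use the same machinery with only cosmetic modifications. When $p_j$ is Cauchy with limit $z_\infty \notin \varphi_\infty(\bar{X}_\infty)$, set $\delta := d_Z(z_\infty, \varphi_\infty(\bar{X}_\infty)) > 0$ and replace $f$ with $\tilde f(z) := d_Z(z,z_\infty)$; for $r<\delta$ one has $\varphi_{\infty\#}T_\infty \rstr \{\tilde f<r\} = 0$, so the argument of the second paragraph gives $S(p_j,r)\Fto \mathbf{0}$ for almost every $r\in(0,\delta)$ along a subsequence. When $M_\infty = \mathbf{0}$, the decomposition reduces to $\varphi_{j\#}T_j = A_j + \partial B_j$ with $\mu_j \to 0$, and the same slicing argument about any accumulation point of $\varphi_j(p_j)$---or about an arbitrary base point together with an exhaustion of $Z$ when no accumulation point exists---yields $S(p_j,r) \Fto \mathbf{0}$ for almost every $r$. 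The principal technical obstacle throughout is the simultaneous control, on a full measure set of radii, of the three distinct error sources---the bulk contribution $\mu_j$, the slice $\langle B_j,f,r\rangle$, and the annular mass induced by $\epsilon_j>0$---which is handled only by combining the slicing inequality with Fubini and a careful diagonal extraction.
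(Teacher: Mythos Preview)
The paper does not prove this lemma; it is quoted from \cite{Sormani-AA} as one of the background results listed without proof in Section~\ref{S:background}. So there is no in-paper argument to compare against, and your proof must stand on its own.

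Your overall strategy---embed everything in a common $Z$, slice the flat decomposition with a $1$-Lipschitz distance function, and control slice masses via the Ambrosio--Kirchheim coarea inequality---is the standard and correct route. The gap is in the paragraph handling the recentering from $\varphi_\infty(p_\infty)$ to $\varphi_j(p_j)$. You assert that $\mass(\varphi_{j\#}T_j \rstr \{f<R+1\})$ is eventually bounded by $\mass(\varphi_{\infty\#}T_\infty \rstr \{f<R+1\})+1$ ``via the decomposition above,'' but the decomposition $\varphi_{j\#}T_j=\varphi_{\infty\#}T_\infty + A_j + \partial B_j$ gives no such bound: $\mass(A_j)+\mass(B_j)\to 0$ says nothing about $\mass(\partial B_j)$, and hence nothing about $\mass(\varphi_{j\#}T_j)$. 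Intrinsic flat convergence does not force any upper bound on $\mass(M_j)$; for instance, the boundaries of the rectangles $[0,j]\times[0,j^{-2}]$ in $\mathbb{R}^2$, taken with the restricted Euclidean metric, have mass $\sim 2j$ while their intrinsic flat distance to ${\bf{0}}$ is at most the enclosed area $j^{-1}$. So your Fubini bound $2\epsilon_j\,\mass(\varphi_{j\#}T_j\rstr\{f<R+1\})$ need not tend to zero.

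The repair is to swap the order of the two corrections: slice $(A_j+\partial B_j)$ with $f_j(z)=d_Z(z,\varphi_j(p_j))$ rather than with $f$, so that $\varphi_{j\#}T_j\rstr\{f_j<r\}$ is exactly $\varphi_{j\#}(T_j\rstr B(p_j,r))$, and the leftover annular term becomes $\varphi_{\infty\#}T_\infty\rstr\{f_j<r\}-\varphi_{\infty\#}T_\infty\rstr\{f<r\}$. Its mass is at most $||\varphi_{\infty\#}T_\infty||(\{|f-r|\le\epsilon_j\})$, and since $T_\infty$ is a single current of finite mass this tends to $||\varphi_{\infty\#}T_\infty||(\{f=r\})=0$ for almost every $r$, with no subsequence needed for this term. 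The slice term $\langle B_j,f_j,r\rangle$ is handled exactly as you wrote; the countable union over $j$ of the exceptional $r$-sets is still null. With this modification the rest of your argument goes through, and the same swap works in the two degenerate cases.
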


\begin{thm}(c.f. \cite{Sormani-AA})\label{Flat-Arz-Asc}
Suppose $M_i=(X_i, d_i, T_i)$ are integral current spaces and
\be
M_i \Fto M_\infty,
\ee
and suppose we have Lipschitz maps into
a compact metric space $Z$, % CHANGED FROM W TO Z 
% to have notation be consistent
\be
F_i: X_i \to Z \textrm{ with } \Lip(F_i)\le K,
\ee
then a subsequence converges to a Lipschitz map
\be
F_\infty: X_\infty \to Z \textrm{ with }\Lip(F_\infty)\le K.
\ee
  More
specifically, there exists distance preserving maps 
of the subsequence, $\varphi_i: X_i \to Z$,
such that 
\be
d_F^Z(\varphi_{i\#} T_i , \varphi_\infty T_\infty)\to 0
\ee
and for any sequence $p_i\in X_i$ converging to $p\in X_\infty$
(i.e. $d_Z(\varphi_i(p_i), \varphi_\infty(p))\to 0$), we have
\be
\lim_{i\to\infty}F_i(p_i)=F_\infty(p_\infty).
\ee
\end{thm}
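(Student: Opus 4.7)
The plan is to reduce the statement to a classical Arzelà–Ascoli diagonal argument, using the compactness of $Z$ to extract pointwise limits on a countable dense set of $X_\infty$, and then promoting this to convergence along every converging sequence via the Lipschitz bound.

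First I would unpack the hypothesis $M_i \Fto M_\infty$ via Definition \ref{def-flat1}: there is a complete metric space $W$ and distance preserving maps $\varphi_i: \bar X_i \to W$ and $\varphi_\infty: \bar X_\infty \to W$ with $d_F^W(\varphi_{i\#}T_i,\varphi_{\infty\#}T_\infty)\to 0$. I would fix a countable dense set $D = \{d_1,d_2,\dots\} \subset X_\infty$, and for each $k$ apply Lemma \ref{to-a-limit} to obtain a sequence $d_i^{(k)} \in X_i$ with $\varphi_i(d_i^{(k)}) \to \varphi_\infty(d_k)$ in $W$. Because $Z$ is compact, a standard diagonal extraction produces a subsequence (still denoted by $i$) along which $F_i(d_i^{(k)})$ converges in $Z$ for every $k$; define $F_\infty(d_k) := \lim_i F_i(d_i^{(k)})$.

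Next I would verify that $F_\infty$ is $K$-Lipschitz on $D$. Since each $\varphi_i$ is distance preserving,
\[
d_i\bigl(d_i^{(k)}, d_i^{(\ell)}\bigr) \;=\; d_W\bigl(\varphi_i(d_i^{(k)}),\varphi_i(d_i^{(\ell)})\bigr) \;\longrightarrow\; d_W\bigl(\varphi_\infty(d_k),\varphi_\infty(d_\ell)\bigr) \;=\; d_\infty(d_k,d_\ell),
\]
and the $K$-Lipschitz bound $d_Z(F_i(d_i^{(k)}),F_i(d_i^{(\ell)})) \le K\, d_i(d_i^{(k)}, d_i^{(\ell)})$ passes to the limit. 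Thus $F_\infty|_D$ is $K$-Lipschitz into the complete space $Z$ and extends uniquely to a $K$-Lipschitz map $F_\infty: X_\infty \to Z$ (then to $\bar X_\infty$ by continuity).

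Finally I would check the convergence assertion for an arbitrary sequence $p_i \in X_i$ with $\varphi_i(p_i) \to \varphi_\infty(p)$. Given $\varepsilon > 0$, pick $d_k \in D$ with $d_\infty(p,d_k) < \varepsilon$. Distance preservation gives $d_i(p_i, d_i^{(k)}) \to d_\infty(p,d_k)$, so the triangle inequality
\[
d_Z\bigl(F_i(p_i),F_\infty(p)\bigr) \;\le\; K\, d_i\bigl(p_i, d_i^{(k)}\bigr) + d_Z\bigl(F_i(d_i^{(k)}),F_\infty(d_k)\bigr) + d_Z\bigl(F_\infty(d_k),F_\infty(p)\bigr)
\]
yields $\limsup_i d_Z(F_i(p_i),F_\infty(p)) \le 2K\varepsilon$, and letting $\varepsilon \to 0$ finishes the argument. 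The main subtlety is conceptual rather than technical: the intrinsic flat hypothesis only gives us a common space $W$ in which the currents are nearly flat-close, but because the $\varphi_i$ are distance preserving we can import exact distances from $X_i$ into $W$, which is exactly what lets the Lipschitz constant $K$ pass cleanly through the diagonal limit. I expect the delicate bookkeeping point to be the simultaneous choice of the countable family of sequences $\{d_i^{(k)}\}_k$ and their compatibility with an arbitrary converging sequence $p_i \to p$ — this is handled, as above, by density of $D$ combined with the fact that distances in $W$ are the genuine distances in $X_i$ and $X_\infty$.
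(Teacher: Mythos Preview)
The paper does not actually prove Theorem~\ref{Flat-Arz-Asc}: it is listed among the background results in the subsection ``Useful Lemmas and Theorems concerning Intrinsic Flat convergence,'' which is prefaced by the blanket remark that all of them ``are proven rigorously in \cite{Sormani-AA}.'' There is therefore no in-paper proof to compare your proposal against.

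That said, your argument is the natural Arzel\`a--Ascoli--type proof one would expect, and the outline is sound. Two small points are worth tightening. First, when you invoke Lemma~\ref{to-a-limit} to produce the sequences $d_i^{(k)}$, you need them to converge with respect to the embeddings $\varphi_i$ into $W$ that you fixed at the outset; the lemma as stated only asserts the existence of \emph{some} embeddings realizing the convergence of points, so strictly speaking you should either note that its proof works relative to any fixed family witnessing the flat convergence, or else take the $\varphi_i$ furnished by that lemma as your starting data and build everything on top of those. Second, you have silently (and correctly) repaired a notational clash in the theorem statement: the paper reuses the symbol $Z$ both for the compact target of the $F_i$ and for the ambient space receiving the $\varphi_i$, whereas you introduce a separate $W$ for the latter.
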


\begin{thm}(c.f. \cite{Sormani-AA})\label{B-W-BASIC}
Suppose $M^m_i=(X_i, d_i, T_i)$ are integral current spaces 
which converge in the intrinsic flat sense to a 
nonzero integral current space 
$M^m_\infty=(X_\infty, d_\infty, T_\infty)$.
Suppose there exists $r_0>0$ and a sequence
$p_i \in M_i$ such that for almost every $r\in (0, r_0)$ we have
integral current spaces, $S(p_i,r)$, for all $i\in \mathbb{N}$ and
\be 
\liminf_{i\to \infty} d_{\mathcal{F}}(S(p_i,r),{\bf{0}}) =h_0>0.
\ee 
Then there exists a subsequence, also denoted $M_i$, such that
$p_{i}$ converges to $p_\infty\in \bar{X}_\infty$.
\end{thm}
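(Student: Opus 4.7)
The plan is to embed everything in a common complete metric space $Z$ provided by the intrinsic flat convergence, use the Ambrosio--Kirchheim slicing theorem to transfer mass from $T_i$ to $T_\infty$ on balls $B(z_i,r)$ with $z_i=\varphi_i(p_i)$, and then apply tightness of the finite limit mass measure to extract a convergent subsequence. By the definition of intrinsic flat convergence I fix a complete metric space $Z$ (separable, after restricting to the closure of the union of supports), distance preserving maps $\varphi_i:\bar X_i\to Z$ for $i\in\mathbb{N}\cup\{\infty\}$, and currents $U_i\in\intcurr_m(Z)$, $V_i\in\intcurr_{m+1}(Z)$ satisfying $\varphi_{i\#}T_i-\varphi_{\infty\#}T_\infty = U_i+\partial V_i$ and $\mass(U_i)+\mass(V_i)\to 0$. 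Set $z_i=\varphi_i(p_i)$; the target is to produce $p_\infty\in\bar X_\infty$ along with a subsequence $z_i\to \varphi_\infty(p_\infty)$.

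The first main step is to establish a uniform lower bound on $\mu(B(z_i,r))$, where $\mu=\|\varphi_{\infty\#}T_\infty\|$, for a suitable fixed $r\in(0,r_0)$. Because $\varphi_i$ is distance preserving, $(\varphi_{i\#}T_i)\rstr B(z_i,r) = \varphi_{i\#}(T_i\rstr B(p_i,r))$, and slicing $V_i$ by the $1$-Lipschitz function $d_Z(\cdot,z_i)$ gives, for a.e.\ $r$,
\[
(\varphi_{i\#}T_i)\rstr B(z_i,r) - (\varphi_{\infty\#}T_\infty)\rstr B(z_i,r) = U_i\rstr B(z_i,r) - \langle V_i, d_Z(\cdot,z_i), r\rangle + \partial\bigl(V_i\rstr B(z_i,r)\bigr).
\]
Since $\int_0^{r_0}\mass(\langle V_i,d_Z(\cdot,z_i),s\rangle)\,ds\le\mass(V_i)\to 0$, Fatou's lemma yields $\liminf_i\mass(\langle V_i,d_Z(\cdot,z_i),r\rangle)=0$ for a.e.\ $r$. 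I then select $r\in(0,r_0)$ that simultaneously satisfies this property, the hypothesis $\liminf_i d_\Fm(S(p_i,r),\mathbf 0)\ge h_0$, and the condition that each $S(p_i,r)$ is an integral current space. Extracting a subsequence and bounding flat norms by masses yields $d_F^Z((\varphi_{\infty\#}T_\infty)\rstr B(z_i,r),0)\ge h_0/4$, and hence $\mu(B(z_i,r))\ge h_0/4$ for all large $i$.

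The second main step is to extract a convergent subsequence of $z_i$ and identify the limit as a point of $\varphi_\infty(\bar X_\infty)$. Since $\mu$ is a finite Borel measure on the complete separable metric space $Z$ it is tight, so there exists a compact $K\subset Z$ with $\mu(Z\setminus K)<h_0/8$. Then $B(z_i,r)\cap K\ne\emptyset$, and choosing $w_i\in K$ with $d_Z(z_i,w_i)<r$ and passing to a subsequence with $w_i\to w_\infty\in K$ confines the $z_i$ eventually to the totally bounded set $T_r(K)$; completeness of $Z$ then yields a further subsequence $z_i\to z_\infty\in Z$. If $z_\infty\notin\varphi_\infty(\bar X_\infty)$, then $(p_i)$ is Cauchy with no limit in $\bar X_\infty$ in the sense of Definition~\ref{point-Cauchy}, and Lemma~\ref{balls-converge} forces $S(p_i,r')\Fto\mathbf 0$ for a.e.\ small $r'$, contradicting the hypothesis; hence $z_\infty=\varphi_\infty(p_\infty)$ for some $p_\infty\in\bar X_\infty$, as desired. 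The main obstacle is that $Z$ is only complete and not compact, so a naive subsequence extraction is unavailable; this is overcome by combining the slicing identity above (which transfers the lower mass bound from $T_i$ to $T_\infty$ on each $B(z_i,r)$) with the tightness of the finite mass measure $\mu$, together with the Fatou-type choice of a single radius $r$ valid simultaneously for all $i$.
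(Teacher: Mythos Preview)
The paper does not contain its own proof of this theorem. It is stated in the background section with the attribution ``(c.f.~\cite{Sormani-AA})'', and the paper explicitly says that all such results ``are proven rigorously in \cite{Sormani-AA}.'' So there is no in-paper argument to compare against.

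Your proof is correct and is essentially the standard approach one would expect: the slicing identity transfers the uniform lower bound on $d_\Fm(S(p_i,r),\mathbf{0})$ to a uniform lower bound on $\mu(B(z_i,r))$ for the \emph{fixed} measure $\mu=\|\varphi_{\infty\#}T_\infty\|$; tightness of $\mu$ (which follows directly from the Ambrosio--Kirchheim fact that the mass measure of an integral current is concentrated on a $\sigma$-compact set, so you do not strictly need separability of $Z$) then traps the $z_i$ in a totally bounded set; and the appeal to Lemma~\ref{balls-converge} handles the possibility that $z_\infty\notin\varphi_\infty(\bar X_\infty)$.

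Two small remarks. First, the inequality $d_\Fm(S(p_i,r),\mathbf{0})\le d_F^Z\bigl((\varphi_{i\#}T_i)\rstr B(z_i,r),0\bigr)$, which you use implicitly when passing from the hypothesis to a lower bound in $Z$, is worth making explicit; it holds because $\varphi_i$ restricted to $\set(T_i\rstr B(p_i,r))$ is itself a distance-preserving map and hence competes in the infimum defining $d_\Fm$. Second, your final step invokes Lemma~\ref{balls-converge}, another result cited from \cite{Sormani-AA} without proof here. If you prefer a self-contained argument, you can avoid it: once $z_i\to z_\infty$ along a subsequence, rerun the slicing estimate for a sequence of radii $r_k\downarrow 0$ (diagonalizing over further subsequences) to obtain $\mu(B(z_\infty,2r_k))\ge h_0/4$ for every $k$, so $z_\infty\in\spt(\mu)\subset\varphi_\infty(\bar X_\infty)$ directly.
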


\begin{thm}(c.f. \cite{Sormani-AA}) \label{Arz-Asc-Unif-Local-Isom}
Let $M_i=(X_i, d_i, T_i)$
and $M'_i=(X'_i,d'_i,T_i)$  be integral current spaces with
\be
\mass(M_i)\le V_0 \textrm{ and }\mass(\partial M_i) \le A_0
\ee
such that 
\be
M_i \Fto M_\infty \textrm{ and } M'_i \Fto M'_\infty.
\ee

Fix $\delta>0$.
Let $F_i: M_i \to M'_i$ be continuous maps which are isometries
on balls of radius $\delta$:
\be \label{iso-sat} 
\forall x\in X_i, \,\, F_i: \bar{B}(x,\delta) \to \bar{B}(F_i(x),r)\textrm{ is an isometry}
\ee

Then, when $M_\infty\neq {\bf{0}}$, we have $M'_\infty \neq {\bf{0}}$ and
there is a subsequence, also denoted $F_i$, which
converges to a (surjective) local current preserving isometry
\be
F_\infty: \bar{X}_\infty \to \bar{X}'_\infty \textrm{ satisfying (\ref{iso-sat})}.
\ee
More
specifically, there exists distance preserving maps
of the subsequence $\varphi_i: X_i \to Z$,
$\varphi'_i: X'_i \to Z'$,
such that 
\be
d_F^Z(\varphi_{i\#} T_i , \varphi_\infty T_\infty)\to 0 \textrm{ and }
d_F^{Z'}(\varphi'_{i\#} T'_i , \varphi'_\infty T'_\infty)\to 0
\ee
and for any sequence $p_i\in X_i$ converging to $p\in X_\infty$:
\be
\lim_{i\to\infty} \varphi_i(p_i)=\varphi_\infty(p) \in Z
\ee
we have
\be \label{iso-infty}
\lim_{i\to\infty}\varphi_i'(F_i(p_i))=\varphi_\infty'(F_\infty(p_\infty)) \in Z'.
\ee
When $M_\infty={\bf{0}}$ and $F_i$ are surjective, we have $M'_\infty={\bf{0}}$.
\end{thm}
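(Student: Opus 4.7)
The plan is to adapt the proof scheme of Theorem \ref{Flat-Arz-Asc}, using the isometry-on-$\delta$-balls hypothesis to force image points to converge and the limit map to inherit the same local isometry property. First, invoke the intrinsic flat convergences $M_i \Fto M_\infty$ and $M'_i \Fto M'_\infty$ to obtain complete metric spaces $Z, Z'$ and distance preserving maps $\varphi_i: \bar{X}_i \to Z$, $\varphi'_i: \bar{X}'_i \to Z'$ with flat convergence of the push-forward currents. The composite maps $\tilde F_i := \varphi'_i \circ F_i: X_i \to Z'$ are then $1$-Lipschitz on every $\bar{B}(x,\delta)$, so they are uniformly equicontinuous at scale $\delta$.

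Second, pick a countable dense subset $\{p_k\} \subset X_\infty$. By Lemma \ref{to-a-limit}, choose $p_{k,i} \in X_i$ with $\varphi_i(p_{k,i}) \to \varphi_\infty(p_k)$, and set $q_{k,i} := F_i(p_{k,i}) \in X'_i$. The central step is to show that $\{q_{k,i}\}_i$ has a subsequence converging to some $q_k \in \bar{X}'_\infty$ rather than disappearing. Since $F_i$ restricts to a bijective isometry of $\bar{B}(p_{k,i},\delta)$ onto $\bar{B}(q_{k,i},\delta)$, for almost every $r \in (0,\delta)$ Lemma \ref{lem-ball} provides both $S(p_{k,i},r)$ and $S(q_{k,i},r)$ as integral current spaces, and the isometry identifies their mass measures, yielding
\be
\mass\bigl(S(q_{k,i},r)\bigr) = \mass\bigl(S(p_{k,i},r)\bigr).
\ee
By Lemma \ref{balls-converge}, $S(p_{k,i},r) \Fto S(p_k,r) \neq \mathbf{0}$, so $\liminf_i d_{\Fm}(S(q_{k,i},r),\mathbf{0}) > 0$; Theorem \ref{B-W-BASIC} applied in the sequence $M'_i$ then produces the desired $q_k$. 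A diagonal extraction yields a single subsequence along which $\varphi'_i(q_{k,i}) \to \varphi'_\infty(q_k)$ for every $k$, and this already forces $M'_\infty \neq \mathbf{0}$.

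Third, define $F_\infty(p_k) := q_k$. The $\delta$-local isometry of each $F_i$ passes to the limit on the dense set, so $F_\infty$ extends by uniform continuity to a map $\bar{X}_\infty \to \bar{X}'_\infty$ satisfying (\ref{iso-sat}), and (\ref{iso-infty}) for general converging sequences $p_i \to p$ follows by approximating $p$ by points $p_k$ and using the Lipschitz bound on $\tilde F_i$ restricted to $\delta$-balls. The current preserving property on each $\delta$-ball follows from applying Lemma \ref{balls-converge} in parallel on both sides: for almost every $r < \delta$, $S(p_{k,i},r) \Fto S(p_k,r)$ and $S(q_{k,i},r) \Fto S(q_k,r)$, and since $F_i$ pushes the former current onto the latter, the limits match under $F_\infty$. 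Surjectivity comes from running the same construction with the local inverses $F_i^{-1}$ on $\delta$-balls. The degenerate case $M_\infty = \mathbf{0}$ with $F_i$ surjective is handled by combining the uniform mass bound $\mass(M_i) \le V_0$, local mass preservation on $\delta$-balls, and surjectivity to conclude $\mass(M'_i) \to 0$, hence $M'_\infty = \mathbf{0}$.

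The main obstacle is the non-disappearance step, which hinges on transferring the lower mass bound on $S(p_{k,i},r)$ to $S(q_{k,i},r)$. This requires that the ball current $T'_i \rstr B(q_{k,i},r)$ in $M'_i$ agree, in mass, with the push-forward $(F_i)_\#(T_i \rstr B(p_{k,i},r))$. This should follow from the bijective isometry of balls provided the hypothesis "$F_i: \bar{B}(x,\delta) \to \bar{B}(F_i(x),\delta)$ is an isometry" is interpreted within the category of integral current spaces, but it is the one place where the proof is not a routine extension of Theorem \ref{Flat-Arz-Asc} and where one must carefully use the integral current space structure rather than only the metric structure.
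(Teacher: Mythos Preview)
The paper does not contain a proof of this theorem. Theorem~\ref{Arz-Asc-Unif-Local-Isom} appears in the background section with the attribution ``c.f.~\cite{Sormani-AA}'', and the paper explicitly states that all such results ``are proven rigorously in \cite{Sormani-AA}''. There is therefore no in-paper proof to compare your proposal against.

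That said, a few comments on your sketch on its own merits. The overall architecture---lift a dense set via Lemma~\ref{to-a-limit}, use the $\delta$-ball isometry to transfer nontriviality of $S(p_{k,i},r)$ to $S(q_{k,i},r)$, invoke Theorem~\ref{B-W-BASIC} to prevent disappearance, diagonalize, and extend---is sound and is indeed the natural adaptation of the proof of Theorem~\ref{Flat-Arz-Asc}. You correctly isolate the one genuinely nontrivial point: the $\delta$-ball isometry must be read as an isometry of integral current spaces (so that $S(p_{k,i},r)$ and $S(q_{k,i},r)$ are isometric as integral current spaces, hence have equal intrinsic flat distance to $\mathbf{0}$), not merely as a metric isometry. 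Note that the displayed equality of masses is not by itself enough for Theorem~\ref{B-W-BASIC}; you need the equality $d_{\Fm}(S(q_{k,i},r),\mathbf{0})=d_{\Fm}(S(p_{k,i},r),\mathbf{0})$, which the current-space isometry gives directly.

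Two places deserve more care. First, your argument for the degenerate case $M_\infty=\mathbf{0}$ does not work as written: intrinsic flat convergence to $\mathbf{0}$ does not force $\mass(M_i)\to 0$ (mass is only lower semicontinuous), so you cannot conclude $\mass(M'_i)\to 0$ from local mass preservation. A cleaner route is to argue by contradiction: if $M'_\infty\neq\mathbf{0}$, pick $q\in X'_\infty$, lift to $q_i\in X'_i$, use surjectivity of $F_i$ to find $p_i\in X_i$ with $F_i(p_i)=q_i$, and then the $\delta$-ball isometry gives $d_{\Fm}(S(p_i,r),\mathbf{0})=d_{\Fm}(S(q_i,r),\mathbf{0})$ bounded away from zero, contradicting the last clause of Lemma~\ref{balls-converge}. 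Second, surjectivity of $F_\infty$ via ``local inverses $F_i^{-1}$'' is delicate since the $F_i$ need not be globally injective; you should instead run the non-disappearance argument in reverse (start from a dense set in $X'_\infty$, lift, and use the $\delta$-ball isometry to find preimages whose ball-currents are nontrivial).
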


%%%%%%%%%%%%%%%%%%%
%% SEWING 
\section{Sewing Riemannian Manifolds with Positive Scalar Curvature}

The main technique we will introduce in this paper is the construction of three dimensional manifolds with positive scalar curvature through a process we call ``sewing'' which involved gluing a sequence of tunnels along a curve.  We
apply Lemma~\ref{tunnellemma} which constructs Gromov-Lawson Schoen-Yau tunnels.  The lemma is proven in the Appendix.

\subsection{Gluing Tunnels between Spheres}

We begin by gluing tunnels between arbitrary collections of pairs of spheres as in 
Figure~\ref{fig-glue-mnfld}.  

\begin{prop}\label{prop-glue}   
Given a complete Riemannian manifold, $M^3$, and $A_{0} \subset M^3$ a compact subset with an even number of points $p_{i} \in A_{0}$, $i = 1, \ldots, n$, with pairwise disjoint contractible balls $B(p_i,\delta)$ which have constant positive sectional curvature $K$, for some $\delta>0$, define $A_{\delta} = T_{\delta}(A_{0})$ and
%\be
%A_{\delta} = T_{\delta}(A_{0})
%\ee 
%and 
\be\label{prop-glue-defA'}
	A_{\delta}' = A_{\delta} \setminus \left( \bigcup_{i=1}^n B(p_i,\delta/2) \right) 	
		\disjointunion \bigcup_{i=1}^{n/2} U_i
\ee
where $U_i$ are the tunnels as in Lemma~\ref{tunnellemma} connecting $\partial B(p_{2j+1},\delta/2)$ to $\partial B(p_{2j+2},\delta/2)$ for $j=0,1,\ldots,n/2-1$. 
Then
given any $\epsilon>0$, shrinking $\delta$ further, if necessary, we may create a new complete Riemannian manifold, $N^3$, 
\be\label{E:prop-glue}
	N^3 = (M^3 \setminus A_{\delta}) \disjointunion A_{\delta}'
\ee
satisfying
\be\label{prop-glue-volA'}
(1-\epsilon)\vol(A_{\delta}) \le \vol(A_{\delta}')\le \vol(A_{\delta})(1+\epsilon)
\ee
and
\be\label{prop-glue-vol-space}
(1-\epsilon)\vol(M^3)\le \vol(N^3) \le \vol(M^3) (1+\epsilon).
\ee
%Do we need to add the diameter facts from the appendix here?
%Not really since we never use this proposition, it was meant to be
%iterated to get the next proposition but Jorge didn't prove it that way.

If, in addition, $M^3$ has non-negative or positive scalar curvature, then so does $N^3$.
In fact,
\be \label{inf-scal1}
\inf_{x\in M^3} \Scal_x \ge \min \left\{0, \inf_{x\in N^3} \Scal_x\right\}
\ee
If $\partial M^3 \neq \emptyset$, the balls avoid the boundary
and $\partial M^3$ is isometric to $\partial N^3$.
\end{prop}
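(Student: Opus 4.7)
The plan is to apply Lemma~\ref{tunnellemma} once to each of the $n/2$ pairs $(p_{2j+1}, p_{2j+2})$ and assemble the resulting local surgeries into a single global construction. For each pair indexed by $j = 0, \ldots, n/2-1$, I would invoke Lemma~\ref{tunnellemma} with the given $\epsilon$, obtaining a threshold $\delta_{0,j} > 0$ and a tunnel $U_j$ of positive scalar curvature connecting $\partial B(p_{2j+1}, \delta/2)$ to $\partial B(p_{2j+2}, \delta/2)$, with $\diam(U_j) \le h(\delta)$ and the volume estimate (\ref{TL-volumeestU}). Since the balls $B(p_i, \delta)$ are pairwise disjoint by hypothesis, so are the smaller balls $B(p_i, \delta/2)$, and the $n/2$ surgeries can be carried out independently without interference. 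Setting $\delta_0 = \min_j \delta_{0,j}$ and, if necessary, shrinking $\delta$ further (arranging also that each $B(p_i, \delta)$ lies in the interior of $M^3$), I assemble the complete Riemannian manifold $N^3$ as in (\ref{E:prop-glue}).

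Next I would verify the volume estimates. By construction, $M^3 \setminus A_\delta$ and $N^3 \setminus A_\delta'$ are isometric, so any volume difference arises only from $A_\delta$ versus $A_\delta'$:
\[
\vol(A_\delta') - \vol(A_\delta) \;=\; \sum_{j=0}^{n/2-1} \vol(U_j) \;-\; \sum_{i=1}^n \vol(B(p_i, \delta/2)).
\]
Since all balls have radius $\delta/2$ and share the same constant positive sectional curvature $K$, they share a common volume $V_K$, and (\ref{TL-volumeestU}) gives $\sum_j \vol(U_j) \in [(1-\epsilon), (1+\epsilon)] \cdot n V_K = [(1-\epsilon), (1+\epsilon)] \cdot \sum_i \vol(B(p_i, \delta/2))$. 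Hence the absolute difference above is at most $\epsilon \sum_i \vol(B(p_i, \delta/2)) \le \epsilon \vol(A_\delta)$, which establishes (\ref{prop-glue-volA'}). Because $\vol(N^3) - \vol(M^3) = \vol(A_\delta') - \vol(A_\delta)$ and $\vol(A_\delta) \le \vol(M^3)$, the estimate (\ref{prop-glue-vol-space}) follows.

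For the scalar curvature assertion, $N^3 \setminus \bigcup_j U_j$ is isometric to $M^3 \setminus \bigcup_i B(p_i, \delta/2)$, so $\Scal_N = \Scal_M$ on this region; and inside each tunnel $U_j$, Lemma~\ref{tunnellemma} gives $\Scal_N > 0$. Since the removed balls carry $\Scal_M = 6K > 0$, any negative part of $\Scal_M$ already lies outside them and persists unchanged in $N^3$, while the inserted tunnels contribute only positive values. This yields (\ref{inf-scal1}) and immediately gives preservation of nonnegativity or positivity of $\Scal$. Finally, because each $B(p_i,\delta)$ is taken to lie in the interior of $M^3$, the surgeries leave a neighborhood of $\partial M^3$ unaltered, so $\partial N^3$ is isometric to $\partial M^3$.

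I do not foresee any substantive obstacle: all of the hard analytic content is absorbed into Lemma~\ref{tunnellemma}, and this proposition is essentially its $n/2$-fold iteration together with bookkeeping of accumulated volume errors. The only mildly delicate point is choosing a single pair $(\delta, \delta_0)$ that works for all pairs simultaneously, which is handled by taking a minimum; and verifying that the sum of the per-pair relative volume errors stays bounded by a single $\epsilon$, which works out precisely because the volume-balancing property $\vol(U_j) \approx \vol(B(p_{2j+1},\delta/2)) + \vol(B(p_{2j+2},\delta/2))$ is the key quantitative output of the tunnel lemma.
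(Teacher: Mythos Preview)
Your proposal is correct and follows essentially the same approach as the paper: apply Lemma~\ref{tunnellemma} to each of the $n/2$ pairs, compute $\vol(A_\delta') - \vol(A_\delta) = \sum_j \vol(U_j) - \sum_i \vol(B(p_i,\delta/2))$ and bound it via (\ref{TL-volumeestU}), then deduce (\ref{prop-glue-vol-space}) from $\vol(N) - \vol(M) = \vol(A_\delta') - \vol(A_\delta)$ and $\vol(A_\delta) \le \vol(M)$. Your treatment of the scalar curvature and boundary claims is slightly more explicit than the paper's, but the argument is the same.
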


\bd % def sewn manifold
We say that we have glued the manifold to itself with a tunnel between the collection of pairs of sphere  $\partial B(p_i,\delta)$ to $\partial B(p_{i+1},\delta)$
for $i=1$ to $n-1$.   See Figure~\ref{fig-glue-mnfld}.
\ed

\begin{figure}[htbp]
\begin{center}
\includegraphics[width=3in]{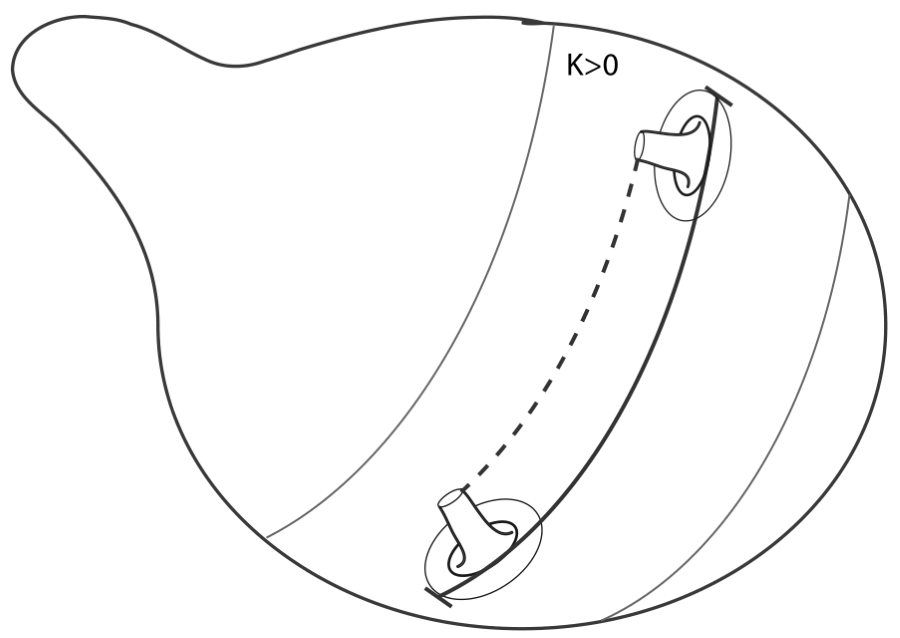}
\caption{Gluing two spheres with a tunnel.}
\label{fig-glue-mnfld}
\end{center}
\end{figure}

\begin{proof}
For simplicity of notation, set $A=A_{\delta}$ and $A'=A_{\delta}'$.

By induction on $n$ and Lemma~\ref{tunnellemma}, we see that $N^{3}$ can be given a metric of positive scalar curvature whenever $M^{3}$ has positive scalar curvature. 

Using the fact that the balls are pairwise disjoint and of the same volume, and (\ref{TL-volumeestU}) from Lemma~\ref{tunnellemma}, we have
the volume of $A'$ can be estimated: 
\begin{align*}
	\vol(A') &= \vol(A) - \sum_{i=1}^{n} \vol(B(p_i,\delta/2)) + \sum_{i=1}^{n/2} \vol(U_i) \\
		&= \vol(A) + \frac{n}{2} \cdot (\vol(U_i) - 2 \vol(B(p_i,\delta/2))) \\
	&\le \vol(A) + \frac{n}{2} \cdot (2\vol(B(p_i,\delta/2)) \cdot \epsilon)\\
	&= \vol(A) + \epsilon \cdot (n\vol(B(p_i,\delta/2))) \qquad \text{(by (\ref{TL-volumeestU}))}\\
	&\le \vol(A) + \epsilon \vol(A)
\end{align*}
which yields the right-hand side of (\ref{prop-glue-volA'}). 

Similarly,
\begin{align*}
	\vol(A') &= \vol(A) - \sum_{i=1}^{n} \vol(B(p_i,\delta/2)) + \sum_{i=1}^{n/2} \vol(U_i) \\
		&= \vol(A) + \frac{n}{2} \cdot (\vol(U_i) - 2 \vol(B(p_i,\delta/2))) \\
	&\ge \vol(A) + \frac{n}{2} \cdot (-2\vol(B(p_i,\delta/2)) \cdot \epsilon)\\
	&= \vol(A) - \epsilon \cdot (n\vol(B(p_i,\delta/2))) \qquad \text{(by (\ref{TL-volumeestU}))}\\
	&\ge \vol(A) - \epsilon \vol(A)
\end{align*}
which yields the left-hand side of (\ref{prop-glue-volA'}). 

% volume estimate for N
To estimate the volume of $N$ we will use the volume estimates for $A'$. Using (\ref{TL-volumeestU}) from Lemma~\ref{tunnellemma} again, we have
\begin{align*}
	\vol(N) &= \vol(M) - \vol(A) + \vol(A') \\
	&\le \vol(M) - \vol(A) + (1+\epsilon)\vol(A)\\
	&= \vol(M) + \epsilon \vol(A) \qquad \text{(by (\ref{TL-volumeestN}))}\\
	&\le \vol(M) + \epsilon \vol(M),
\end{align*}
which yields the right-hand side of (\ref{prop-glue-vol-space}). 

Similarly,
\begin{align*}
	\vol(N) &= \vol(M) - \vol(A) + \vol(A') \\
	&\ge \vol(M) - \vol(A) + (1-\epsilon)\vol(A)\\
	&= \vol(M) -\epsilon \vol(A)  \qquad \text{(by (\ref{TL-volumeestN}))}\\
	&\ge \vol(M) -\epsilon \vol(A),
\end{align*}
which yields the left-hand side of (\ref{prop-glue-vol-space}). 

Finally, observe that (\ref{inf-scal1}) follows since Lemma~\ref{tunnellemma} shows that the tunnels $U_{i}$ have positive scalar curvature. 
\end{proof} %ADDED 3-16-17

\subsection{Sewing along a Curve}

We now describe our process we call sewing along a curve, where a sequence of balls is taken to be located along curve much like holes created when stitching a thread.  We glue a sequence of tunnels to the boundaries of these balls as in Figure~\ref{fig-sewn-mnfld}.  We say that we have sewn the manifold along the curve $C$ through the given balls.  By gluing tunnels in this precise way we
are able to shrink the diameter of the edited tubular neighborhood around the
curve because travel along the curve can be conducted efficiently through the tunnels.

\begin{figure}[htbp]
\begin{center}
\includegraphics[width=3in]{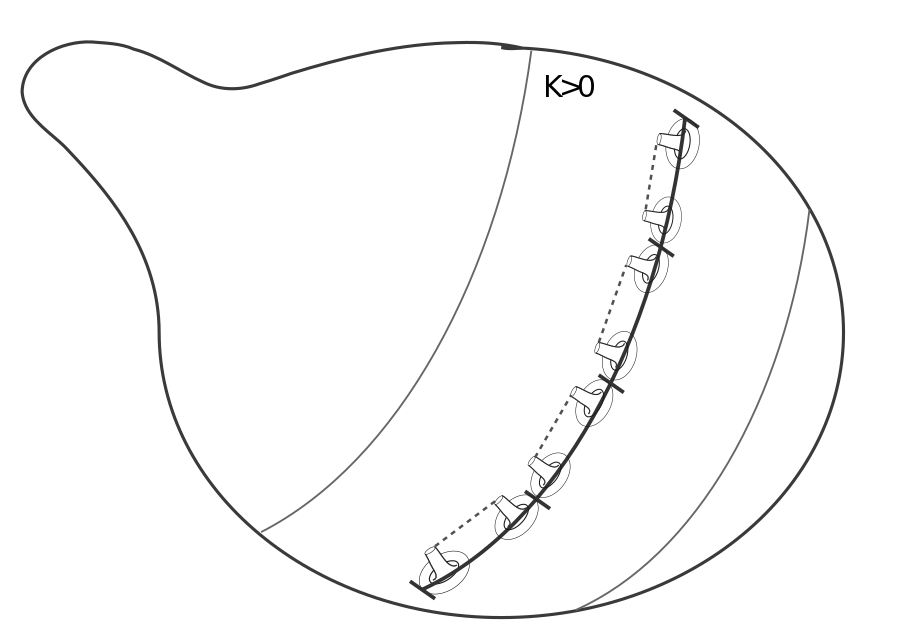}
\caption{Sewing a manifold through eight balls along a curve.}
\label{fig-sewn-mnfld}
\end{center}
\end{figure}

\begin{prop}\label{sewn-curve}
Given a complete Riemannian manifold, $M^3$, and $A_{0}\subset M^3$ Riemannian isometric to an embedded curve, $C:[0,1]\to \mathbb{S}^3_K$ 
possibly with $C(0)=C(1)$ and parametrized proportional to arclength, in a 
standard sphere of constant sectional curvature $K$,
define $A_{a} = T_{a}(A_{0})$ as in Proposition~\ref{prop-glue} and assume that $A_{a}$ is Riemannian isometric to $T_{a}(C) \subset \mathbb{S}^3_K$. 
Then, given any $\epsilon>0$ there exists $n$ sufficiently large and $\delta=\delta(\epsilon,n,C,K)>0$ sufficiently small as in (\ref{sewn-curve-choiceofdelta}) so that
 we can ``sew along the curve'' to create a new complete Riemannian 
 manifold $N^3$,
\be
N^3 = (M^3 \setminus A_{\delta} )\disjointunion A_{\delta}',
\ee
exactly as in Proposition~\ref{prop-glue}, for 
\be\label{sewn-curve-choiceofdelta}
\delta=\delta(\epsilon,n,C,K) \textrm{  such that  } 
	\delta<a,\,
	\lim_{n\to \infty} n \cdot h(\delta)= 0,
	\textrm{ and }
	\lim_{n\to \infty} n \cdot \delta = 0,
\ee
%[Basilio: I added $\delta_n<a$ as a fail-safe]
where $h$ is defined in Lemma~\ref{tunnellemma}
and the disjoint balls $B(p_i,\delta)$ are to be centered at 
\be
p_{2j+1}=C\left(\frac{j}{n}+\frac{\delta}{L(C)}\right) \qquad 
p_{2j+2}=C\left(\frac{j+1}{n} -\frac{\delta}{L(C)}\right) \qquad j=0,1,\ldots,n-1
\ee
and 
\be
A_{\delta}'= A_{\delta} \setminus \left(\bigcup_{i=1}^{2n} B(p_i,\delta/2)\right)\disjointunion \bigcup_{j=0}^{n-1} U_{2j+1}.  
\ee
Thus, the tunnels $U_{2j+1}$ connect $\partial B(p_{2j+1},\delta)$ to $\partial B(p_{2j+2},\delta)$ for $j=0,1,\ldots, n-1$. 

Furthermore,
\be\label{sewn-curve-tubular}
(1-\epsilon)\vol(A_{\delta}) \le \vol(A_{\delta}')\le \vol(A_{\delta})(1+\epsilon) 
\ee
and
\be\label{sewn-curve-vol}
(1-\epsilon) \vol(M^3)\le \vol(N^3) \le \vol(M^3) (1+\epsilon)
\ee
and
\be \label{sewn-curve-diam}
\diam(A_{\delta}')\le H(\delta)= L(C)/n + (n+1)\, h(\delta)+(5n+2)\, \delta.
\ee

Since 
\be\label{sewn-curve-Hto0}
\lim_{\delta\to 0} H(\delta)=0 \textrm{ uniformly for } K\in (0,1], 
\ee
we say we have sewn the curve, $A_{0}$, arbitrarily short. 

If, in addition, $M^3$ has non-negative or positive scalar curvature, then so does $N^3$.
In fact,
\be \label{inf-scal-1}
\inf_{x\in M^3} \Scal_x \ge \min \left\{0, \inf_{x\in N^3} \Scal_x\right\}
\ee
If $\partial M^3 \neq \emptyset$, the balls avoid the boundary
and $\partial M^3$ is isometric to $\partial N^3$.   
\end{prop}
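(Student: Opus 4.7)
The plan is to treat Proposition~\ref{sewn-curve} as a specialization of Proposition~\ref{prop-glue} in which the $2n$ sphere centers are placed systematically along the curve $C$, and then to add the new diameter estimate. First I would verify the hypotheses of Proposition~\ref{prop-glue}: since $A_a$ is Riemannian isometric to $T_a(C) \subset \mathbb{S}^3_K$, each ball $B(p_i,\delta)$ with $\delta < a$ inherits constant sectional curvature $K$ from the sphere model, and pairwise disjointness of the open balls can be checked inside that model, where the minimum arc-length separation between consecutive centers along $C$ is $2\delta$ (between $p_{2j+2}$ and $p_{2j+3}$) and $L(C)/n - 2\delta$ (between $p_{2j+1}$ and $p_{2j+2}$). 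Once $\delta$ is small enough that $L(C)/n - 2\delta > 2\delta$, these balls are pairwise disjoint. Applying Proposition~\ref{prop-glue} then produces $N^3$ and immediately yields the volume estimates (\ref{sewn-curve-tubular}) and (\ref{sewn-curve-vol}), the positive scalar curvature statement (\ref{inf-scal-1}), and the boundary isometry with no further work.

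The main new content, and the main obstacle, is the diameter bound (\ref{sewn-curve-diam}). I plan to prove it by producing, for arbitrary $x, y \in A_\delta'$, an explicit path through a ``tunnel skeleton''. The skeleton consists of the surviving arcs of $C$ together with the $n$ tunnels: within each segment the curve arc between the two removed half-balls has length at most $L(C)/n$, between adjacent segments the curve arc has length $2\delta$ (shrinking to length $\delta$ after the two collar excursions), each tunnel $U_{2j+1}$ has diameter at most $h(\delta)$ by (\ref{TL-diameterU}), and the tubular slab $A_\delta$ has radial thickness $\delta$. After radially projecting $x$ and $y$ onto this skeleton at cost at most $\delta$ each, the worst-case connecting path consists of at most one within-segment curve traversal of length at most $L(C)/n$, followed by alternating tunnel crossings (at most $n$ of them, contributing at most $n\cdot h(\delta)$) and inter-segment arcs (at most $n-1$ of them, contributing at most $2(n-1)\delta$), together with the $2n$ half-ball collar excursions each contributing $O(\delta)$. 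Bundling constants yields the stated bound $H(\delta) = L(C)/n + (n+1)h(\delta) + (5n+2)\delta$.

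Finally, the sewing-arbitrarily-short conclusion (\ref{sewn-curve-Hto0}) follows by selecting $\delta = \delta(\epsilon,n,C,K)$ so that simultaneously $\delta < a$, the volume tolerance coming from Proposition~\ref{prop-glue} is at most $\epsilon$, and both $n\cdot\delta$ and $n\cdot h(\delta)$ tend to zero as $n\to\infty$. This is possible because $h(\delta) = O(\delta)$ uniformly in $K \in (0,1]$ by (\ref{TL-diameterUorderdelta}); for instance $\delta = 1/n^2$ works once $n$ is large. The uniformity in $K$ in (\ref{sewn-curve-Hto0}) is then inherited from (\ref{TL-tunnellengthtozero}), and the remaining scalar curvature and boundary assertions were already handled via Proposition~\ref{prop-glue} in the first step.
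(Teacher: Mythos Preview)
Your proposal is correct and follows essentially the same route as the paper: invoke Proposition~\ref{prop-glue} for the volume, scalar-curvature, and boundary statements, and then establish the diameter bound by first moving an arbitrary point $x\in A_\delta'$ to a nearby tunnel boundary (at cost roughly $\delta + L(C)/(2n) + h(\delta)$, handling separately the cases $x\in U$ and $x\notin U$) and then hopping between tunnel boundaries along the curve skeleton, summing the at most $n$ tunnel diameters $h(\delta)$ and the $O(\delta)$ inter-tunnel arcs. Your bookkeeping of the constants is organized slightly differently from the paper's (you front-load a single $L(C)/n$ segment rather than splitting it as $2\cdot L(C)/(2n)$ into the two endpoint approaches), but this is cosmetic and leads to the same bound $H(\delta)$.
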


\begin{proof}
By the fact that $C$ is embedded, for $n$ sufficiently large, the balls in the statement are disjoint even when $C(0)=C(1)$ so we may apply Propositon~\ref{prop-glue} to get (\ref{sewn-curve-tubular}) and (\ref{sewn-curve-vol}).  

For simplicity of notation, let $A=A_{\delta}$ and $A'=A_{\delta}'$. 

We now verify the diameter estimate of $A'$, (\ref{sewn-curve-diam}). To do this we define sets $C_i \subset A'$ which correspond to the sets $\partial B(p_i,\delta/2) \subset A$ which are unchanged because they are the boundaries of the edited regions:
\be
	C_i \cup C_{i+1} = \partial U_{i},
\ee
whenever $i$ is an odd value. Let 
\be
	U=\bigcup_{j=0}^{n-1} U_{2j+1}.
\ee

Let $x$ and $y$ be arbitrary points in $A'$. We claim that there exists $j,k \in \{1,\ldots,2n\}$ such that
\be\label{claim:gettotunnel}
d_{A'}(x,C_j) < \delta + L(C)/(2n)+h(\delta) \textrm{ and } 
d_{A'}(y,C_k) < \delta + L(C)/(2n)+h(\delta)
\ee

By symmetry we need only prove this for $x$. Note that in case I where
\be
x \in A' \setminus U = A \setminus \bigcup_{i=1}^{2n} B(p_i,\delta/2)
\ee
we can view $x$ as a point in $A$. Let $\gamma_1 \subset A$ be the shortest path from $x$ to the closest point $c_x \in C[0,1]$ so that   $L(\gamma_1) < \delta$.  % 

If 
\be\label{claimgamma1touchesball}
\gamma_1 \cap B(p_j,\delta/2) \ne \emptyset
\ee
then 
\be
d_{A'\setminus U}(x,C_j)<\delta
\ee
and we have that (\ref{claim:gettotunnel}) holds. Otherwise, still in Case I, if (\ref{claimgamma1touchesball}) fails then we have 
\begin{eqnarray}
d_{A'\setminus U}(x,C_j) &\le& d_{A'\setminus U}(x,c_x) + d(c_x,C_j) 
	\qquad \textrm{(by the triangle inequality)}\\
	&<& \delta+\frac{L(C)}{2n},
\end{eqnarray}
where the last inequality follows from $d_{A'\setminus U}(x,c_x) \le L(\gamma_1) <\delta$ and the fact that $c_x \in C([0,1])$ is at most $L(C)/(2n)$ away from the boundary of the nearest tunnel. 

Alternatively, we have case II where $x \in U$. In this case, there exists $j$ such that $x \in U_{2j+1}$ and so
\be
d_{A'}(x,C_{2j+1}) \le \diam(U_{2j+1}) \le h(\delta).
\ee
Thus, we have the claim in (\ref{claim:gettotunnel}). 

We now proceed to prove (\ref{sewn-curve-diam}) by estimating $d_{A'}(x,y)$ for $x,y \in A'$. If $j=k$ in (\ref{claim:gettotunnel}), then $d_{A'}(x,y) \le 2(\delta + L(C)/(2n)+h(\delta))$ and we are done. Otherwise, by (\ref{claim:gettotunnel}) and the triangle inequality, we have 
\begin{eqnarray}
d_{A'}(x,y) &\le& d_{A'}(x,C_j) + d_{A'}(y,C_k) 
	+ \sup \{ d_{A'}(z,w) \mid z \in C_j, w\in C_k \}  \\
	&\le& 2(\delta + L(C)/(2n)+h(\delta)) + \sup \{ d_{A'}(z,w) \mid 
		z \in C_j, w\in C_k \}.\label{diam-needsup}
\end{eqnarray}
Without loss of generality, we may assume that $j<k$ and that $j$ is odd. Thus, $C_j \subset \partial U_j$. If $k$ is also odd then by the triangle inequality 
\begin{eqnarray}\label{kodd}
\sup \{ d_{A'}(z,w) \mid z \in C_j, w\in C_k \} &\le&
	\diam(U_j) + \dist(U_j,U_{j+2}) \\
	&& + \diam(U_{j+2}) +\cdots +\diam(U_{k-2}) \notag\\ 
	&& + \dist(U_{k-2},U_k) \notag
\end{eqnarray}
and, when $k$ is even,
\begin{eqnarray}\label{keven}
\sup \{ d_{A'}(z,w) \mid z \in C_j, w\in C_k \} &\le&
	\diam(U_j) + \dist(U_j,U_{j+2}) \\
	&& + \diam(U_{j+2}) +\cdots +\diam(U_{k-2}) \notag\\ 
	&& + \dist(U_{k-2},U_{k-1}) + \diam(U_{k-1}). \notag
\end{eqnarray}

We know that $\diam(U_j) = \cdots =\diam(U_k) \le h(\delta)$ from (\ref{TL-diameterU}) of Lemma~\ref{tunnellemma}, and that the distance between any two adjacent tunnels is the same, and that there are at most $n$ tunnels. Thus, in either case (\ref{kodd}) or (\ref{keven}) we have 
\be\label{diam-sup1}
\sup \{ d_{A'}(z,w) \mid z \in C_j, w\in C_k \}
	\le n\, h(\delta) + n\cdot \dist(U_j,U_{j+2}).
\ee
and by construction the distance between adjacent tunnels is 
\begin{eqnarray}
\dist(U_j,U_{j+2}) &\le& \diam(C_{j+1}) + \dist(C_{j+1},C_{j+2}) + \diam(C_{j+2}) \\
	&\le& \pi (\delta/2) + \delta + \pi (\delta/2) < 5\delta\label{diam-sup2}
\end{eqnarray}
since the balls $B(p_i,\delta/2)$ have constant sectional curvature $K$. 

Therefore, combining (\ref{diam-needsup}), (\ref{diam-sup1})  and (\ref{diam-sup2}) we conclude that 
\be
d_{A'}(x,y) \le 2(\delta + L(C)/(2n)+h(\delta))+ n\, h(\delta) + 5n \delta
\ee
which is the desired diameter estimate (\ref{sewn-curve-diam}).

We observe that by our choice of $\delta$ satisfying (\ref{sewn-curve-choiceofdelta}) and the fact that $h(\delta) = O(\delta)$ from Lemma~\ref{tunnellemma} we have that (\ref{sewn-curve-Hto0}) holds. 

Finally, observe that (\ref{inf-scal-1}) follows since Lemma~\ref{tunnellemma} shows that the tunnels $U_{i}$ have positive scalar curvature. 
\end{proof}

%%%%%%%%%%%%%%%%%%%%%%%%%%%%%%%%%%
% Pulled Metric Spaces
\section{Pulled String Spaces}\label{sect-pulled-string}

The following notion of a pulled string metric space captures the idea that if a metric space is a patch of cloth and a curve in the patch is sewn with a string, then one can pull the string tight, identifying the entire curve as a single point, thus creating a new metric space.  This notion was first described to the third author by
Burago when they were working ideas related to \cite{Burago-Ivanov-Area}. 
See Figure~\ref{fig-pulled-string}.

\begin{prop} \label{pulled-string}
The notion of a metric space with a pulled string is 
a metric space $(Y, d_Y)$ constructed from a metric space $(X,d_X)$ 
with a curve $C:[0,1]\to X$, so that
\be\label{pulled-string-def1}
Y = X \setminus C[0,1] \disjointunion \{p_0\}, \qquad p_0=C(0),
\ee
where for $x_i \in Y$ we have
\be\label{pulled-string-def2}
	d_Y(x, p_0) = \min \{ d_X(x, C(t)) : \, t\in [0,1]\}
\ee
and for $x_i \in X \setminus C[0,1]$  we have
\be\label{pulled-string-def3}
d_Y(x_1, x_2) =\min\left\{\, d_X(x_1, x_2), \min\{d_X(x_1, C(t_1)) + d_X(x_2, C(t_2)): \, t_i \in [0,1] \}\, \right\}. 
\ee
If $(X,d,T)$ is a Riemannian manifold then $(Y,d,\psi_\#T)$
is an integral current space whose mass measure is the 
Hausdorff measure on $Y$ and
\be\label{pulledcurveHm}
\mathcal{H}_Y^m(Y)=\mathcal{H}_X^m(X)-\mathcal{H}_X^m(K).
\ee
If $(X, d_X, T)$ is an integral current space then $(Y, d_Y, \psi_{\#}T)$ is
also an integral current space where $\psi: X\to Y$ such that $\psi(x)=x$ for all $x\in X\setminus C[0,1]$ and $\psi(C(t))=p_0$ for all $t\in [0,1]$.   So that
\be\label{massofpulledcurve}
\mass(\psi_{\#}T)=\mass(T)    
\ee
\end{prop}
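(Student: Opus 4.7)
First, I verify that $d_Y$ is a metric on $Y$. Non-negativity and symmetry are immediate from (\ref{pulled-string-def2})--(\ref{pulled-string-def3}); the triangle inequality $d_Y(x_1, x_3) \le d_Y(x_1, x_2) + d_Y(x_2, x_3)$ is checked case-by-case according to which alternative in the minimum defining each pairwise distance is attained. Each case either collapses to the triangle inequality in $(X, d_X)$ or is obtained by inserting $d_X(x_i, C(t_i)) \le d_X(x_i, x_j) + d_X(x_j, C(t_i))$ into one branch and reapplying the triangle inequality in $X$. The same observation shows that the quotient map $\psi: X \to Y$ (with $\psi|_{X \setminus C[0,1]}$ the inclusion and $\psi(C(t)) = p_0$) is $1$-Lipschitz, since $d_X(x_1, x_2)$ appears inside the minimum defining $d_Y$; hence it extends uniquely to a $1$-Lipschitz map $\bar\psi: \bar X \to \bar Y$.

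The key quantitative input is that $\psi$ is a \emph{local isometry} on $X \setminus C[0,1]$. Because $C([0,1])$ is compact, each $x \in X \setminus C[0,1]$ has $r_0 := d_X(x, C[0,1]) > 0$; for any $x_1, x_2 \in B(x, r_0/4)$, any shortcut has length $d_X(x_1, C(t_1)) + d_X(x_2, C(t_2)) \ge 2 \cdot (3r_0/4) > r_0/2 \ge d_X(x_1, x_2)$, so $d_Y = d_X$ on this ball. Applying the Ambrosio--Kirchheim push-forward theorem \cite{AK} to $\bar\psi$, one obtains $\psi_\# T \in \intcurr_m(\bar Y)$ with $\mass(\psi_\# T) \le \mass(T)$. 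For the reverse inequality I invoke the Ambrosio--Kirchheim area formula: the metric Jacobian $\jac(\bar\psi) = 1$ on the set where $\bar\psi$ is locally isometric, and the complement $C[0,1]$ has $\mathcal{H}^m$-measure zero (it is a Lipschitz image of $[0,1]$, hence $\mathcal{H}^1$-finite, and $m \ge 2$). Consequently $\mass(\psi_\# T) = \int_{X \setminus C[0,1]} \theta_T \, d\mathcal{H}^m = \mass(T)$, giving (\ref{massofpulledcurve}). The same local-isometry argument yields $\set(\psi_\# T) = Y$, so $(Y, d_Y, \psi_\# T)$ is indeed an integral current space.

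In the Riemannian case the weight satisfies $\theta_T \equiv 1$ on $X$, so the mass measure $\|\psi_\# T\|$ is the push-forward of the restriction of $\mathcal{H}^m_X$ to $X \setminus C[0,1]$. By covering $X \setminus C[0,1]$ with countably many open balls on each of which $\psi$ restricts to a bijective isometry, and using countable additivity of $\mathcal{H}^m$, I obtain $\mathcal{H}^m_Y(Y \setminus \{p_0\}) = \mathcal{H}^m_X(X \setminus C[0,1])$. Since $\mathcal{H}^m_Y(\{p_0\}) = 0$ and $\mathcal{H}^m_X(C[0,1]) = 0$ for $m \ge 2$, equation (\ref{pulledcurveHm}) follows and $\|\psi_\# T\|$ coincides with $\mathcal{H}^m_Y$. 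The principal obstacle throughout is ruling out any creation of mass at the identification point $p_0$; this is prevented by combining the local isometry of $\psi$ on the full-measure complement of $C[0,1]$ with the $\mathcal{H}^m$-negligibility of the curve itself, both consequences of the compactness of $C$.
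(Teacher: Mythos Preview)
Your proposal is correct and shares the paper's overall structure: case-by-case verification of the triangle inequality, $\psi$ is $1$-Lipschitz, and $\psi$ is a local isometry away from $C[0,1]$.  The implementations differ in two places.  For (\ref{massofpulledcurve}), the paper invokes the \emph{locality axiom} of metric currents---$T(f,\pi_1,\ldots,\pi_m)=0$ whenever some $\pi_i$ is constant on $\{f\neq 0\}$---to show directly that $\psi_\# T = \psi_\#(T\rstr\One_{X\setminus K})$ (since each $\pi_i\circ\psi$ is constant on $K$), and then computes the mass via Ambrosio--Kirchheim's supremum characterization; you instead invoke the area formula with $\jac\psi=1$ almost everywhere.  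For (\ref{pulledcurveHm}), the paper exhausts $Y\setminus\{p_0\}$ by the closed sets $C_j=\{y:d_Y(y,p_0)\ge 1/j\}$ and checks that $\psi^{-1}$ is an isometry on small-diameter covering sets of each $C_j$; your covering sketch has the same content but is less explicit about why overlapping balls cause no trouble.  The paper's route is more self-contained (no area-formula citation) and works verbatim for any compact $K$, while yours is quicker once that machinery is granted.  One caution: your assertion that $\set(\psi_\# T)=Y$ requires $p_0$ to have positive lower $\mathcal{H}^m$-density, which does hold for a curve in a manifold of dimension $m\ge 2$ (the tubular-neighborhood volume grows like $r^{m-1}$), but is not a consequence of the local-isometry argument alone and is worth stating separately.
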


\begin{figure}[htbp]
\begin{center}
\includegraphics[scale=0.3]{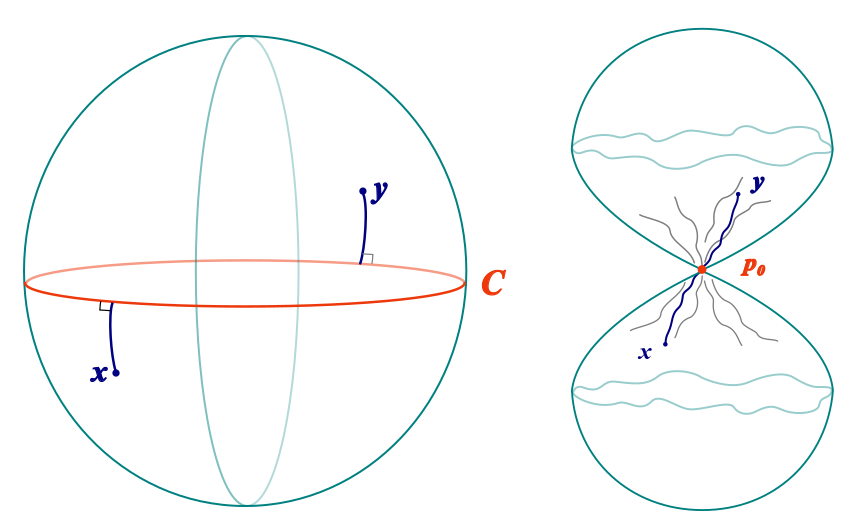}
\caption{A two sphere with the equator pulled to a point.}
\label{fig-pulled-string}
\end{center}
\end{figure}

We will in fact prove this proposition as a consequence of
two lemmas about spaces with arbitrary compact subsets pulled to
a point.  Lemma~\ref{pulled-subset-1} proves such a space is
a metric space and Lemma~\ref{pulled-subset-2} proves
(\ref{pulledcurveHm}) and (\ref{massofpulledcurve}).

\subsection{Pulled string spaces are metric spaces}

% pulled subset
\begin{lem} \label{pulled-subset-1}
Given a metric space $(X, d_X)$ and a compact set $K \subset X$
we may define a new metric space $(Y, d_Y)$ by pulling the set $K$ to a point $p_0 \in K$ by setting 
\be\label{pulled-set-def1}
Y := X \setminus K \disjointunion \{p_0\}, \qquad p_0 \in K \, \textrm{fixed}, 
\ee
and, for $x \in Y$, we have 
\be\label{pulled-set-def2}
d_Y(x, p_0) = \min\{ d_X(x, y) : \, y\in K\}
\ee
and, for $x_i \in Y \setminus \{p_0\}$, we have
\be\label{pulled-set-def3}
d_Y(x_1, x_2) =\min\left\{ d_X(x_1, x_2), \min\{d_X(x_1, y_1) + d_X(x_2, y_2): \, y_i\in K\} \right\}. 
\ee
\end{lem}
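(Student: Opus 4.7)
The plan is to verify directly that $d_Y$ satisfies the three axioms of a metric. Non-negativity and symmetry are immediate from the symmetric form of the formulas (\ref{pulled-set-def2}) and (\ref{pulled-set-def3}). The compactness of $K$ is crucial in two places, so the first step is to record that because $K$ is compact and $y \mapsto d_X(x,y)$ is continuous, each minimum in the definition is attained, and moreover $d_X(x,K) > 0$ whenever $x \notin K$.

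The next step is positive definiteness. If $x_1 = x_2$, then taking $y_1 = y_2 = p_0$ in (\ref{pulled-set-def3}) (or $y = p_0$ combined with $d_X(p_0,p_0)=0$ in the case one of them is $p_0$, after unfolding the definitions) yields $d_Y(x_1, x_2) = 0$. Conversely, if $x_1 \neq x_2$ in $Y$, I would split into the case where one is $p_0$ and the case where both lie in $X \setminus K$. In the first case, say $x_1 = p_0$, then $x_2 \in X \setminus K$, so $d_Y(x_1, x_2) = d_X(x_2, K) > 0$. In the second case, $d_X(x_1, x_2) > 0$ directly, and the second alternative in (\ref{pulled-set-def3}) is at least $d_X(x_1, K) + d_X(x_2, K) > 0$.

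The main step is the triangle inequality $d_Y(x_1, x_3) \le d_Y(x_1, x_2) + d_Y(x_2, x_3)$. The conceptually cleanest route is to recognize $d_Y$ as the quotient pseudometric for the equivalence relation on $X$ that crushes $K$ to a point and leaves $X \setminus K$ alone. Under this description, $d_Y([x_1],[x_3])$ is the infimum of $\sum d_X(a_i, b_i)$ over finite chains $a_1, b_1, a_2, b_2, \ldots, a_n, b_n$ with $a_1 \sim x_1$, $b_n \sim x_3$, and $b_i \sim a_{i+1}$. Using the triangle inequality in $X$, any chain with multiple passes through $K$ collapses to one with at most a single entry and exit in $K$, which is exactly what (\ref{pulled-set-def2}) and (\ref{pulled-set-def3}) encode. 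Triangle inequality for quotient pseudometrics then follows by concatenating chains.

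If one prefers to avoid the quotient language, I would verify the inequality by case analysis on whether $x_2$ equals $p_0$ or not, and in each case substitute the attained minimizers from the definitions of $d_Y(x_1, x_2)$ and $d_Y(x_2, x_3)$ into the right-hand side and apply the triangle inequality in $X$ to assemble an admissible pair $(y_1, y_3) \in K \times K$ (or a direct path) that witnesses the desired bound on $d_Y(x_1, x_3)$. The main obstacle is purely bookkeeping: each of $x_1, x_2, x_3$ may independently be $p_0$ or not, and for each configuration one must check that the min of the two alternatives in (\ref{pulled-set-def3}) behaves well. The quotient viewpoint handles all cases uniformly and, in my view, is the right way to organize the argument.
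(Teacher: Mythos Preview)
Your proposal is correct, and both the positive-definiteness argument and the ``quotient pseudometric'' route for the triangle inequality are valid. The paper, by contrast, does not invoke the quotient pseudometric at all: it proves the triangle inequality by a direct and somewhat lengthy case analysis. Specifically, it fixes minimizers $y_i \in K$ with $d_X(x_i,y_i)=d_X(x_i,K)$, splits into Case I ($d_Y(x_1,x_2) \ne d_X(x_1,x_2)$) versus Case II ($d_Y(x_1,x_2)=d_X(x_1,x_2)$), and within each case further splits into sub-cases (i), (ii), (iii) according to which of $d_Y(x_1,x_3)$ and $d_Y(x_2,x_3)$ equal the corresponding $d_X$ value; each of the six sub-cases is handled by a short chain of triangle inequalities in $X$.

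Your approach is more conceptual and avoids this bookkeeping entirely: once one checks that the general quotient pseudometric collapses to the two-term formula (\ref{pulled-set-def3}) (your ``chain collapse'' observation), the triangle inequality is free. The trade-off is that your argument assumes the reader knows the quotient pseudometric construction and its basic properties, whereas the paper's proof is completely self-contained. Note also that the identification of $d_Y$ with the quotient pseudometric is itself a small lemma that you have only sketched; the chain-shortening step deserves one or two explicit lines if you write this out in full.
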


%\br
%We note that the result concerning Hausdorff measures, (\ref{pulledsetHm}), holds for any $s \in (0,\infty)$ and not just for integer $s=m$ as stated. 
%\er

\begin{proof}
% proof that it is a metric space
We first prove that $(Y,d_Y)$ is a metric space. By definition, it is easy to see that $d_Y$ is non-negative and symmetric. To prove that $d_Y$ satisfies the axiom of positivity, assume $x_1=x_2$. Then either $x_i=p_0$, and $d_Y(x_1,x_2)=0$ by definitions (\ref{pulled-set-def1})--(\ref{pulled-set-def2}), or $x_i \ne p_0$ and $d_X(x_1,x_2)=0$ so by (\ref{pulled-set-def3}) we have $d_Y(x_1,x_2) \le d_X(x_1,x_2)=0$. 
Conversely, if $d_Y(x_1,x_2)=0$ then either $d_X(x_1,x_2)=0$ or 
\be\label{pulled-set-4}
	0= \min \{ d_X(x_1,y_1)+d_X(x_2,y_2) \mid y_i \in K \}.
\ee
In the first case, $x_1=x_2$ since $d_X$ is a metric, so assume otherwise. Then $d_X(x_1,x_2) \ne 0$ and (\ref{pulled-set-4}) holds. Being that (\ref{pulled-set-4}) is a sum of non-negative numbers, it follows that $d_X(x_1,y_1)=0$ and $d_X(x_2,y_2)=0$ for some $y_i \in K$. Hence, $x_i=y_i$ which is impossible by the definition of $Y$ unless $x_1=x_2=p_0$ which yields a contradiction. This proves that $d_Y$ satisfies positivity.  

% two observations

Next, let us note that by virtue of (\ref{pulled-set-def2}) and (\ref{pulled-set-def3}), we always have
\be\label{dYlessdX}
	d_Y(x_1,x_2) \le d_X(x_1,x_2), \qquad \forall\, x_1,x_2 \in Y
\ee
and
\be\label{dYsum}
\textrm{if }d_Y(x_1,x_2) \ne d_X(x_1,x_2) \implies 
d_Y(x_1,x_2) = d_X(x_1,y_1)+d_X(x_2,y_2).
\ee
for some $y_i \in K$. 

% Triangle Inequality

We now verify the triangle inequality: for any $x_1, x_2, x_3 \in Y$, we need to prove
\be\label{triangleineq}
d_Y(x_1,x_2) \le d_Y(x_1,x_3) + d_Y(x_3,x_2).	
\ee 

It will be convenient to define $y_i \in K$ such that
\be\label{defofyi}
	d_X(x_i,y_i) = \min \{ d_X(x_i,y) \mid y \in K\} 
		\textrm{  for  }i=1,2,3.
\ee

%case I
Assume in Case I that $d_Y(x_1,x_2) \ne d_X(x_1,x_2)$. Then by (\ref{dYsum}) and (\ref{defofyi}),  
\be\label{caseIsum}
d_Y(x_1,x_2) = d_X(x_1,y_1)+d_X(x_2,y_2).
\ee

We have three possibilities: (i) $d_Y(x_1,x_3) \ne d_X(x_1,x_3)$ and $d_Y(x_2,x_3) \ne d_X(x_2,x_3)$; (ii) $d_Y(x_1,x_3) = d_X(x_1,x_3)$ and $d_Y(x_2,x_3) = d_X(x_2,x_3)$; and (iii) (without loss of generality) $d_Y(x_1,x_3) \ne d_X(x_1,x_3)$ and $d_Y(x_2,x_3) = d_Y(x_2,x_3)$.

In Case I (i), we have  
\begin{eqnarray*}
d_Y(x_1,x_2) &=& d_X(x_1,y_1) + d_X(x_2,y_2) 
	\qquad \textrm{(by (\ref{caseIsum}))}\\
&\le& d_X(x_1,y_1) +d_X(x_3,y_3)+ d_X(x_2,y_2) + d_X(x_3,y_3)\\
&=& d_Y(x_1,x_3) + d_Y(x_2,x_3). 
	\qquad \textrm{(by assumption (i), (\ref{dYsum}), and (\ref{defofyi}))}
\end{eqnarray*}

In Case I (ii), we have
\begin{eqnarray*}
d_Y(x_1,x_2) &\le& d_X(x_1,x_2) \qquad \textrm{(by (\ref{dYlessdX}))}\\
&\le& d_X(x_1,x_3) + d_X(x_2,x_3) \\%\qquad \textrm{(by triangle ineq. for $d_X$)}\\
&=& d_Y(x_1,x_3) + d_Y(x_2,x_3). \qquad \textrm{(by assumption (ii))}
\end{eqnarray*}

In Case I (iii), we have
\begin{eqnarray}
d_X(x_2,y_2) &=& \min \{ d_X(x_2,K) \mid y \in K\} 
	\qquad \textrm{(by (\ref{defofyi}))} \notag\\
&\le& d_X(x_2,y_3) \notag\\
&\le& d_X(x_2,x_3) + d_X(x_3,y_3) \\
	%\qquad \textrm{(by triangle ineq. for $d_X$)}\notag\\
&\le& d_Y(x_2,x_3) + d_X(x_3,y_3) 
	\qquad \textrm{(by assumption (iii))}\label{caseI-iii}
\end{eqnarray}
so that
\begin{eqnarray}
d_Y(x_1,x_2) &=& d_X(x_1,y_1) + d_X(x_2,y_2) 
	\qquad \textrm{(by (\ref{caseIsum}))} \notag\\
&\le& d_X(x_1,y_1)+ d_Y(x_2,x_3) + d_X(x_3,y_3) 
	\qquad \textrm{(by (\ref{caseI-iii}))}\notag\\
&=& d_Y(x_1,x_3)+d_Y(x_2,x_3). \qquad \textrm{(by assumption (iii))}\notag
\end{eqnarray}
This proves the triangle inequality, (\ref{triangleineq}), in Case I. 
% case II
Next, we assume, in Case II, that $d_Y(x_1,x_2) = d_X(x_1,x_2)$.

Again, we have three possibilities: (i) $d_Y(x_1,x_3) \ne d_X(x_1,x_3)$ and $d_Y(x_2,x_3) \ne d_X(x_2,x_3)$; (ii) $d_Y(x_1,x_3) = d_X(x_1,x_3)$ and $d_Y(x_2,x_3) = d_X(x_2,x_3)$; and (iii) (without loss of generality) $d_Y(x_1,x_3) \ne d_X(x_1,x_3)$ and $d_Y(x_2,x_3) = d_Y(x_2,x_3)$.

In Case II (i), we have
\begin{align*}
	d_Y(x_1,x_2) &= d_X(x_1,x_2) \\
		&\le  d_X(x_1,y_1)+d_X(x_2,y_2) \qquad \textrm{(by (\ref{caseIsum}))} \notag\\		&\le d_X(x_1,y_1)+d_X(x_3,y_3)
			+d_X(x_2,y_2)+d_X(x_3,y_3) \\
		&= d_Y(x_1,x_3) + d_Y(x_2,x_3). \qquad
			\textrm{(by assumption (i), (\ref{dYsum}), and (\ref{defofyi}))}
\end{align*}

In Case II (ii), (\ref{triangleineq}) follows immediately from the triangle inequality for $d_X$. 

Finally, in Case II (iii), 
\begin{align*}
d_Y(x_1,x_2) &= d_X(x_1,x_2) \\
	&\le  d_X(x_1,y_1)+d_X(x_2,y_3) \qquad \textrm{(by (\ref{caseIsum}))} \notag\\
	&\le d_X(x_1,y_1)+d_X(x_2,x_3) + d_X(x_3,y_3) \\%\qquad trian\\
	&= d_Y(x_1,x_3) + d_Y(x_2,x_3), \qquad
			\textrm{(by assumption (iii), (\ref{dYsum}), and (\ref{defofyi}))}
\end{align*}
which completes the proof.
\end{proof}

\subsection{Hausdorff Measures and Masses of Pulled String Spaces}

\begin{lem} \label{pulled-subset-2}
If $(X, d_X, T)$ is an integral current space with a compact
subset $K \subset X$ then $(Y, d_Y, \psi_{\#}T)$ is
also an integral current space
where $(Y, d_Y)$ is defined as in Lemma~\ref{pulled-subset-1} and
where $\psi: X\to Y$ such that $\psi(x)=x$ for all $x\in X\setminus K$
and $\psi(q)=p_0$ for all $q\in K$.   In addition
\be\label{massofpulledset}
\mass(\psi_{\#}T)=\mass(T) - ||T||(K)   
\ee
If $(X,d_X,T)$ is a Riemannian manifold then $(Y,d_Y,\psi_\#T)$
is an integral current space whose mass measure is the 
Hausdorff measure on $Y$ and
\be\label{pulledsetHm}
\mathcal{H}_Y^m(Y)=\mathcal{H}_X^m(X)-\mathcal{H}_X^m(K).
\ee
\end{lem}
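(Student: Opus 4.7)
The plan is to deduce the lemma from Ambrosio--Kirchheim's pushforward theory \cite{AK}, exploiting two properties of $\psi$: it is $1$-Lipschitz from $(X,d_X)$ to $(Y,d_Y)$, and its restriction to $X\setminus K$ is a local metric isometry around every point. These are the two ingredients needed to transport $T$ to $Y$ while tracking the mass precisely.

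First I would verify that $\psi$ is $1$-Lipschitz using the formulas in \lemref{pulled-subset-1}: for $x_1,x_2 \in X \setminus K$ the inequality (\ref{dYlessdX}) gives $d_Y(\psi(x_1),\psi(x_2)) \le d_X(x_1,x_2)$; for $x_1 \in X\setminus K$ and $x_2 \in K$ the definition (\ref{pulled-set-def2}) yields $d_Y(\psi(x_1),p_0) = \min_{y\in K} d_X(x_1,y) \le d_X(x_1,x_2)$; and if both points lie in $K$ the distance is $0$. Extending $\psi$ continuously to $\bar\psi\colon \bar X \to \bar Y$ and applying the Ambrosio--Kirchheim pushforward theorem, $\psi_\# T$ is integer rectifiable and $\partial(\psi_\# T) = \psi_\#(\partial T)$ is integer rectifiable as well, so $\psi_\# T \in \intcurr_m(\bar Y)$. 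Hence $(\set(\psi_\# T), d_Y, \psi_\# T)$ is an $m$-dimensional integral current space sitting inside $(Y, d_Y)$.

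For the mass identity (\ref{massofpulledset}), I would decompose $T = T\rstr K + T\rstr (X\setminus K)$. The first summand satisfies $\psi_\#(T\rstr K) = 0$: the map $\psi|_K$ is constantly equal to $p_0$, so for any tuple $(f,\pi_1,\ldots,\pi_m)$ the functions $\pi_i\circ\psi$ are constant on $K$, their differentials vanish, and the Ambrosio--Kirchheim integral is zero. For the second summand, I would observe that for any $x \in X\setminus K$, choosing $r < d_X(x,K)/2$ ensures $d_X(x_i,K) > r$ whenever $x_1,x_2 \in B(x,r)$, and then (\ref{pulled-set-def3}) forces $d_Y(\psi(x_1),\psi(x_2)) = d_X(x_1,x_2)$. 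Thus $\psi|_{X\setminus K}$ is a local metric isometry, so the Ambrosio--Kirchheim mass formula (\ref{mass-current-space}) with pointwise Jacobian equal to one yields $\mass(\psi_\#(T\rstr(X\setminus K))) = \mass(T\rstr(X\setminus K)) = \mass(T) - \|T\|(K)$.

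In the Riemannian case, (\ref{mass-current-space}) gives $\|T\| = \mathcal{H}^m_X$ since $\theta_T \equiv 1$ and $\lambda \equiv 1$, so $\|T\|(K) = \mathcal{H}^m_X(K)$, and (\ref{pulledsetHm}) will follow from (\ref{massofpulledset}) once we identify the mass measure of $\psi_\# T$ with $\mathcal{H}^m_Y$. That identification reduces to checking that the bijection $\psi|_{X\setminus K}\colon X\setminus K \to Y\setminus\{p_0\}$ preserves Hausdorff measure, which follows from the same local-isometry statement via a countable cover of $X\setminus K$ by balls on which $\psi$ is distance-preserving; since $\{p_0\}$ has zero $\mathcal{H}^m$-measure for $m\ge 1$, we obtain $\mathcal{H}^m_Y(Y) = \mathcal{H}^m_X(X\setminus K) = \mathcal{H}^m_X(X) - \mathcal{H}^m_X(K)$. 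The main technical obstacle is the rigorous mass-preservation step for the Lipschitz pushforward in the general AK setting; the crucial point that makes the mass subtract \emph{exactly} $\|T\|(K)$, rather than merely bounding $\mass(\psi_\# T)\le \mass(T)$ via $\Lip(\psi)=1$, is the combination of the local isometry on $X\setminus K$ with the vanishing of $\psi_\#(T\rstr K)$.
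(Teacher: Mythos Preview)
Your proposal is correct and follows essentially the same route as the paper: verify that $\psi$ is $1$-Lipschitz, observe that the pushforward of $T\rstr K$ vanishes by locality (since $\pi_i\circ\psi$ is constant on $K$), and use that $\psi|_{X\setminus K}$ is a local metric isometry to match the mass of $\psi_\#(T\rstr(X\setminus K))$ with $\|T\|(X\setminus K)$. The only presentational difference is in the mass-preservation step you flag as the ``main technical obstacle'': where you appeal to an area-formula/Jacobian argument, the paper instead invokes the Ambrosio--Kirchheim characterization of mass as a supremum over Borel partitions (their Proposition~2.7) and observes that, because $\psi$ is literally the identity on $X\setminus K$, the partitions and $1$-Lipschitz test functions on $X\setminus K$ and on $Y\setminus\{p_0\}$ are in bijection, giving $\mass(\psi_\#(T\rstr(X\setminus K)))=\|T\|(X\setminus K)$ directly; for the Hausdorff-measure identity the paper spells out the same local-isometry covering argument you sketch, using exhaustions $C_j=\{y:d_Y(y,p_0)\ge 1/j\}$ and $D_j=\psi^{-1}(C_j)$.
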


\begin{proof}
Next, suppose that $(X,d_X,T)$ is an $m$-dimensional integral current space. We must show that  $(Y,d_Y,\psi_\# T)$ is an integral current space. We first observe that $\psi$ as defined in the statement of the proposition is a 1-Lipschitz function: for $x,y \in X\setminus K$, there is no ambiguity so we may view them as elements of $Y\setminus\{p_0\}$ and $d_Y(\psi(x),\psi(y))=d_Y(x,y) \le d_X(x,y)$ by definition of $d_Y$. Otherwise, we may assume, without loss of generality, that $x \in K$ and $y \notin K$. In this case, $d_Y(\psi(x),\psi(y))=d_Y(p_0,\psi(y))=d_Y(p_0,y)=\min\{d_X(z,y) : z \in K\} \le d_X(x,y)$, as $x\in K$.   Thus, $\psi_\# T$ is an integral current on $Y$ since $\psi$ is a 1-Lipschitz function and the well-known inequality 
\be
	\|\psi_\# T\| \le \Lip(\psi)^m \|T\|
\ee
implies that $\psi_\# T$ has finite mass because $T$ does. To show that $(Y,d_Y,\psi_\# T)$ is an integral current space there remains to show that it is completely settled, or $\psi_\# T$ has positive density at $p_0$. 

Let $f:Y \to \R$ be a bounded Lipschitz map and $\pi_j:Y \to \R$ be Lipschitz maps. Then
\begin{align*}
(\psi_\# T)(f,\pi_1,\ldots,\pi_m) 
&= T(f \circ \psi, \pi_1 \circ \psi, \ldots, \pi_m \circ \psi) \\
&= T(f\cdot \One_{X\setminus K} + f(p_0)\cdot \One_K, \pi_1 \circ \psi, \ldots, \pi_m \circ \psi)\\
&= T(f\cdot \One_{X\setminus K}, \pi_1 \circ \psi, \ldots, \pi_m \circ \psi)
	+ f(p_0)T(\One_K, \pi_1 \circ \psi, \ldots, \pi_m \circ \psi)\\
&= T(f\cdot \One_{X\setminus K}, \pi_1 \circ \psi, \ldots, \pi_m \circ \psi) + 0
\end{align*}
by locality since $\pi_i \circ \psi$ are constant on $\{\One_K \ne 0\}$ (see \cite{AK}) so
\begin{align*}
(\psi_\# T)(f,\pi_1,\ldots,\pi_m) 
&= T(f\cdot \One_{X\setminus K}, \pi_1 \circ \psi, \ldots, \pi_m \circ \psi)\\
&= (T \rstr \One_{X\setminus K})(f, \pi_1 \circ \psi, \ldots, \pi_m \circ \psi) \\
&= (T \rstr \One_{X\setminus K})(f\circ\psi, \pi_1 \circ \psi, \ldots, \pi_m \circ \psi)\\
&\qquad	\qquad \text{because $\psi(x)=x$ on $X\setminus K$,}\\
&= \psi_\#(T \rstr \One_{X\setminus K})(f,\pi_1,\ldots,\pi_m).
\end{align*}
So,  using the characterization of mass from \cite{AK}, (2.6) of Proposition 2.7, 
\begin{align*}
\mass(\psi_\# T)&=\mass(\psi_\#(T \rstr \One_{X\setminus K}))\\
&=\mass(T\rstr  \One_{X\setminus K})
\end{align*}
because $\psi(x)=x$ on $X\setminus K$, so since $\mass(\cdot)=\|\cdot\|$,
\begin{align*}
(\psi_\# T)(f,\pi_1,\ldots,\pi_m) 
&=\|T\rstr  \One_{X\setminus K}\|(X)\\
&=\sup \left\{ \sum_{j=1}^\infty |(T\rstr  \One_{X\setminus K})(\One_{A_j},\pi_1^j,\ldots,\pi_m^j)| \right\},
\end{align*}
where the supremum is taken over all Borel partitions $\{A_{j}\}$ of $X$ such that $X = \cup_j A_j$ and all Lipschitz functions $\pi_i^j \in \Lip(X)$ with $\Lip(\pi_i^j) \le 1$, then continuing
\begin{align*}
(\psi_\# T)(f,\pi_1,\ldots,\pi_m) 
&=\sup \left\{ \sum_{j=1}^\infty |T(\One_{X\setminus K} \cdot \One_{A_j},\pi_1^j,\ldots,\pi_m^j)| \right\}\\
%&\qquad \qquad X = \cup_j A_j \text{ Borel partition, } \pi_i^j \in \Lip(X), \Lip(\pi_i^j) \le 1 \}\\
&=\sup \left\{ \sum_{j=1}^\infty |T(\One_{\tilde{A}_j},\tilde{\pi}_1^j,\ldots,\tilde{\pi}_m^j)| \right\}, 
\end{align*}
where the second supremum is taken over all Borel partitions $\{\tilde{A}_{j}\}$ of $X\setminus K$ such that $X\setminus K = \cup_j \tilde{A}_j$ and all Lipschitz functions $\tilde{\pi}_i^j \in \Lip(X\setminus K)$ with $\Lip(\tilde{\pi}_i^j) \le 1$.
So, by the characterization of mass we have 
\begin{align*}
(\psi_\# T)(f,\pi_1,\ldots,\pi_m) 
&=\sup \left\{ \sum_{j=1}^\infty |T(\One_{\tilde{A}_j},\tilde{\pi}_1^j,\ldots,\tilde{\pi}_m^j)| \right\}\\
&=||T||(X\setminus K)\\
&= ||T||(X) - ||T|(K) \\
&=\mass(T)-||T||(K),
\end{align*}
which proves (\ref{massofpulledset}).

% Hausdorff measure on Y

Finally, assume that the $m$-dimensional integral current space $(X,d_X,T)$ is a Riemannian manifold. We show that the mass measure of  $(Y,d_Y,\psi_\#T)$ is the Hausdorff measure on $(Y,d_Y)$.  

We claim that 
\be\label{rstrHausd}
	\mathcal{H}_Y^m \rstr (Y\setminus \{p_0\}) = \mathcal{H}_X^m \rstr (X\setminus K).
\ee

First, observe that since $\psi$ is 1-Lipschitz,
\[
\mathcal{H}_Y^m (\psi(X\setminus K)) \le (\Lip(\psi))^{m} \mathcal{H}_X^m(X\setminus K), 
\]
by Proposition 3.1.4 on page 37 from \cite{Ambrosio-Tilli-AoMS}, hence
\[
\mathcal{H}_Y^m (Y\setminus \{p_0\}) \le \mathcal{H}_X^m(X\setminus K). 
\]
Thus, there remains to show the opposite inequality in (\ref{rstrHausd}).

Define sets
\[
C_{j} = \{ y \in Y \mid d_{Y}(y,p_{0}) \ge 1/j \}
\]
for each $j \in \N$. Then the $C_{j}$ are closed sets, $C_{j} \subset C_{j+1}$ and $Y\setminus \{p_{0}\} = \cup_{j \in \N} C_{j}$. So we may use Theorem 1.1.18 from \cite{Ambrosio-Tilli-AoMS}:
\be\label{hausd-limit1}
\mathcal{H}_Y^m (Y\setminus \{p_0\}) = \mathcal{H}_Y^m ( \cup_{j \in \N} C_{j}) =
\lim_{j \to \infty} \mathcal{H}_Y^m (C_{j}).
\ee

Consider, for each $j \in N$,  
\[
D_{j} = \psi^{-1}(C_{j}) = \{ x \in X \mid d_{X}(x,K) \ge 1/j \}
\]
which are closed in $X$, $D_{j} \subset D_{j+1}$, and $X \setminus K = \cup_{j \in \N} D_{j}$. Using Theorem 1.1.8 from \cite{Ambrosio-Tilli-AoMS} again: 
\be\label{hausd-limit2} 
\mathcal{H}_X^m (X\setminus K) = \mathcal{H}_X^m ( \cup_{j \in \N} D_{j}) =
\lim_{j \to \infty} \mathcal{H}_X^m (D_{j}).
\ee

Next, we claim that
\be\label{hausd-DCineq}
\mathcal{H}_X^m (D_{j}) \le \mathcal{H}_Y^m(C_{j}), \qquad j \in \N.
\ee
Fix $j$. Fix $\delta < \frac{1}{2j}$. Let $\{ E_{l} \}_{l \in \N}$ be a countable cover of $C_{j}$ with $\diam(E_{l})<\delta$, for all $l$. Then 
\be\label{hausd-distp0El}
\dist(E_{l},p_{0})>\frac{1}{2j}, \qquad l \in \N.
\ee
To see this, assume otherwise. Then since $\dist_{Y}(p_{0},E_{l})<\frac{1}{2j}$ and the definition of distance (as an infimum), there is $e \in E_{l}$ such that $d_{Y}(p_{0},e)<\frac{1}{2j}$. Now, we also know that $E_{l} \cap C_{j} \ne \emptyset$. So, there is $c \in C_{j} \cap E_{l}$. So, $d_{Y}(e,c) \le \diam_{Y}(E_{l}) < \delta < \frac{1}{2j}$. Also, by the triangle inequality, $d_{Y}(p_{0},c) \le d_{Y}(p_{0},e)+d_{Y}(e,c) < 1/j$. But this contradicts that $c \in C_{j}$ as by definition of $C_{j}$, $d_{Y}(p_{0},c)>1/j$. 

Next, we show that 
\be\label{hausd-diamequal}
\diam_{Y}(E_{l}) = \diam_{X}(\psi^{-1}(E_{l})),
\ee
i.e. $\psi^{-1}$ is an isometry when restricted to $\{E_{l}\}$. In fact, we prove 
\[
d_{X}(\psi^{-1}(a),\psi^{-1}(b)) = d_{Y}(a,b), \qquad \forall\, a,b \in E_{l}, j \in \N.
\]

Let $a,b \in E_{l}$. Then since $\diam(E_{l})<\delta<\frac{1}{2j}$ we have $d_{Y}(a,b) \le \diam_{Y}(E_{l}) < \delta < \frac{1}{2j}$, so
\be\label{hausd-abinEl1}
d_{Y}(a,b) < \frac{1}{2j}.
\ee
By definition of the distance $d_{Y}$, since $\psi^{-1}(a)=a$ and $\psi^{-1}(b)=b$,
\[
d_{Y}(a,b) = \min \big \{ d_{X}(a,b),\, \min \{\,  d_{X}(a,k_{1})+d_{X}(b,k_{2}) \mid k_{i} \in K\}\, \big \}.
\]
If $d_{Y}(a,b)=d_{X}(a,b)$, we're done. If not, then there exists $k_{1},k_{2} \in K$ so that
\be\label{hausd-abinEl2}
d_{Y}(a,b)=d_{X}(a,k_{1})+d_{X}(b,k_{2}).
\ee
By (\ref{hausd-distp0El}), 
\[
d_{Y}(a,p_{0}) \ge \frac{1}{2j} \qquad \text{and} \qquad d_{Y}(b,p_{0}) \ge \frac{1}{2j}
\] 
which implies
\[
\dist_{X}(a,K) \ge \frac{1}{2j} \qquad \text{and} \qquad \dist_{X}(b,K) \ge \frac{1}{2j}. 
\]
But then
\begin{align*}
\frac{1}{j} &\le \dist_{X}(a,K) + \dist_{X}(b,K) \\
	&\le d_{X}(a,k_{1})+d_{X}(b,k_{2}) \\
	&= d_{X}(a,b) \qquad \text{(by (\ref{hausd-abinEl2}))} \\
	&< \frac{1}{j}, \qquad \text{(by (\ref{hausd-abinEl1}))}
\end{align*}
which is a contradiction. 

Next, observe that $\{\psi^{-1}(E_{l})\}_{l \in \N}$ is necessarily a cover of $D_{j}$ so
\begin{align*}
\mathcal{H}_{X}^{m}(D_{j}) &\le \sum_{l=1}^{\infty} \omega_{m} \left( \frac{\diam_{X}(\psi^{-1}(E_{l}))}{2}\right)^{m} \\
	&= \sum_{l=1}^{\infty} \omega_{m} \left( \frac{\diam_{Y}(E_{l})}{2}\right)^{m}. 
		\qquad \text{(by (\ref{hausd-diamequal}))}\\
\end{align*}
Taking the infimum over all covers of $C_{j}$ with diameters less than $\delta$ gives
\[
\mathcal{H}_{X}^{m}(D_{j}) \le \mathcal{H}_{Y,\delta}^{m}(C_{j})
\]
then taking the limit as $\delta \to 0$ shows
\[
\mathcal{H}_{X}^{m}(D_{j}) \le \mathcal{H}_{Y}^{m}(C_{j})
\]
which proves the claim (\ref{hausd-DCineq}). 

To finish, we take the limit in (\ref{hausd-DCineq}) as $j \to \infty$ and use (\ref{hausd-limit1}) and (\ref{hausd-limit2}) to complete the proof. 
\end{proof}

%%%%%%%%%%%%%%%%%%%%%%%%%%%%%%%
%Sewing to Pulled Strings
\section{Sewn Manifolds converging to Pulled Strings}

In this section we consider a sequences of sewn manifolds being sewn increasingly tightly and prove they converge in the Gromov-Hausdorff and Intrinsic Flat sense to metric spaces with pulled strings.   

To be more precise, we consider the following sequences
of {\em increasingly tightly sewn} manifolds:

\begin{defn}\label{def-seq}
Given a single Riemannian manifold, $M^3$, with a curve, 
$A_0=C([0,1])\subset M$, with a tubular neighborhood
$A=T_a(A_0)$ which is Riemannian isometric to a tubular neighborhood of a compact set $V \subset \mathbb{S}^3_K$, in a 
standard sphere of constant sectional curvature $K$,
satisfying the hypothesis of Proposition~\ref{sewn-curve}.
We can construct its sequence of increasingly tightly sewn
manifolds, $N_j^3$, by applying Proposition~\ref{sewn-curve} taking 
$\epsilon=\epsilon_j \to 0$, $n=n_j \to \infty$, and $\delta=\delta_j\to 0$ to create each sewn manifold, $N^3=N_j^3$ and the edited regions $A_{\delta}'=A_{\delta_{j}}'$ which we simply denote by $A_{j}'$.   
This is depicted in Figure~\ref{fig-sewn-seq}.
Since these sequences $N_j^3$ are created using Proposition~\ref{sewn-curve}, they have positive scalar curvature whenever $M^3$ has positive scalar curvature,
and $\partial N_j^3=\partial M^3$ whenever $M^3$ has a
nonempty boundary.
\end{defn}

\begin{figure}[htbp]
\begin{center}
\includegraphics[width=1.5in]{sew1.png}
\includegraphics[width=1.5in]{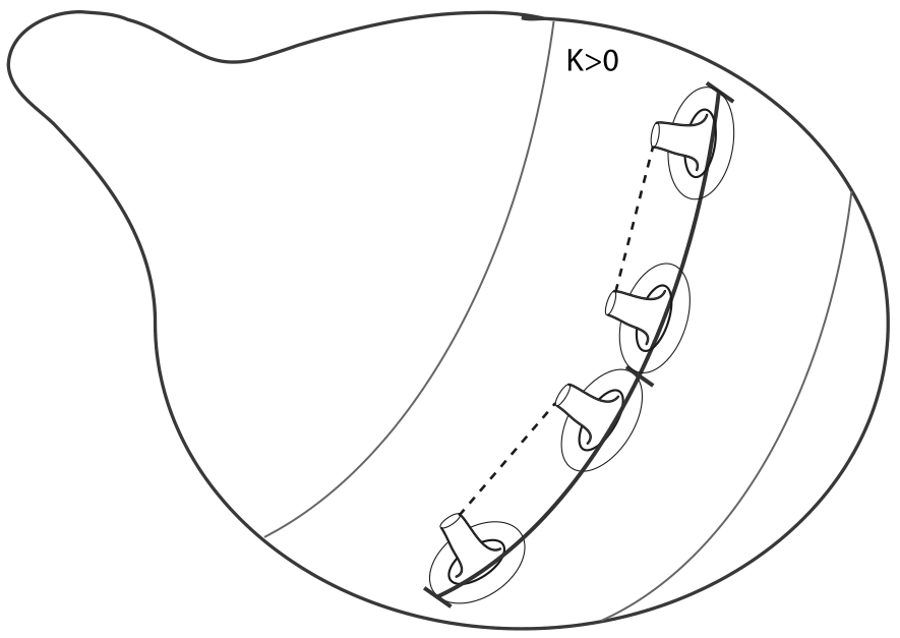}
\includegraphics[width=1.6in]{sew3.png}
\caption{A sequence of increasingly tightly sewn manifolds.}
\label{fig-sewn-seq}
\end{center}
\end{figure}

In this section we prove Lemma~\ref{L:almostiso}, Lemma~\ref{L:mGHconv} and Lemma~\ref{L:flatconv}, which immediately imply the following theorem:

\begin{thm}\label{thm-seq-sewn}
The sequence $N_j^3$ as in Definition~\ref{def-seq} converges in the
Gromov-Hausdorff sense 
\be
N_j^{3} \GHto N_\infty,
\ee
the metric measure sense
\be
N_j^{3} \mGHto N_\infty,
\ee
and the intrinsic flat sense
\be
N_j^{3} \Fto N_\infty,
\ee
where $N_\infty$ is the metric space created by pulling the
string, $A_0=C([0,1])\subset M$, to a point as in Proposition~\ref{pulled-string}.  
\end{thm}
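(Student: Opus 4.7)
The plan is to prove the three stated lemmas, all of which rest on a common almost-isometry $F_j : N_j^3 \to N_\infty$. I would define $F_j$ to be the canonical identification on $N_j^3 \setminus A_j' \cong M^3 \setminus A_{\delta_j}$, viewed as a subset of $N_\infty \setminus \{p_0\}$, and to collapse all of $A_j'$ to the pulled point $p_0 \in N_\infty$. Outside the edited region the two sides agree, while by Proposition~\ref{sewn-curve} the edited region has diameter $H(\delta_j) \to 0$ and vanishing volume.

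For Lemma~\ref{L:almostiso}, I would verify
\begin{equation}
| d_{N_\infty}(F_j(x), F_j(y)) - d_{N_j^3}(x, y) | \le \eta_j
\end{equation}
for some $\eta_j = O(H(\delta_j)) \to 0$, by case analysis on whether $x,y$ lie inside or outside $A_j'$. The key point is that a minimizing path in $N_j^3$ which enters and exits $A_j'$ can be replaced, up to an error of at most $\diam(A_j') \le H(\delta_j)$, by a path in $N_\infty$ passing through $p_0$, and conversely any path in $N_\infty$ through $p_0$ lifts to one in $N_j^3$ that enters $A_j'$. For $x,y \in A_j'$ the bound is immediate from the diameter estimate, and surjectivity of $F_j$ is clear from the construction. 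Applying Gromov's Theorem~\ref{almost-isom} then yields the Gromov-Hausdorff convergence part of Lemma~\ref{L:mGHconv}. The metric measure statement follows by pushing $\vol_{N_j^3}$ forward along $F_j$ and combining the volume estimates (\ref{sewn-curve-vol})--(\ref{sewn-curve-tubular}) with Lemma~\ref{pulled-subset-2}, which identifies the mass measure on $N_\infty$ as the Hausdorff measure with total mass $\vol(M^3)$, since $\mathcal{H}^3(C([0,1])) = 0$. The extra mass $\vol(A_j') \to 0$ that $F_{j*}\vol_{N_j^3}$ piles onto $p_0$ vanishes in the weak limit.

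The hard part is Lemma~\ref{L:flatconv}, the intrinsic flat convergence. I would first apply Theorem~\ref{GH-to-flat} to extract a subsequence converging in the intrinsic flat sense to an integral current space whose underlying metric space is contained in the Gromov-Hausdorff limit $N_\infty$ (or to the zero space). To rule out collapse to zero and identify the limit current structure as $\psi_\# T$ with $T$ the current on $M^3$, I would apply Theorem~\ref{Arz-Asc-Unif-Local-Isom} to the maps $F_j$: for any fixed $r > 0$ and any $x \in N_j^3$ with $d_{N_j^3}(x, A_j') > r$, $F_j$ restricts to a genuine isometry $B(x,r) \to B(F_j(x), r)$, so the local-isometry hypothesis is satisfied on the complement of any fixed neighborhood of $p_0$ in the limit. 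Theorem~\ref{B-W-BASIC} applied to a ball in the bulk of $M^3 \setminus C([0,1])$ prevents collapse to zero. The resulting local current-preserving isometry with $N_\infty$, combined with the lower semicontinuity of mass (\ref{mass-semicont}) and the mass bound $\mass(N_j^3) = \vol(N_j^3) \to \vol(M^3) = \mass(N_\infty)$ from Lemma~\ref{pulled-subset-2}, forces the flat limit to equal $N_\infty$ on its whole underlying set. Uniqueness of the limit upgrades subsequential convergence to convergence of the full sequence, completing the proof.
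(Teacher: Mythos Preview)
Your overall architecture matches the paper's closely: build maps $F_j:N_j^3\to N_\infty$, show they are almost isometries, deduce GH and metric measure convergence, then use the compactness Theorem~\ref{GH-to-flat} for the flat limit and identify it via the $F_j$. The GH and metric measure parts are essentially as in the paper, though the paper verifies metric measure convergence by checking $\mathcal H^3(B(p_j,r))\to\mathcal H^3(B(p,r))$ rather than by pushing forward measures.

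There is, however, a genuine gap in your flat-convergence argument. Your $F_j$ collapses $A_j'$ to $p_0$ and is the identity elsewhere; this map is discontinuous along $\partial A_j'$ (the jump has size $\delta_j$), and in particular it is not a local isometry on any ball of fixed radius $\delta$ that meets $A_j'$. Theorem~\ref{Arz-Asc-Unif-Local-Isom} requires the local-isometry hypothesis \emph{at every point} $x\in X_i$, not merely on the complement of a shrinking set, so it does not apply as stated. The paper fixes this by inserting an interpolation on the annulus $T_{\delta_j}(A_j')\setminus A_j'$ (Lemma~\ref{L:surjectivemaps}) to make $F_j$ genuinely Lipschitz with $\Lip(F_j)\le 4$, and then invokes Theorem~\ref{Flat-Arz-Asc} (the Arzel\`a--Ascoli theorem for uniformly Lipschitz maps) rather than Theorem~\ref{Arz-Asc-Unif-Local-Isom}. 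That produces a limit map $F_\infty:N\to N_\infty$; one then checks directly, using the almost-isometry estimate, that $F_\infty$ is distance preserving, and uses Theorem~\ref{B-W-BASIC} on balls in $N_j^3\setminus A_j'$ to show $F_\infty$ is onto $N_\infty\setminus\{p_0\}$, hence onto $N_\infty$ after metric completion. The mass argument you sketch then finishes the identification. So the fix is modest---interpolate to get a uniform Lipschitz bound and swap in Theorem~\ref{Flat-Arz-Asc}---but without it the cited theorem's hypotheses are not met.
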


In fact our lemmas concern more general sequences of manifolds which
are constructed from a given manifold $M$ and
{\em scrunch} a given compact set $K\subset M$ down to a point as follows:

\begin{defn}\label{def-seq-down}
Given a single Riemannian manifold, $M^3$, with a compact set, 
$A_0\subset M$.  A sequence of manifolds,
\be
N_j^3= (M^3 \setminus A_{\delta_j})\disjointunion A'_{\delta_j} 
\ee
is said to scrunch $A_0$ down to a point
if $A_{\delta}=T_{\delta}(A_0)$ and
$A'_\delta$ satisfies:
\be\label{sewn-curve-tubular'}
(1-\epsilon)\vol(A_{\delta}) \le \vol(A_{\delta}')\le \vol(A_{\delta})(1+\epsilon) 
\ee
and
\be\label{sewn-curve-vol'}
(1-\epsilon) \vol(M^3)\le \vol(N^3) \le \vol(M^3) (1+\epsilon)
\ee
and
\be \label{sewn-curve-diam'}
\diam(A_{\delta}')\le H
\ee
where $\epsilon=\epsilon_j \to 0$ and where $H=H_j \to 0$ and $2\delta_j<H_j$.
\end{defn}

Note that by Proposition~\ref{sewn-curve},
a sequence of increasingly tightly sewn manifolds sewn
along a curve $C([0,1])$ as in Definition~\ref{def-seq} is a sequence
of manifolds which scrunches $A_0=C([0,1])$ down to a point as 
in Definition~\ref{def-seq-down}.   So we will prove lemmas about
sequences of manifolds which scrunch a compact set and then apply them
to prove Theorem~\ref{thm-seq-sewn} in the final subsection of this section.

\subsection{Constructing Surjective maps to the limit spaces}

Before we prove convergence of the scrunched sequence of manifolds to the pulled thread space, we construct surjective maps from the sequence to the proposed limit space.

% lemma surj maps
\begin{lem}\label{L:surjectivemaps}
Given $M^3$ a compact Riemannian manifold (possibly with boundary) and a smooth embedded compact zero to three 
dimensional submanifold $A_0\subset M^3$ (possibly with boundary),
and $N_j$ as in Definition~\ref{def-seq-down}.
Then for $j$ sufficiently large
there exist surjective Lipschitz maps
\be\label{surjlipmaps4}
F_j: N_j^3 \to N_\infty \textrm{ with } \Lip(F_j) \le 4
\ee
where $N_\infty$ is the metric space created by taking $M^3$ and 
pulling $A_0$ to a point $p_0$
as in Lemmas~\ref{pulled-subset-1}-~\ref{pulled-subset-2}. 
\end{lem}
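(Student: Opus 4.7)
The plan is to construct $F_j$ piecewise using a three-piece decomposition of $N_j^3$, exploiting the fact that the annular collar $A_{2\delta_j} \setminus A_{\delta_j}$ lies in both $M^3 \setminus A_{\delta_j}$ and $N_j^3$ unchanged and serves as an interpolation region between the outer part (where $F_j$ coincides with the natural quotient $\psi : M^3 \to N_\infty$ that collapses $A_0$ to $p_0$) and the edited region $A'_{\delta_j}$ (which maps constantly to $p_0$).

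First I would set $F_j := \psi$ on $M^3 \setminus A_{2\delta_j}$, which is an isometric inclusion into $N_\infty$ since this region is disjoint from $A_0$, and $F_j \equiv p_0$ on $A'_{\delta_j}$. On the collar, taking $j$ large enough that $2\delta_j$ lies below the normal injectivity radius of the smooth embedded submanifold $A_0$, the nearest-point projection $c(x)$ onto $A_0$ and normal coordinates $x = \exp_{c(x)}(s(x)\,\hat v(x))$ with $s(x) = d_X(x, A_0) \in [\delta_j, 2\delta_j]$ are well defined. I would then set
\[
F_j(x) \;:=\; \psi\!\left(\exp_{c(x)}\!\bigl(2(s(x) - \delta_j)\,\hat v(x)\bigr)\right), \qquad x \in A_{2\delta_j} \setminus A_{\delta_j}.
\]
At $s(x) = 2\delta_j$ this reduces to $\psi(x) = x$, matching the outer piece, and at $s(x) = \delta_j$ the argument lies in $A_0$, giving $F_j(x) = p_0$, matching the constant on $A'_{\delta_j}$, so $F_j$ is continuous.

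To verify $\Lip(F_j) \le 4$ I would check each pair of regions. Within $M^3 \setminus A_{2\delta_j}$, comparing the two competing paths in $N_j$ (the direct one in $M \setminus A_{\delta_j}$ versus a shortcut through $A'_{\delta_j}$) to the corresponding paths in $N_\infty$ (direct versus through $p_0$), and using the hypothesis $2\delta_j < H_j$ from Definition~\ref{def-seq-down}, one obtains $d_{N_\infty}(\psi(x), \psi(y)) \le 2\,d_{N_j}(x, y)$. Within the collar the radial rescaling stretches normal distances by at most $2$ and tangential/angular distances by at most $1$, so composing with the $1$-Lipschitz $\psi$ yields a $2$-Lipschitz map. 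On $A'_{\delta_j}$ the map is constant. For a cross-region pair $x, y$ with $y \in A'_{\delta_j}$ and $x$ at normal distance $s \in [\delta_j, \infty)$ from $A_0$, one has $d_{N_j}(x, y) \ge s - \delta_j$, and $d_{N_\infty}(F_j(x), F_j(y))$ is bounded by $2(s - \delta_j)$ in the collar case and by $d_X(x, A_0)$ in the outer case, both yielding a ratio at most $2$. Taking the maximum gives $\Lip(F_j) \le 2 \le 4$.

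Surjectivity is immediate: the outer piece covers $N_\infty \setminus \psi(\overline{A_{2\delta_j}})$, the collar's radial rescaling covers $\psi(A_{2\delta_j}) \setminus \{p_0\}$, and $A'_{\delta_j}$ covers $\{p_0\}$. The main obstacle is handling submanifolds $A_0$ of arbitrary dimension zero through three in a uniform way: for codimension at least one the normal injectivity radius gives well-defined tubular coordinates once $\delta_j$ is small enough, while a three-dimensional open $A_0$ requires adapting the radial construction to the collar of $\partial A_0$, still using $s(x) = d_X(x, A_0)$ as the interpolation parameter. A secondary subtlety is that $d_{N_j}$ can be strictly smaller than $d_M$ on the outer region because of shortcuts through $A'_{\delta_j}$; the comparison above with the corresponding shortcut through $p_0$ in $N_\infty$ is where the condition $2\delta_j < H_j$ is used.
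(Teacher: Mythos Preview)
Your approach is essentially the same as the paper's: a three-piece decomposition into the outer region $N_j \setminus T_{\delta_j}(A'_j)$, the collar $T_{\delta_j}(A'_j)\setminus A'_j \cong A_{2\delta_j}\setminus A_{\delta_j}$, and the edited piece $A'_j$, with $F_j$ equal to the quotient map on the outside, a normal-direction radial rescaling on the collar, and the constant $p_0$ on $A'_j$. The paper then proves the global Lipschitz bound by establishing \emph{local} Lipschitz constants on each piece and integrating along a minimizing geodesic in $N_j$, rather than the pairwise case analysis you sketch; this local-to-global step avoids having to treat every cross-region combination separately (you omit, for instance, the outer--collar case).

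There is one genuine soft spot. Your claim that the collar map stretches tangential distances by a factor of at most $1$ is not correct as stated, and this is where the paper does real work. Writing $x=\exp_{c(x)}\bigl(s(x)\hat v(x)\bigr)$, the map sends $(c,s,\hat v)\mapsto (c,\,2(s-\delta_j),\,\hat v)$. In the flat model with linear $A_0$ the tangential stretch factor is $2(s-\delta_j)/s\le 1$, but on a curved manifold or for curved $A_0$ the metric coefficients in the $(c,\hat v)$ directions vary with $s$ (think of the inside of a round circle: the level circle at distance $2(s-\delta_j)$ is \emph{longer} than the one at distance $s$), so the tangential factor can exceed $1$. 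The paper handles this by invoking, for $\delta$ below a threshold $\delta_{A_0}$ depending only on the smooth embedded submanifold $A_0$, the estimate
\[
d_M\bigl(\exp_{p_1}(t_1v_1),\exp_{p_2}(t_2v_2)\bigr)\;\le\;2\,d_M\bigl(\exp_{p_1}(v_1),\exp_{p_2}(v_2)\bigr)+2|t_1-t_2|,
\]
and combines it with $|t_1-t_2|\le d_{N_\infty}(q_1,q_2)$ to get a local Lipschitz constant of $4$ on the collar, not $2$. Your argument can be repaired by invoking the same estimate (this is exactly where smoothness and compactness of $A_0$ and the hypothesis ``$j$ sufficiently large'' are used), but you should not expect to get below $4$ without a finer analysis. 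Finally, the condition $2\delta_j<H_j$ from Definition~\ref{def-seq-down} plays no role in this lemma; it is used later in the almost-isometry estimate.
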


Note that when $A_0$ is the image of a curve, $N_\infty$, is a pulled thread space as in Proposition~\ref{pulled-string}.

\begin{proof}
First observe that by the construction in Definition~\ref{def-seq-down}
there are maps
\be
P_j: M^3 \to N_\infty
\ee
which are Riemannian isometries on regions which avoid $A_0$
and map $A_0$ to $p_0$.  These define Riemannian isometries 
\be
P_j: N_j^3 \setminus A_j' \tilde{=}M^3 \setminus T_{\delta_j}(A_{0})
\to N_\infty^3 \setminus T_{\delta_j}(p_0).
\ee
In addition sufficiently small
balls lying in these regions are isometric to convex balls in $M^3$.

Observe also that for $\delta>0$ sufficiently small, the exponential
map:
\be
exp: \{(p,v):\,\,p\in A_0,\,\, v\in V_p \,\,|v|<2\delta\} \to T_{2\delta}(A_0)
\ee
is invertible where
\be
V_p=\{v\in T_pM:\,\, d_M(exp_p(tv),p)=d_M(exp_p(tv),A_0)\}.
\ee
Taking $\delta=\delta_{A_0}>0$ even smaller (depending on the submanifold $A_0$), 
we can guarantee that $\forall v_i\in V_p, |v_i|<2\delta_{A_0}, t_i\in(0,1)$ we have
\be \label{use-smooth}   
d_M(exp_{p_1}(t_1v_1), exp_{p_2}(t_2v_2))
\le  2 d_M(exp_{p_1}(v_1), exp_{p_2}(v_2)) + 2|t_1-t_2|.
\ee

This is not true unless $A_0$ is a smooth embedded compact submanifold with either no boundary or a smooth boundary.   
 
Define $F_j: N_j^3 \to N_\infty$ as follows:
\be\label{Fj1}
F_j(x)=P_j(x) \qquad \forall x \in N_j^3\setminus T_{\delta_j}(A_j')
\ee
and
\be\label{Fj2}
F_j(x)=p_0 \qquad \forall x\in A_j'.
\ee
Between these two regions we take
\be\label{Fj3}
F_j(x)=f_j(P_j(x)) \qquad \forall x \in T_{\delta_j}(A_j')\setminus A_j'
\ee
where $f_j: N_\infty \to N_\infty$ is a surjective map:
\be
f_j: Ann_{p_0}(\delta_j, 2\delta_j) \to B_{2\delta_j}(p_{0})\setminus \{p_0\}
\ee
which takes a point $q$ to 
\be
f_j(q)=\gamma_q\left( (d_{N_\infty}(p_0, q)- \delta_j) /\delta_j \right)
\ee
where $\gamma_q$ is the unique minimal geodesic from $\gamma_q(0)=p_0$
to $\gamma_q(1)=q$.   
Here we are assuming $\delta_j<\delta_{A_0}$.   So 
\be
d_{N_\infty}(p_0, P_j(x))=d_{M^3}(A_0, x)
\ee
and
\be
\gamma_q(t)=P_j(exp_{q'}(tv')) \textrm{ where } P_j(exp_{q'}(v'))=q.
\ee
In particular for $x\in \partial T_{\delta_j}(A_j')$,
\be
f_j(P_j(x))=\gamma_{P_j(x)}((2\delta_j-\delta_j)/\delta_j)=\gamma_{P_j(x)}(1)=P_j(x)
\ee
and for $x\in \partial A_j'$,
\be
f_j(P_j(x))=\gamma_{P_j(x)}((\delta_j-\delta_j)/\delta_j)=\gamma_{P_j(x)}(0)=p_0
\ee
so that $F_j$ is continuous.

We claim
\begin{eqnarray}
\Lip(F_j)&=&0 \textrm{ on } A_j' \\
\Lip(F_j) &\le& 4 \textrm{ on } T_{\delta_j}(A_j')\setminus A_j' \label{caseLip4}\\
\Lip(F_j)&=&1\textrm{ on } N_j \setminus T_{\delta_j}(A_j').
\end{eqnarray}
Only the middle part is difficult.   By
the definition of $d_{N_\infty}$ we have the following two
possibilities
\begin{eqnarray}
\textrm{Case I: \,} &&d_{N_\infty}(q_1,q_2)=
d_{M}(P_j^{-1}(q_1), P_j^{-1}(q_2)) \\
\textrm{Case II:} && d_{N_\infty}(q_1, q_2)=
d_M(P_j^{-1}(q_1), A_0)+
d_M(P_j^{-1}(q_2), A_0).
\end{eqnarray}
In Case II we see that the minimal geodesic
from $q_1$ to $q_2$ passes through $p_0$.  Since
$f_j(q_1)$ and $f_j(q_2)$ lie on this geodesic, we have
\be
d_{N_\infty}(f_j(q_1), f_j(q_2))\le d_{N_\infty}(q_1,q_2).
\ee
In Case I we apply (\ref{use-smooth}) with
\be
t_i=(d_{M}(P_j^{-1}(q_i),A_0)- \delta_j) /\delta_j
\ee
because $t_{i} \in (0,1)$ due to (\ref{caseLip4}) so that by the reverse triangle inequality
\begin{eqnarray}
|t_1-t_2|&=&|d_{M}(P_j^{-1}(q_1),A_0)-d_{M}(P_j^{-1}(q_2),A_0)|/\delta_j \\
&\le & d_{M}(P_j^{-1}(q_1),q_2)/\delta_j \\ 
&\le & d_{N_\infty}(q_1,q_2)
\end{eqnarray}
to see that
\begin{eqnarray}
d_{N_\infty}(f_j(q_1), f_j(q_2))&\le&
d_{M}(P_j^{-1}(f_j(q_1)), P_j^{-1}(f_j(q_2))) \\
&\le& 2 d_M(P_j^{-1}(q_1),P_j^{-1}(q_2))+ 2|t_1-t_2| \textrm{ by (\ref{use-smooth})},\\
&\le& 2 d_{N_\infty}(q_1,q_2) + 2|t_1-t_2| \textrm{ by Case I hypothesis,}\\
&\le & 4d_{N_\infty}(q_1,q_2).
\end{eqnarray}
This gives our claim.

We claim $\Lip(F_j)\le 4$ everywhere.   Given $x_1,x_2\in N_j^3$,
we have  a minimizing geodesic $\eta:[0,1]\to N_j$
such that $\eta(0)=x_1$ and $\eta(1)=x_2$.   Then
\be
d_{N_\infty}(F_j(x_1), F_j(x_2)) \le L(F_j \circ \eta).
\ee
Since $|(F_j\circ \eta)'(t)| \le 2|\eta'(t)|$ by our localized
Lipschitz estimates and because the function $F_j$ is
continuous, we are done.
\end{proof}

\subsection{Constructing Almost Isometries}

See Section~\ref{sect-GH-back} for a review of the Gromov-Hausdorff
distance.

%IMPROVE THIS TO APPLY IN PAPER 2 TO A VARIABLE SEQUENCE?
%BY MAKING IT AN ESTIMATE FOR A SINGLE j
\begin{lem}\label{L:almostiso}
Given $N_j^{3}$ as in Definition~\ref{def-seq-down},
the maps $F_j: N_j^3 \to N_\infty$ defined in (\ref{Fj1})-(\ref{Fj3}) in the proof of Lemma~\ref{L:surjectivemaps} are $H_j$-almost isometries with $\lim_{j\to \infty}H_j=0$.
Thus
\be\label{NjGHtoNinfty}
N_j \GHto N_\infty.
\ee
\end{lem}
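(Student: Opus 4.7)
The plan is to invoke Theorem~\ref{almost-isom}: surjectivity of $F_j$ is already proven in Lemma~\ref{L:surjectivemaps}, so it suffices to establish
\be
|d_{N_\infty}(F_j(p), F_j(q)) - d_{N_j}(p, q)| \le H_j \qquad \forall\, p, q \in N_j^3,
\ee
for some $H_j \to 0$. We will show this with $H_j = O(\delta_j + \diam(A_j'))$, which tends to zero by Definition~\ref{def-seq-down}.

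For the inequality $d_{N_\infty}(F_j(p), F_j(q)) \le d_{N_j}(p, q) + O(\delta_j + \diam(A_j'))$, take a minimizing path $\gamma$ in $N_j$ from $p$ to $q$. Recall from (\ref{Fj1})--(\ref{Fj3}) that $F_j = P_j$ is a Riemannian isometry on $N_j \setminus T_{\delta_j}(A_j')$. If $\gamma$ avoids $A_j'$, then $\gamma$ lies entirely in $N_j \setminus A_j'$, which is Riemannian isometric to $M \setminus A_{\delta_j}$, and $P_j \circ \gamma$ is a path of equal length in $N_\infty$; adjusting the endpoints from $P_j(p), P_j(q)$ to $F_j(p), F_j(q)$ via the interpolation $f_j$ costs at most $\delta_j$ each. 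If $\gamma$ enters $A_j'$, split it at the first entry $p_1$ and last exit $p_2$ on $\partial A_j' = \partial A_{\delta_j}$; since $P_j(p_1), P_j(p_2) \in \partial T_{\delta_j}(p_0)$, we can connect them in $N_\infty$ by a path of length $\le 2\delta_j$ through $p_0$. Concatenating with $P_j$ applied to the initial and final portions of $\gamma$ yields a path in $N_\infty$ of length at most $L(\gamma) + O(\delta_j)$.

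For the reverse inequality $d_{N_j}(p, q) \le d_{N_\infty}(F_j(p), F_j(q)) + O(\delta_j + \diam(A_j'))$, we use Lemma~\ref{pulled-subset-1}: either $d_{N_\infty}(F_j(p), F_j(q)) = d_M(a, b)$ with $a, b$ the lifts of $F_j(p), F_j(q)$ to $M$, or it equals $d_M(a, A_0) + d_M(b, A_0)$. In either case, we construct a corresponding path in $N_j$ using a minimizing geodesic (or concatenation thereof) in $M$: if the curve avoids $A_{\delta_j}$ it lifts directly to a path in $N_j \setminus A_j'$; otherwise we truncate at $\partial A_{\delta_j}$ and cross via $A_j'$, adding at most $\diam(A_j') + O(\delta_j)$ extra length by the diameter bound of Definition~\ref{def-seq-down}.

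Combining yields the almost isometry with $H_j \to 0$, and Theorem~\ref{almost-isom} gives $N_j^3 \GHto N_\infty$. The main technical hurdle is the case analysis for how minimizing paths interact with the thin annulus $T_{\delta_j}(A_j') \setminus A_j'$ where $F_j$ is only $4$-Lipschitz rather than isometric; geometric smallness of this region (width $\delta_j$, surrounding a set of diameter $\diam(A_j') \to 0$) ensures all correction terms are uniformly $O(\delta_j + \diam(A_j'))$.
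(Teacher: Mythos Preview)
Your proposal is correct and follows essentially the same strategy as the paper: invoke Gromov's Theorem~\ref{almost-isom} using the surjectivity from Lemma~\ref{L:surjectivemaps}, and verify the two-sided distance estimate by case analysis on whether a minimizing path meets the scrunched region $A_j'$ (respectively $B_{\delta_j}(p_0)$). The paper organizes the reverse inequality by taking a minimizing curve in $N_\infty$ and splitting according to whether it enters $B_{\delta_j}(p_0)$, whereas you invoke the explicit dichotomy of Lemma~\ref{pulled-subset-1} for $d_{N_\infty}$ and lift the corresponding $M$-geodesics; these are equivalent moves. Your handling of the thin annulus $T_{\delta_j}(A_j')\setminus A_j'$ via the $O(\delta_j)$ endpoint correction $d(F_j(p),P_j(p))\le \delta_j$ is in fact a bit more careful than the paper's line ``$L(\sigma)=L(F_j\circ\sigma)$,'' which tacitly uses $P_j$ rather than $F_j$ on that annulus.
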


\begin{proof}
Before we begin the proof recall that
\be
\diam(A_j')\le H_j \to 0
\ee
in (\ref{sewn-curve-diam'}) of Definition~\ref{def-seq-down}.

By Theorem~\ref{almost-isom} of Gromov, to prove (\ref{NjGHtoNinfty}) it suffices to show that $F_j$ are $H_j$-almost isometries.
To see this, examine $x,y\in N_j$ and join them by a minimizing curve $\sigma:[0,1]\to N_j$. 

If $\sigma[0,1]\subset N_j \setminus A'_j$, then 
by (\ref{Fj1}) we have
\be
L(\sigma)=L(F_j\circ \sigma) 
\ee
and so
\be
d_{N_j}(x,y)\ge d_{N_\infty}(F_j(x),F_j(y)).
\ee
Otherwise we have
\begin{eqnarray}
d_{N_j}(x,y) &\ge& d_{N_j}(x, A'_j) + d_{N_j}(y, A'_j) 
	\qquad T_{\delta_j}(A'_j) \text{ to } A'_j \\
&=& d_{N_\infty}(F_j(x), B_{\delta_j}(p_0)) + d_{N_\infty}(F_j(y), B_{\delta_j}(p_0))\\
&= &d_{N_\infty}(F_j(x), p_0)-\delta_j + d_{N_\infty}(F_j(y), p_0)-\delta_j \\
&\ge & d_{N_\infty}(F_j(x), F_j(y)) -2\delta_j. 
\end{eqnarray}

Next we join $F_j(x)$ to $F_j(y)$ by a minimizing curve $\gamma$.
If $\gamma[0,1]\subset N_\infty \setminus B_{\delta_j}(p_0)$ then there
is a curve $\eta$ such that $\gamma=F_j\circ\eta$ with
$\eta[0,1]\subset N_j \setminus A'_j$ 
and so by (\ref{Fj1})
\be
d_{N_j}(x,y)\le L(\eta)=L(\gamma) =d_{N_\infty}(F_j(x), F_j(y)).
\ee
Otherwise we have
\begin{eqnarray}
d_{N_j}(x,y) 
&\le & d_{N_j}(x, A_j')+\diam(A_j') + d_{N_j}(y, A_j') \\ 
%&\le & d_{N_j}(x, T_{\delta_j}(A_j'))+2 \delta_j+ \diam(A_j') + d_{N_j}(y, T_{\delta_j}(A_j')) \\ 
&\le & d_{N_j}(x, A_j')+H_{j}+ d_{N_j}(y, A_j') \\ 
&=& d_{N_\infty}(F_j(x), B_{\delta_j}(p_0)) + d_{N_\infty}(F_j(y), B_{\delta_j}(p_0)) + H_j\\ 
&\le & L(\gamma) +H_j =d_{N_\infty}(F_j(x),F_j(y)) + H_j. 
\end{eqnarray}
Hence, $F_j$ is an $H_j$ isometry since $2\delta_{j}<H_{j}$.
\end{proof}

\subsection{Metric Measure Convergence}

Recall metric measure convergence as reviewed in Section~\ref{sect-mm-back}.

%IMPROVE THIS TO APPLY IN PAPER 2 TO A VARIABLE SEQUENCE?
%BY MAKING IT AN ESTIMATE FOR A SINGLE j
\begin{lem}\label{L:mGHconv}
Given $N_j^3 \to N_\infty$ as in Lemma~\ref{L:surjectivemaps}
endowed with the Hausdorff measures, then we have metric measure convergence if $A_0$ has $\mathcal{H}^{3}$-measure $0$.
\end{lem}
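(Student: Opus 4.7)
The plan is to use the explicit almost-isometries $F_j$ from Lemma~\ref{L:almostiso} together with the hypothesis $\mathcal{H}^3(A_0)=0$ to establish weak convergence of pushed-forward Hausdorff measures in a common compact metric space $Z$. I would construct $Z$ by hand, equipping the disjoint union $N_\infty \sqcup \bigsqcup_j N_j$ with a metric glued via the maps $F_j$, so that the resulting isometric copies give distance-preserving maps $\varphi_j : N_j \to Z$ and $\varphi_\infty : N_\infty \to Z$ satisfying $d_Z(\varphi_j(x), \varphi_\infty(F_j(x))) \le H_j$ and $d_H^Z(\varphi_j(N_j), \varphi_\infty(N_\infty)) \to 0$. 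Metric measure convergence then reduces to weak convergence of $\mu_j := (\varphi_j)_* \mathcal{H}^3_{N_j}$ to $\mu_\infty := (\varphi_\infty)_* \mathcal{H}^3_{N_\infty}$ in $Z$.

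Next I would use $\mathcal{H}^3(A_0) = 0$ to control the masses. Since $A_0$ is compact of vanishing $\mathcal{H}^3$-measure, outer regularity gives $\vol(T_\eta(A_0)) \to 0$ as $\eta \to 0$. Combining with (\ref{sewn-curve-tubular'}) yields $\vol(A_j') \to 0$, and together with (\ref{sewn-curve-vol'}) and Lemma~\ref{pulled-subset-2} this gives $\vol(N_j) \to \vol(M) = \mathcal{H}^3(N_\infty)$, so in particular $\mu_j(Z) \to \mu_\infty(Z)$.

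For weak convergence, I would fix $f \in C(Z)$ and decompose
\[
\int_Z f\, d\mu_j \;=\; \int_{N_j \setminus T_{\delta_j}(A_j')} f \circ \varphi_j\, d\mathcal{H}^3 \;+\; \int_{T_{\delta_j}(A_j')} f \circ \varphi_j\, d\mathcal{H}^3.
\]
The second integral is bounded by $\|f\|_\infty\, \vol(T_{\delta_j}(A_j'))$, which vanishes since $\vol(A_j') \to 0$ and the annular piece $\vol(T_{\delta_j}(A_j') \setminus A_j') \le \vol(T_{2\delta_j}(A_0)) \to 0$. For the first integral, $F_j = P_j$ restricts to a Riemannian isometry from $N_j \setminus T_{\delta_j}(A_j')$ onto a subset of $N_\infty$, so changing variables and invoking uniform continuity of $f$ on the compact $Z$ together with $d_Z(\varphi_j(F_j^{-1}(q)), \varphi_\infty(q)) \le H_j$ transfers the integral to $\int_{N_\infty} f \circ \varphi_\infty\, d\mathcal{H}^3_{N_\infty} + o(1) = \int_Z f\, d\mu_\infty + o(1)$, using that $\mathcal{H}^3_{N_\infty}(\{p_0\}) = 0$ by Lemma~\ref{pulled-subset-2}.

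The main obstacle I anticipate is securing the compatibility between the abstract Gromov embedding and the concrete almost-isometry $F_j$: Gromov's embedding theorem by itself only asserts existence of $\varphi_j$, whereas the argument above requires the quantitative bound $d_Z(\varphi_j(x), \varphi_\infty(F_j(x))) \le H_j$. This is why one must construct $Z$ explicitly from the gluing rather than invoking the embedding theorem as a black box; once that is done, the remaining steps reduce to the measure-theoretic bookkeeping sketched above.
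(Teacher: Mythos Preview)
Your approach is correct but differs from the paper's. The paper verifies metric-measure convergence via the Cheeger--Colding criterion of convergence of ball volumes: for every $p\in N_\infty$ and every sequence $p_j \to p$ it shows $\mathcal{H}^3(B(p_j,r)) \to \mathcal{H}^3(B(p,r))$ for small $r$, treating the regular points $p \ne p_0$ (where the balls are eventually isometric to balls in $M$) and the singular point $p_0$ (where a sandwich $T_{r-H_j-r_j}(A_j') \subset B(p_{0,j},r) \subset T_{r+r_j}(A_j')$ together with the crucial $\vol(A_j')\to 0$ does the work) separately. You instead attack Fukaya's definition head-on, building the common space $Z$ from the almost-isometries and testing against continuous $f$; your decomposition exploits the same two facts---$F_j = P_j$ is a local Riemannian isometry outside $T_{\delta_j}(A_j')$, and the edited region has vanishing volume---but packages them into a single weak-limit computation rather than a ball-by-ball argument. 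Your route is arguably cleaner and avoids the ``almost every $r$'' quantifier; the paper's route yields the slightly finer information that ball volumes themselves converge. Both rest essentially on Lemma~\ref{pulled-subset-2} (for $\mathcal{H}^3_{N_\infty}\rstr(N_\infty\setminus\{p_0\}) = \mathcal{H}^3_M\rstr(M\setminus A_0)$) and on (\ref{sewn-curve-tubular'}) combined with $\mathcal{H}^3(A_0)=0$ to force $\vol(A_j')\to 0$.
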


\begin{proof}
Recall the maps $F_j: N_j^3 \to N_\infty$ defined in (\ref{Fj1})-(\ref{Fj3}) in the proof of Lemma~\ref{L:surjectivemaps}.   We
need only show that for almost every $p\in N_\infty$ and
for almost every $r<r_p$ sufficiently small we have
\be
\mathcal{H}^3(B(p,r))=\lim_{j\to \infty}\mathcal{H}^3(B(p_j,r))
\ee
where $F_j(p_j)=p$ and that for any sequence $p_{0j}\to p_0$
we have $r_0$ sufficiently small that for all $r<r_0$
\be
\mathcal{H}^3(B(p_0,r))=\lim_{j\to \infty}\mathcal{H}^3(B(p_{0j},r)).
\ee

In fact take any $p\neq p_0$ in $N_{\infty}$ and
choose 
\be
r<r_p<d_{N_\infty^3}(p,p_0)/2.
\ee
Then for $j$ large enough that $\delta_j< r_p$ we have
\be
B(p,r)\cap B(p_0,\delta_j)=\emptyset.
\ee
Thus
\be
B(p_j,r)\cap A_j'=\emptyset.
\ee
Thus by (\ref{Fj1}), $F_j$ is an isometry from $B(p_j,r) \subset N_j^3$
onto $B(p,r)\subset N_\infty$ 
and so we have
\be
\mathcal{H}^3(B(p,r))=\mathcal{H}^3(B(p_j,r))\qquad \forall r<r_p.
\ee

Next we examine $p_0$.  Observe that by (\ref{pulledsetHm})  
\be
\mathcal{H}_{N_\infty}^3(B(p_0,r)) = 
\mathcal{H}_{M}^3(T_r(A_0))-\mathcal{H}_{M}^3(A_0) =
\vol_M(T_r(A_0)\setminus A_0).
\ee
For any $p_{0,j}\to p_0$, we have by (\ref{surjlipmaps4}) 
\be
r_j=d_{N_j}(p_{0,j}, A_j')\le 4d_{N_\infty}(F_j(p_{0,j}), p_0) \to 0
\ee 
Thus
\be
B(p_{0,j},r) \subset T_{r+r_j}(A_j').
\ee
So
\begin{eqnarray}
\vol_{N_j}(B(p_{0,j},r))&\le& \vol_{N_j}(T_{r+r_j}(A_j'))\\
&\le & \vol_{N_j}(T_{r+r_j}(A_j')\setminus A_j') +\vol_{N_j}(A_j')\\
&=& \vol_M\left(T_{r+r_j+\delta_j}(A_0) 
                           \setminus T_{\delta_j}(A_0)\right)
          +\vol_{N_j}(A_j').
\end{eqnarray}        
  
Thus
\begin{eqnarray}
\limsup_{j\to\infty}\vol_{N_j}(B(p_{0,j},r))
&\le& \vol_M\left(T_{r}(A_0) \setminus A_0\right)
          +\limsup_{j\to\infty} \vol_{N_j}(A_j') \\
&=& \mathcal{H}^3(B(p_0,r))
\end{eqnarray}
since we claim that 
\be\label{volAj'to0}
\lim_{j\to\infty}\vol_{N_j}(A_j') = 0. 
\ee
This follows because $\epsilon_{j} \to 0$ and
 (\ref{sewn-curve-tubular'}) implies
\be
(1-\epsilon_j) \vol_M(A_{\delta_{j}}) \le 
\vol_{N_j}(A_j') \le (1+\epsilon_j)\vol_{M}(A_{\delta_{j}}).
\ee
The assumption that $\mathcal{H}^{3}(A_{0})=0$ then implies (\ref{volAj'to0}) after taking the limit. 

Similarly, we have for $j$ sufficiently large
\be
T_{r-H_j-r_j}(A_j') \subset B(p_{0,j},r).
\ee
So
\begin{eqnarray}
\vol_{N_j}(B(p_{0,j},r))&\ge& \vol_{N_j}(T_{r-H_j-r_j}(A_j'))\\
&= & \vol_{N_j}(T_{r-H_j-r_j}(A_j')\setminus A_j') +\vol_{N_j}(A_j')\\
&=& \vol_M\left(T_{r-H_j-r_j+\delta_j}(A_0) 
                           \setminus T_{\delta_j}(A_0)\right)
          +\vol_{N_j}(A_j').
\end{eqnarray}          
Thus
\begin{eqnarray}
\liminf_{j\to\infty}\vol_{N_j}(B(p_{0,j},r))
&\ge& \vol_M\left(T_{r}(A_0) \setminus A_0\right)
          +\liminf_{j\to\infty}\vol_{N_j}(A_j')\\
&=& \mathcal{H}^3(B(p_0,r)), \,\,\, \textrm{ by (\ref{volAj'to0})}
\end{eqnarray}
which completes the proof.
\end{proof}

\subsection{Intrinsic Flat Convergence}

For a review of intrinsic flat convergence 
see Section~\ref{sect-SWIF-back}.

%IMPROVE THIS TO APPLY IN PAPER 2 TO A VARIABLE SEQUENCE?
%BY MAKING IT AN ESTIMATE FOR A SINGLE j
\begin{lem}\label{L:flatconv}
Let $N_j^3 \GHto N_\infty$ be exactly as in Lemma~\ref{L:surjectivemaps} and Lemma~\ref{L:almostiso} 
where we assume $M$ is compact and we have a compact set,
$A_0\subset M\setminus \partial M$.
Then there exists an integral current space $N$ such 
that $\bar{N}$ is isometric to $N_\infty$ and 
\be
N_j \Fto N.
\ee
and when $A_0$ has Hausdorff measure $0$
\be \label{L:massconv}
\mass(N_j) \to \mass(N)=\mathcal{H}^3(N).   
\ee
When $A_0=C([0,1])$ then $N=N_\infty$.
\end{lem}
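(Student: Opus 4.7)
The plan has four parts: identify the candidate integral current space $N$ on $N_\infty$, construct a common isometric thickening $Z_j$ carrying both $N_j$ and $N_\infty$, bound the flat distance in $Z_j$ by a thin-cylinder filling over the Riemannian-isometric bulk region plus shrinking-collar error currents, and finally deduce both the mass convergence and the identification $\bar N = N_\infty$.

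First, let $\psi\colon M \to N_\infty$ denote the quotient map collapsing $A_0$ to $p_0$ and let $\lbrack M \rbrack$ be the integration current of $M$. Set
\[
N := \bigl(\operatorname{set}(\psi_\# \lbrack M \rbrack),\, d_{N_\infty},\, \psi_\# \lbrack M \rbrack\bigr),
\]
which by Lemma~\ref{pulled-subset-2} (applied with $K=A_0$) is an integral current space with $\mass(N) = \vol(M) - \mathcal{H}^3(A_0)$ and whose mass measure is the Hausdorff measure on its set.

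Second, recall from the proof of Lemma~\ref{L:surjectivemaps} that the surjective map $F_j$ coincides with the Riemannian isometry $P_j$ on the bulk $W_j := N_j \setminus T_{2H_j}(A'_j)$, mapping it isometrically onto a subset of $N_\infty$ that agrees with $W_\infty := N_\infty \setminus T_{2H_j}(p_0)$ up to a collar of volume $O(H_j)$. Form the thickened metric space
\[
Z_j = N_j \,\sqcup\, (N_j \times \lbrack 0,H_j\rbrack) \,\sqcup\, N_\infty
\]
with identifications $x \sim (x,0)$ and $(x,H_j) \sim F_j(x)$, equipped with the natural length metric. Then the inclusions $\iota_j\colon N_j \hookrightarrow Z_j$ and $\iota_\infty\colon N_\infty \hookrightarrow Z_j$ are distance preserving and $d_{Z_j}(\iota_j(x), \iota_\infty(F_j(x))) \le H_j$. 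The $1$-Lipschitz map $h(t,x) = (x,t)$ produces, via the Ambrosio-Kirchheim product-current machinery, a filling $V_j := h_\#\bigl(\lbrack 0,H_j\rbrack \times (\lbrack M\rbrack \rstr W_j)\bigr) \in \intcurr_4(Z_j)$ with $\mass(V_j) \le 2 H_j \vol(M)$ and boundary equal to
\[
\iota_{\infty\#}(F_j)_\#(\lbrack M\rbrack\rstr W_j) \,-\, \iota_{j\#}(\lbrack M\rbrack \rstr W_j)
\]
together with a lateral slice of mass $O(H_j)$. Since $F_j|_{W_j}$ is a Riemannian isometry, $(F_j)_\#(\lbrack M\rbrack \rstr W_j)$ and $\psi_\#\lbrack M\rbrack \rstr W_\infty$ differ on a collar of mass $O(H_j)$, while the remaining collar pieces $\lbrack M\rbrack \rstr (N_j \setminus W_j)$ and $\psi_\#\lbrack M\rbrack \rstr (N_\infty \setminus W_\infty)$ have masses bounded by $\vol(A'_{\delta_j}) + \vol(T_{2H_j+\delta_j}(A_0)\setminus A_0)$, which tends to $0$ by (\ref{sewn-curve-tubular'}), (\ref{sewn-curve-vol'}), and the fact that $A_0$ has shrinking tubular neighborhoods in $M$ (automatic when $\mathcal{H}^3(A_0)=0$, in particular when $A_0=C(\lbrack 0,1\rbrack)$). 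Combining these bounds yields $d_F^{Z_j}(\iota_{j\#}T_{N_j},\iota_{\infty\#}T_N) \to 0$, hence $N_j \Fto N$.

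Third, $\operatorname{set}(\psi_\#\lbrack M\rbrack) \subset N_\infty$ holds trivially, and $p_0$ is a limit of points outside $A_0$ in $d_{N_\infty}$ by compactness of $A_0$, giving $\bar N = N_\infty$. When $A_0 = C(\lbrack 0,1\rbrack)$ is a curve, Proposition~\ref{pulled-string} together with the tubular-neighborhood estimate $\|\psi_\#\lbrack M\rbrack\|(B(p_0,r))\sim \pi r^2 L(C)$ places $p_0$ in $\operatorname{set}(\psi_\#\lbrack M\rbrack)$, so $N=N_\infty$ as integral current spaces. When $\mathcal{H}^3(A_0)=0$, the mass conclusion follows from $\mass(N_j)=\vol(N_j)\to\vol(M)$ by (\ref{sewn-curve-vol'}) and $\mass(N)=\vol(M)$ by Lemma~\ref{pulled-subset-2}, giving $\mass(N_j)\to\mass(N)=\mathcal{H}^3(N)$.

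The main obstacle will be the rigorous construction and mass bound of the product-current filling $V_j$ in the abstract metric space $Z_j$, and the careful bookkeeping of the three shrinking error pieces (the lateral slice of $V_j$, the collar discrepancy between $(F_j)_\#(\lbrack M\rbrack\rstr W_j)$ and $\psi_\#\lbrack M\rbrack \rstr W_\infty$, and the two tubular-neighborhood collars). The assumption $A_0 \subset M\setminus\partial M$ is essential so that $\partial M = \partial N_j = \partial N_\infty$ remains disjoint from the filling region and the boundary current is unaffected throughout the argument.
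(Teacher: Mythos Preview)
Your approach is genuinely different from the paper's and, for the main case $A_0=C([0,1])$, essentially sound. The paper proceeds indirectly: it invokes the Gromov--Hausdorff convergence together with Theorem~\ref{GH-to-flat} to extract a subsequential intrinsic flat limit $N$, rules out $N={\bf 0}$ via Lemma~\ref{balls-converge}, and then \emph{identifies} $N$ with $N_\infty$ by passing the almost-isometries $F_j$ to a limit using Theorems~\ref{Flat-Arz-Asc} and~\ref{B-W-BASIC}. Your route is constructive: you name the target current $\psi_\#\lbrack M\rbrack$ up front and bound $d_{\mathcal F}(N_j,N)$ directly by building a common space $Z_j$ and a product-cylinder filling over the Riemannian-isometric bulk $W_j$, absorbing everything else into shrinking collar errors. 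The advantage of your method is that it yields an explicit rate $d_{\mathcal F}(N_j,N)=O(H_j)$ and avoids the Arzel\`a--Ascoli machinery entirely; the advantage of the paper's method is that it requires no delicate metric-space gluing and works uniformly for arbitrary compact $A_0$.

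There are two points where your argument needs repair. First, the glued space $Z_j=N_j\sqcup(N_j\times[0,H_j])\sqcup N_\infty$ with the identification $(x,H_j)\sim F_j(x)$ does \emph{not} in general make $\iota_\infty$ distance preserving: since $F_j$ is only an $H_j$-almost isometry, a path through the top of the cylinder can shortcut $d_{N_\infty}(y,y')$ by as much as $H_j$. The fix is standard---glue the cylinder only over the region $W_j$ where $F_j=P_j$ is a genuine Riemannian isometry (as in the Lakzian--Sormani construction), or replace the cylinder by the bridge metric $d(x,y)=\inf_{x'}\{d_{N_j}(x,x')+H_j+d_{N_\infty}(F_j(x'),y)\}$---but the claim as written is false. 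Second, your error bookkeeping requires $\vol(A'_{\delta_j})\to 0$, which by (\ref{sewn-curve-tubular'}) forces $\mathcal H^3(A_0)=0$; the lemma's first conclusion $N_j\Fto N$ with $\bar N\cong N_\infty$ is stated without that hypothesis. To recover the general case you would need an additional cone-filling of the collar current $T_{N_j}\rstr(N_j\setminus W_j)$ (its support has diameter $O(H_j)$, so the cone has mass $O(H_j)\cdot\vol(A'_{\delta_j})\to 0$), together with control of the resulting boundary slice. The paper's compactness argument sidesteps both issues.
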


\begin{proof}
By (\ref{sewn-curve-vol'}), we have uniformly bounded volume
\be
\vol(N_j^3) \le 2 \vol(M^3). 
\ee
Since $\partial N_j^3=\partial M^3$, we have
uniformly bounded boundary volume
\be
\vol(\partial N_j^3) =\vol(\partial M^3).
\ee
Combining this with Lemma~\ref{L:almostiso} and Theorem~\ref{GH-to-flat},
there exists an integral current space $N$ possibly $N={\bf{0}}$ such that a subsequence
\be\label{subseq1}
N_j \Fto N. 
\ee

We claim that $N\neq{\bf{0}}$.    If not, then by the final line in
Lemma~\ref{balls-converge}, for any sequence $p_j\in N_j$
and almost every $r$, $S(p_j,r) \Fto {\bf{0}}$.  However, taking $p_j$ and
$r$ such that 
\be
B(p_j,r)\subset N_j^3 \setminus A_j' 
\ee
we know there is some $p \in M^3$ with $B(p,r) \subset N_\infty\setminus\{p_0\}$ that $d_{\mathcal{F}}(S(p_j,r), S(p,r))=0$ for $p\in M^3$,
so $S(p_j,r) \Fto S(p,r) \neq {\bf{0}}$ which is a contradiction.

By Theorem~\ref{Flat-Arz-Asc}, we know that after possibly taking a subsequence 
we obtain a limit map 
\be
F_\infty: N \to N_\infty. 
\ee

We claim that $F_\infty$ is distance preserving.   Let $p,q\in N$.   By
Theorem~\ref{to-a-limit}, we have $p_j,q_j \in N_j$ converging to $p,q$
in the sense of Definition~\ref{point-conv}, i.e.
\be
d_{N_j}(p_j, q_j ) \to d_{N}(p,q).
\ee
Since the $F_j$ are $\epsilon_j$-almost isometries and $\epsilon_j \to 0$, we have
\be
d_{N_\infty}(F_j(p_j), F_j(q_j)) \to d_N(p,q).
\ee
By the definition of $F_\infty$ we have $F_j(p_j)\to F_\infty(p)$
and $F_j(q_j)\to F_\infty(q)$.  
Thus 
\be
d_{N_\infty}(F_\infty(p), F_\infty(q)) = d_N(p,q).
\ee

We claim that $F_\infty$ maps onto at least $N_\infty \setminus\{p_0\}$.   
Let $x \in N_\infty\setminus\{p_0\}$.   Since $F_j$ are
surjective, there 
exists $x_j\in N_j$ such that $F_j(x_j)=x$.   Since $x\neq p_0$,
we may define 
\be
r=\min\{d_{N_\infty}(x,p_0)/3, \textrm{ConvexRad}_M(x)\}  
\ee
where $\textrm{ConvexRad}_M(x)$ is the convexity radius about $x$ viewed
as a point in $M$.  
Then there exists $j$ sufficiently large such that $\delta_j<r$ so that
\be
B(x_j, r) \subset N_j \setminus  T_{\delta_j}(A_j').
\ee
Furthermore, these balls are isometric to the convex ball
$B(x,r)\subset M^3$.  

So
\be
d_{\mathcal{F}}(S(x_j,r), {\bf{0}}) =d_{\mathcal{F}}(S(x,r), {\bf{0}}) >0. 
\ee
Thus by Theorem~\ref{B-W-BASIC}
with $h_0=d_{\mathcal{F}}(S(x,r), {\bf{0}})$, and $N_j \Fto N$, 
a subsequence of the $x_j$ converges to $x_\infty \in N$.   
By the definition of $F_\infty$, we have $F_j(x_j) \to F_\infty(x_\infty) \in N_\infty$. 
But since $F_j(x_j)=x$ it follows that $F_\infty(x_\infty)=x$, hence $F_\infty$ maps onto $N_\infty\setminus p_0$.  

Taking the metric completions of $N$ and $N_\infty \setminus \{p_0\}$, 
we have an isometry
\be
F_\infty: \bar{N} \to N_\infty.
\ee

Since $N_j$ are Riemannian manifolds, 
\be
\mass(\Lbrack N_j \Rbrack)=\vol(N_j)=\mathcal{H}^3(N_j).
\ee   
By the lower semicontinuity of mass
and the metric measure convergence of $N_j$ to $N$ we know that
\be
\mass(\Lbrack N_{\infty} \Rbrack)\le \liminf_{j\to\infty}\mass(\Lbrack N_j \Rbrack)=\mathcal{H}^3(N).
\ee
On the other hand by (\ref{mass-current-space})
\be
\mass(\Lbrack N_{\infty} \Rbrack)\ge \mathcal{H}^3(N)
\ee
because almost every tangent cone is Euclidean and it has
integer weight everywhere.   Thus we have (\ref{L:massconv}).
In fact equality in these inequalities implies that $N$ has weight one everywhere.
  
Recall that the set of an integral current space only includes points of
positive density.  Since 
\be
\liminf_{r\to 0} \frac{\vol_{N_\infty}(B(p_0,r))}{r^3}
=\liminf_{r\to 0} \frac{\vol_{M}(T_r(A_0)\setminus A_0)}{r^3}
\ee
Thus $N$ is isometric to $N_\infty$ when this
liminf is positive and $N$ is isometric to $N_\infty\setminus \{p_0\}$
when this liminf is $0$.  When $A_0=C([0,1])$ is a curve
in a 3 dimensional Riemannian manifold we have
\be
 \liminf_{r\to 0} \frac{\vol_{M}(T_r(A_0)\setminus A_0)}{r^3}
 = \liminf_{r\to 0} \frac{\pi r^2 L(C)}{r^3} = + \infty >0.
\ee
Thus $N$ is isometric to $N_\infty$.

Thus $N$ does not depend on the subsequence in (\ref{subseq1}) and in fact the original sequence (given a consistent orientation)
converges in the intrinsic flat sense to $N$.   
\end{proof}

\subsection{The proof of Theorem~\ref{thm-seq-sewn}.}

\begin{proof}
In Proposition~\ref{sewn-curve} we show that given any
$\epsilon_j \to 0$ we can find $n_j \to \infty$ and $\delta_j \to 0$
so fast that $\delta_jn_j \to 0$ and we have $h(\delta_j)n_j \to 0$
as well such that the sewn manifolds:
\be
N_j^3 = (M^3 \setminus A_{\delta_j} )\disjointunion A_{\delta_j}',
\ee
satisfy:
\be
(1-\epsilon)\vol(A_{\delta}) \le \vol(A_{\delta}')\le \vol(A_{\delta})(1+\epsilon) 
\ee
and
\be
(1-\epsilon) \vol(M^3)\le \vol(N^3) \le \vol(M^3) (1+\epsilon)
\ee
and
\be 
\diam(A_{\delta}')\le H(\delta)= L(C)/n + (n+1)\, h(\delta)+(5n+2)\, \delta.
\ee
where
\be%\label{sewn-curve-Hto0}
\lim_{\delta\to 0} H(\delta)=0 \textrm{ uniformly for } K\in (0,1].
\ee
Thus we have a sequence $N_j$ which is scrunching a set
$A_0=C([0,1])$ to a point as in Definition~\ref{def-seq-down}

Lemma~\ref{L:almostiso} implies that
\be
N_j \GHto N_\infty
\ee
where $N_\infty$ is the pulled string space.
Lemma~\ref{L:mGHconv} implies we have metric measure to $N_\infty$
convergence because $A_0=C([0,1])$ has $\mathcal{H}^{3}$-measure $0$.

Lemma~\ref{L:flatconv} implies that
\be
N_j \Fto N_\infty
\ee
and 
\be %\label{L:massconv}
\mass(N_j) \to \mass(N_\infty)=\mathcal{H}^3(N),
\ee
completing the proof of Theorem~\ref{thm-seq-sewn}. 
\end{proof}

%%%%%%%%%%%%%%%%%%%%%%%%%%%%%%%%%%
% Examples Constructed from Spheres
\section{Sewing a Sphere to Obtain our Limit Space}

Here we construct the specific example of a sequence of manifolds with
positive scalar curvature that converges to a limit space which fails
to have generalized nonnegative scalar curvature as discussed in the 
introduction.  More specifically:

% sphere geodesic example
\begin{example}\label{sphere-geod}
We define a sequence $N_j^3$ of manifolds with positive scalar curvature
constructed from the standard $\mathbb{S}^3$ sewn along a closed geodesic $C:[0,1]\to \mathbb{S}^3$
with $\delta=\delta_j \to 0$ as in Proposition~\ref{sewn-curve}.   Then
by Theorem~\ref{thm-seq-sewn} we have
\be\label{sphere-geod-conv}
N_j^3 \mGHto N_\infty \textrm{ and } N_j^3 \Fto N_\infty
\ee
where $N_\infty$ is the metric  space created by taking the
standard sphere and pulling the geodesic to a point as in Proposition~\ref{pulled-string}.  By Lemma~\ref{bad-point} below we see that
at the pulled point $p_0\in N_\infty$ we have (\ref{sphere-vol-bad}).  Thus we
have produces a sequence of three dimensional manifolds with positive
scalar curvature converging to a limit space which fails to satisfy generalized scalar curvature defined using limits of volumes of balls as in (\ref{sphere-vol}).
\end{example}

\begin{rmrk}
Note that with $\delta_j \to 0$, the neck in the center of the tunnels
has a rotationally symmetric minimal surface whose area is $\le 4\pi\delta_j^2$
which converges to $0$.  So this sequence, and in fact any sewn sequence 
created as in Definition~\ref{def-seq}, has $\textrm{MinA}(N_j) \to 0$.
\end{rmrk}

\begin{lem}\label{bad-point}
At the pulled point $p_0\in N_\infty$ of Example~\ref{sphere-geod}
we have 
\be\label{sphere-vol-bad'}
\lim_{r\to0} \left( \frac{\vol_{\E^3}(B(0,r)) - \vol_{N_\infty}(B(p_0,r))}{r^2 \vol_{\E^3}(B(0,r))}\right)=-\infty.
\ee
\end{lem}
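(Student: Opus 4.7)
The plan is to reduce the computation to a standard tubular neighborhood volume estimate in $\mathbb{S}^3$ and then to show that the $r^2$ growth of $\vol(T_r(C))$ dominates the $r^5$ denominator.

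First, I would identify the metric ball $B(p_0,r) \subset N_\infty$. By the definition \eqref{pulled-string-def2} of $d_Y$ applied to $X = \mathbb{S}^3$ and $K = C([0,1])$, a point $x \in \mathbb{S}^3 \setminus C$ lies in $B(p_0,r)$ if and only if $d_{\mathbb{S}^3}(x, C) < r$. Hence, as sets,
\be
B(p_0,r) = \bigl(T_r(C)\setminus C\bigr) \cup \{p_0\}.
\ee
Applying the mass/Hausdorff measure identity \eqref{pulledsetHm} from Lemma~\ref{pulled-subset-2} localized to this ball (and noting $\mathcal{H}^3(\{p_0\}) = 0$ and $\mathcal{H}^3(C) = 0$ because $C$ is a $1$-dimensional smooth curve in a $3$-manifold), I get
\be
\vol_{N_\infty}(B(p_0,r)) = \mathcal{H}^3_{\mathbb{S}^3}\bigl(T_r(C)\setminus C\bigr) = \vol_{\mathbb{S}^3}(T_r(C)).
\ee

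Next, I would compute $\vol_{\mathbb{S}^3}(T_r(C))$ using Fermi coordinates along the closed geodesic $C$. Since $\mathbb{S}^3$ has constant sectional curvature $1$ and $C$ is a closed geodesic of length $L(C)$, the normal exponential map is a diffeomorphism from the normal disk bundle of radius $r < \pi/2$ onto $T_r(C)$, and the Jacobian gives the classical formula
\be
\vol_{\mathbb{S}^3}(T_r(C)) = L(C)\,\pi \sin^2(r).
\ee
In particular $\vol_{\mathbb{S}^3}(T_r(C)) = \pi L(C)\, r^2 + O(r^4)$ as $r \to 0^+$. (If one prefers to avoid the exact formula, the lower bound $\vol(T_r(C)) \ge (1-\epsilon)\pi L(C)\, r^2$ for all small $r$ is enough, and follows from the fact that an $r$-disk transverse to $C$ has area $(1-\epsilon)\pi r^2$ for $r$ small.)

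Finally, I would plug this into the ratio. Using $\vol_{\mathbb{E}^3}(B(0,r)) = \tfrac{4}{3}\pi r^3$, the numerator satisfies
\be
\vol_{\mathbb{E}^3}(B(0,r)) - \vol_{N_\infty}(B(p_0,r)) = \tfrac{4}{3}\pi r^3 - \pi L(C)\,r^2 + O(r^4),
\ee
and the denominator is $r^2 \vol_{\mathbb{E}^3}(B(0,r)) = \tfrac{4}{3}\pi r^5$, so
\be
\frac{\vol_{\mathbb{E}^3}(B(0,r)) - \vol_{N_\infty}(B(p_0,r))}{r^2 \vol_{\mathbb{E}^3}(B(0,r))}
= \frac{1}{r^2} - \frac{3\,L(C)}{4\,r^3} + O(r^{-1}).
\ee
The negative $r^{-3}$ term dominates, yielding the limit $-\infty$ and establishing \eqref{sphere-vol-bad'}.

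The only nontrivial step is justifying the asymptotic $\vol_{\mathbb{S}^3}(T_r(C)) \sim \pi L(C)\, r^2$, which is routine via Fermi coordinates on a smooth closed geodesic; everything else is bookkeeping.
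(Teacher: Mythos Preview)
Your proposal is correct and follows essentially the same route as the paper: identify $B(p_0,r)\subset N_\infty$ with the tubular neighborhood $T_r(C)\subset\mathbb{S}^3$ via the pulled-string Hausdorff measure identity, use the asymptotic $\vol_{\mathbb{S}^3}(T_r(C))\sim \pi L(C)\,r^2$, and observe that this $r^2$ term in the numerator over the $r^5$ denominator forces the limit to $-\infty$. The paper states the asymptotic as a limit and plugs in $L(C)=2\pi$ directly, while you give the exact Fermi-coordinate formula $L(C)\pi\sin^2 r$, but the argument is the same.
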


\begin{proof}
First, observe that 
\begin{eqnarray}
\vol_{N_\infty}(B(p_0,r)) &=& \mathcal{H}_{N_\infty}^3\left(B(p_0,r)\right) \\
	&=& \mathcal{H}_{N_\infty}^3\left(B(p_0,r) \setminus \{p_0\}\right) \\
	&=& \mathcal{H}_{\mathbb{S}^3}^3\left(\,T_r(C([0,1]))\,\right).
\end{eqnarray}
Since $C([0,1])$ is a closed geodesic of length $2\pi$
in a three dimensional sphere, we have
\be
\lim_{r\to 0}\,\, \frac{\,\mathcal{H}_{\mathbb{S}^3}^3\left(\,T_r(C([0,1]))\,\right)\,}{  2\pi (\pi r^2)}\,\,\,=\,\,\,1.
\ee
Thus
\be
\lim_{r\to 0} \frac{\vol_{\E^3}(B(0,r)) - \vol_{N_\infty}(B(p_0,r))}{r^2 \vol_{\E^3}(B(0,r))}
=\lim_{r\to 0} \frac{(4/3)\pi r^3 -  2\pi (\pi r^2)}{(4/3)\pi r^5 }
=-\infty
\ee
as claimed.
\end{proof}

%%%%%%%%%%%%%%%%%%%%%%%%%%%%%%%%%%%%%%%%%%%
% APPENDIX 

\section{Appendix: Short tunnels with Positive Scalar Curvature\\
by Jorge Basilio and J\'ozef Dodziuk}

There is a deep connection between the geometry of Riemannian manifolds $M^n$ with positive scalar curvature and surgery theory. The subject began with the surprising discovery by Gromov and Lawson \cite{Gromov-Lawson-tunnels} (for $n \ge 3$) and Schoen and Yau \cite{Schoen-Yau-tunnels} that a manifold obtained via a surgery of codimension 3 from a manifold $M^n$ with a metric of positive scalar curvature may also be given a metric with positive scalar curvature. 
%The methods of Schoen and Yau \cite{Schoen-Yau-tunnels} rely on the regularity theory of minimal surfaces and therefore require the dimension of their manifolds less than or equal to seven. On the other hand, the techniques used by Gromov and Lawson \cite{Gromov-Lawson-tunnels} are geometric and thus work for any dimension greater than or equal to three (and surgeries of codimension at least 3). 
The key to the tunnel construction of \cite{Gromov-Lawson-tunnels} is defining a curve $\gamma$ which begins along the vertical axis then bends upwards as it moves to the right and ends with a horizontal line segment, cf.\ Figure \ref{F:gammak} below. The tunnel then is  the surface of revolution determined by $\gamma$. %See Figure~\ref{F:tunnels}. 
We note that the ``bending argument'' has attracted some attention (See \cite{Rosenberg-Stolz-PSC2001}).

As the goals of the surgery theory were topological in nature Gromov and Lawson did not estimate with diameters or volumes of these tunnels. Indeed, the tunnels they constructed may be thin but long (See \cite{Gromov-Lawson-tunnels2}). 
To build sewn manifolds we need tunnels with diameters shrinking to zero as the size of the original balls decreases to zero (see (\ref{TL-diameterU}), (\ref{TL-diameterUorderdelta}) (\ref{TL-tunnellengthtozero})). Therefore, we prove Lemma~\ref{tunnellemma} to obtain a refinement of the Gromov and Lawson construction  showing the existence of tiny (in sense of (\ref{TL-volumeestU})) and arbitrarily short tunnels with a metric of positive scalar curvature.

% BEGIN PROOF
%
\begin{proof}[Proof of Lemma~\ref{tunnellemma}] % ADDED! Doesn't seem to show up in document ...

To aid the reader, we provide a summary of our proof and introduce additional notation.

% OUTLINE OF PROOF
\subsection{Outline of Proof of Lemma~\ref{tunnellemma}.}\label{subsection:outline} 
To aid the reader, we provide a summary of our proof and introduce additional notation.

% STEP 1 outline		
\noindent 
\textbf{Step 1: Setup and notation.} 
Let $\epsilon>0$ be given. We shall specify $0<\delta_0<\delta/2$ below.

Given that $B_1=B(p_1,\delta/2) \subset M^3$ has constant sectional curvature $K>0$, we may choose coordinates so that it is realized as a hypersurface of revolution. This is also true for $B(p_1,\delta_0) \subset B_1$ for $0<\delta_0<\delta/2$ centered at the same $p_1$. Thus, $B(p_1,\delta_0)$ is a hypersurface of revolution $U_{\gamma_0}'$ with the induced metric in $\R^4$ determined by revolving a segment of the circle $\gamma_0$ in the $(x_0,x_1)$-plane about the $x_0$-axis. 
We set things up so that the vertical $x_1$-axis corresponds to boundary points of $B(p_1,\delta_0)$. 
We then proceed as Gromov and Lawson to deform $\gamma_0$ away from vertical axis bending it upwards as we move to the right and ending with an arbitrarily short horizontal line segment. We call this curve $\gamma$, cf.\ Figure~\ref{F:gammak}. 
%\begin{figure}[h]
%	\begin{center}
%		\includegraphics[scale=.2]{gamma.png} % FIGURE
%	\end{center}
%	\caption{The curve $\gamma$.}\label{F:finalcurvegamma}
%\end{figure}
%
The curve $\gamma$ begins exactly as $\gamma_0$ so that we may attach the corresponding hypersurface onto the larger $B(p_1,\delta/2)$ in a natural way. 
We  do exactly the same for $B_2 \subset M^3$ and identify the two hypersurfaces along their common boundary, i.e the ``tiny neck,'' forming $2U_{\gamma}'=U_{\gamma}' \disjointunion U_{\gamma}'$. 
We then define the tunnel $U=U_\delta$ by
\be\label{defofU}
U=U_\delta= ((B(p_1,\delta/2) \setminus B(p_1,\delta_0)) \disjointunion (2U_{\delta_0,\gamma}') \disjointunion ((B(p_2,\delta/2) \setminus B(p_2,\delta_0)),
\ee 
where $0<\delta_0<\delta/2$ and $U_{\gamma}'=U_{\delta_0,\gamma}'$ is a modified Gromov-Lawson tunnel, see Figure~\ref{Fig.SY-GL-tunnel}.
%

%\begin{figure}[h]
%	\begin{center%}
%		\includeg%raphics[scale=.15]{tunnelsUprime.png}  % FIGURE
%	\end{center}
%	\caption{The tunnels $U'$.}\label{F:tunnelsU'}
%\end{figure}
%
The boundary of $2U_{\gamma}'$ is isometric to a collar of $B(p_1,\delta_0) \disjointunion B(p_2,\delta_0)$ so we may smoothly attach it to form (\ref{defofU}). \\

% STEP 2 outline
\noindent 
\textbf{Step 2: Construction of the curve $\gamma$, Part 1: $C^{1}$.}
In this step, we construct a $C^1$, and piecewise $C^\infty$, curve $\gamma$. The construction is based on the  bending argument of Gromov and Lawson and uses the fundamental theorem of plane curves i.e.\ the fact that a smooth curve parametrized by arclength is uniquely determined by its curvature, the initial point and the initial tangent vector.
Care must be taken to ensure that the induced metric on $U_{\gamma}'$ maintains positive scalar curvature and that the legth of $\gamma$ is controlled to yield diameter and volume estimates of Lemma \ref{tunnellemma}. 
This step is quite technical and forms the heart of the proof.\\
%

% STEP 3 outline - NEW! C^{1} to C^{\infty}
\noindent 
\textbf{Step 3: Construction of the curve $\gamma$, Part 2: from $C^{1}$ to $C^{\infty}$.}\\ 
In this step we show how to modify the curve constructed in Step 2 to obtain a smooth curve $\bar{\gamma}$ while maintaining all the required features. The modification is elementary and,
once it is completed, we rename $\bar{\gamma}$ back to $\gamma$.\\

% STEP 4 outline 
\noindent 
\textbf{Step 4: Diameter estimates (\ref{TL-diameterU}), (\ref{TL-tunnellengthtozero}) and
volume estimates (\ref{TL-volumeestU}), (\ref{TL-volumeestN}).}\\ 
This is very straightforward since the previous steps give an estimate of the length of the tunnel.

We remark here that the choice of $\delta_0$ is used only to insure that the tunnel $U'$ (see Figure~\ref{Fig.SY-GL-tunnel}) has sufficiently small volume.

%\begin{proof}[Proof of Lemma~\ref{tunnellemma}] % ADDED! Doesn't seem to show up in document ...

% STEP 1: SET-UP AND DEFINITION OF U
\subsection{Step 1 of the Proof.} 
We now set-up our notation further, describe $U$ explicitly in terms of a special curve $\gamma$, and state the important curvature formulas needed in later steps. The  construction of $\gamma$ is done in the next two sub-sections (Steps 2 and 3). 

As mentioned in subsection~\ref{subsection:outline}, because we assume that $B_1$ and $B_2$ have constant sectional curvature $K$ we may work directly in Euclidean space $\R^4$ with coordinates
$(x_0,x_1,x_2,x_3)$ and its standard metric. Let $\gamma(s)$ be a curve in the $(x_0,x_1)$-plane, parametrized by arc-length, written as $\gamma(s)=(x_0(s),x_1(s))$. This curve specifies a hypersurface in $\R^4$ (by rotating $\gamma$ about the $x_0$-axis), 
\be\label{defofU'} 
U' = U_{\gamma}' = \{\, (x_0,x_1,x_2,x_3 \in \R^4 \mid x_0=x_0(s),\,x_1^2+x_2^2+x_3^2 = x_1(s)^2\, \},
\ee 
which we endow with the induced metric. 
Our curve $\gamma$ will always lie in the first quadrant of $(x_0,x_1)$-plane and will be parametrized so that $x_0(s)$ will be increasing. 
We denote by $\theta(s)$ the angle between the horizontal direction and the upward normal vector, and by $\varphi(s)$ the angle between the horizontal direction and the tangent
vector to $\gamma$. 
\begin{figure}[h]
	\begin{center}
	\includegraphics[scale=.2]{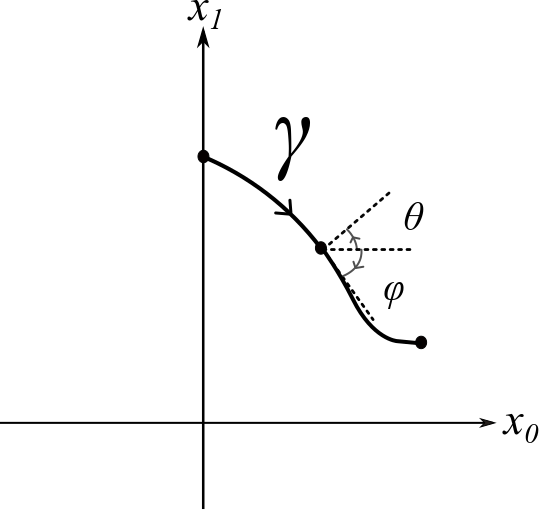} % FIGURE
	\end{center}
	\caption{The curve $\gamma$. }\label{F:gammak}
\end{figure}

We remark that the two angle functions are related by
\be 
\theta(s)=\varphi(s) + \frac{\pi}{2}, \label{theta-phi}
\ee 
see Figure~\ref{F:gammak}. In particular, $\varphi\in (-\pi/2,0]$.

Denote by $k(s)$ the geodesic curvature of $\gamma$. It is a signed quantity so that $\gamma$
bends away from the horizontal axis if $k(s)>0$ and toward the $x_0$-axis when $k(s)<0$. If $\gamma(s_0)=(c,d)$ and $\varphi_0=
\varphi(s_0)$ then (cf.~Theorem 6.7, \cite{Gray-CurveSurfBook}) the function $k(s)$ determines $\gamma$ by the formulae
\be\label{curvaturegivescurve1} 
\varphi(s)=\varphi_0+\int_{s_0}^s k(u)\, du
\ee
and
\be\label{curvaturegivescurve2} 
\gamma(s) = \left(
c+\int_{s_0}^s \cos(\varphi(u))\, du,\, d+\int_{s_0}^s \sin(\varphi(u))\, du
\right).
\ee 

Our aim is to define a function $k(s)$ so that the resulting threefold of revolution $U'$ has
positive scalar curvature. The formula on page 226 of \cite{Gromov-Lawson-tunnels} for $n=3$ gives a relation between the two curvatures. Namely
	\be\label{tunnellemma-neck-scalarcurv}
	\Scal_{U'}(s) = \frac{2\, \sin \theta(s)}{x_1(s)} \left[ \frac{\sin \theta(s)}{x_1(s)} -2 k(s) \right] % SINCE USING THIS FORMULA NEED TO USE THETA AS GL DEFINE IT!!!!!!
	\ee
	where $\Scal_{U'}(s)$ is the scalar curvature of the induced metric on $U'$ and $k$ is the geodesic curvature of $\gamma$. 
	In particular, the formula holds if $\gamma$ is the intersection of the 3-sphere around the origin with the $(x_0,x_1)$-plane in which case $k$ is a negative constant.

We begin defining our curve $\gamma(s)$ so that $\gamma(0)$  corresponds to a point on 
$\partial B(p_1,\delta_0)$ and $\gamma(s)$, for small values of $s\in [0,s_0]$, parametrizes the intersection of $B(p_1,\delta_0)$ with the $(x_0,x_1)$-plane. In particular, for small $s$,
$k(s)\equiv -\sqrt{K}$. We choose $s_0 =\delta_0/2$ and then extend (in Step 2, Subsection \ref{subsection:step2}) the function $k(s)$ to a suitable step function on a longer 
interval $[0,L]$ so that the resulting curve $\gamma(s)$ has the following properties.\\
	
% new
% Roman numerals in enumerate, so can reference as Roman numberal too
\renewcommand\labelenumi{(\Roman{enumi})}
\renewcommand\theenumi\labelenumi
\begin{enumerate}
	\item\label{propofgamma1} 
	%Write $\gamma(s)=(x_0(s),x_1(s))$, that is, $x_i(s)$ will denote the coordinate functions of $\gamma$. 
	The graph of $\gamma$ lies strictly in the first quadrant, beginning at 
	$p_I=\gamma(0)=(0,\cos(-\pi/2+\delta_0)/\sqrt{K})$ 
	and ending at $p_F=\gamma(L)$ 
	with $x_0(L)>0$, $x_1(L)>0$, where $L$ is the length of the curve. Moreover, a point of $\gamma$ moves to the right when $s$ increases.

	\item\label{propofgamma2}  
	Let $\theta(s)$ be the angle between the upward pointing normal to $\gamma$ and the $x_0$-axis. The curve $\gamma$ ends at $p_F$ with $\theta(L)=\pi/2$  and has $\theta=\pi/2$
	(so that it is a horizontal line segment) for an arbitrarily small interval $(L',L]$ (where $L'<L$). 
	
	\item\label{propofgamma3} 	
	
	The curve $\gamma$ has constant curvature $-\sqrt{K}$ near 0 so that the boundary of $U$ has a neighborhood that is isometric to a collar  of $B_1 \cup B_2$.

	\item\label{propofgamma4}  
	The curvature function $k(s)$ satisfies
	\be\label{tunnellemma-scalarcurv-conditiononk}
	k(s) < \frac{\sin(\theta(s))}{2x_{1}(s)} \qquad s \in [0,L],
	\ee
	so that the expression on the right-hand side of (\ref{tunnellemma-neck-scalarcurv}) is positive for all $s \in [0,L]$.  We remark here
	that in certain stages of the construction $k(s)$ will have discontinuities so that $\Scal_{U'}(s)$ is not defined but this will
	cause no difficulties.
	
	\item\label{propofgamma5}
	The length of $\gamma$, $L$, is $O(\delta_{0})$.
\end{enumerate}

Due to properties \ref{propofgamma1} and \ref{propofgamma2} of $\gamma$ above, we may smoothly attach  two copies of $U'$ along their common boundary at $s=L$ to define $2U'=U_{\gamma}' \disjointunion U_{\gamma}'$ 
and then, using property \ref{propofgamma3}, attach $2U'$ to form $U$ as in (\ref{defofU}). 

In the next step, we construct a piecewise $C^1$ curve $\gamma$ in the $(x_0,x_1)$-plane which satisfies properties \ref{propofgamma1} through \ref{propofgamma5}. Then, in Step 3, we modify the construction once more to produce a smooth curve, $\bar{\gamma}$, with these same properties.

% STEP 2: CONSTRUCTION OF THE CURVE GAMMA, C^{1}
\subsection{Step 2 of the Proof: Construction of $\gamma$, Part 1: $C^{1}$.}\label{subsection:step2}
%
% begin new construction 

As above, let $s_0=\delta_0/2$  and let $q_{0}=(a_{0},b_{0})$ be the coordinates of the point $\gamma(s_{0})$ that is already defined. By choosing $\delta_0$ sufficiently small we can assume that the tangent vector to $\gamma$ at $s=s_{0}$ is nearly vertical and is pointing downward at $s=s_{0}$.  We also have $k(s)\equiv -\sqrt{K}$ on $[0,s_0]$.\\

%\ref{propofgamma1} % first Quad
%, \ref{propofgamma2}, \ref{propofgamma3} % angle increases and ends at theta=pi/2
%\ref{propofgamma4} and \ref{propofgamma5}, % scal>0
%\ref{propofgamma6} % L to 0

% inductive extensions 
We will use a finite induction  to define a sequence of extensions of $\gamma$ over intervals $[s_{i},s_{i+1}]$, with $s_{i}<s_{i+1}$ for a finite number of steps $0 \le i \le n$, where $n=n(\delta_{0})$ is the number of steps required such that properties \ref{propofgamma1},  \ref{propofgamma3},  \ref{propofgamma4}, and \ref{propofgamma5} all hold at each extension. We denote by $(a_{i},b_{i})$ the coordinates of the point $\gamma(s_{i})$ for $0 \le i \le n$. 

Let us first choose the curvature function $k(s)$ of $\gamma(s)$ on the first extended interval $[s_{0},s_{1}]$. Observe that equation (\ref{tunnellemma-scalarcurv-conditiononk}) limits the amount of positive curvature allowed for $k(s)$. 
In fact, we choose $k(s)$ to be the constant $k_{1}>0$ over the interval $[s_{0},s_{1}]$ based only the initial data at $s_{0}$
\be\label{k1choice} % CHOICE K1
	k_{1} = \frac{\sin(\theta(s_{0}))}{4b_{0}}>0,
\ee
where $\theta(s_{0})=\frac{\pi}{2}+\varphi(s_{0}) = \delta_{0}-\sqrt{K}s_{0}>0$ and $b_{0}=x_{1}(s_{0})$.  
Note that constant positive curvature means that $\gamma(s)$ moves along the arc of a circle of curvature $1/\sqrt{k_{1}}$ bending away from the origin. 

We verify that property \ref{propofgamma4} holds with our choice of $k_{1}$ in (\ref{k1choice}). 
From (\ref{curvaturegivescurve1}), we see that $\varphi(s)$ is an increasing function with range in the interval $(-\pi/2,0)$, hence $\theta(s)$ is also increasing by (\ref{theta-phi}). Moreover, from (\ref{curvaturegivescurve1}) and (\ref{curvaturegivescurve2}), we see that the $x_{1}$-coordinate function is decreasing on the interval $(s_{0}, s_{1})$ since $x_{1}'(s)=\sin(\varphi(s))<0$.  Thus, the expression on the right-hand side of  (\ref{tunnellemma-scalarcurv-conditiononk}), $\sin(\theta(s))/(2x_{1}(s))$, is an increasing function on $(s_{0},s_{1})$ so that
\be\label{RHScurvatureconditionincreasing}
\frac{\sin(\theta(s_{0}))}{2x_{1}(s_{0})} \le \frac{\sin(\theta(s))}{2x_{1}(s)} 
\qquad s \in [s_{0},s_{1}].
\ee
Since  $k(s)\equiv k_{1}$ is constant it follows that the property \ref{propofgamma4}  holds for $s \in [s_{0},s_{1}]$. 

Next, we choose the length of the extension $\Delta s_{1} = s_{1} -s_{0}$, so that properties \ref{propofgamma1} and \ref{propofgamma5} hold. This is achieved by setting
\be\label{Deltas1choice} % CHOICE DELTA S1 
	\Delta s_{1} = \frac{b_{0}}{2}>0 
\ee 
Observe that $x_0(s)$ is increasing since $x_{0}'(s)=\cos(\varphi(s))>0$ as $\varphi \in (-\pi/2,0)$. 
%
%It follows that property \ref{propofgamma1} holds for $s \in [s_{0},s_{1}]$. 

% L to 0
Clearly we have
\be\label{b0tozeroasdeltato0}
	b_{0} < \delta_{0}
\ee
since $b_0$ is the vertical distance of $\gamma(s_0)$ to the $x_0$-axis which is less than the
distance along the sphere.

% change in angle
Of course, we do not achieve a final angle of $\pi/2$ of the normal at $s_{1}$ and gain only a small but definite increase in the angle. %and so we iterate this procedure. 
The change in angle of the normal with the $x_{0}$-axis is
$$
\Delta \theta_{1} =\theta(s_{1}) - \theta(s_{0}) 
	= \int_{s_{0}}^{s_{1}} k(s)\, ds = k_{1} \cdot \Delta s_{1} 
	= \frac{\sin(\theta(s_{0}))}{8} >0
	$$
	by (\ref{k1choice}) and (\ref{Deltas1choice}).

% inductive step
With $\gamma$ extended over the first interval $[s_{0},s_{1}]$, we now inductively define further extensions.
Assume that $\Delta s_{j}$, $s_{j}$ and $k_{j}$ have been chosen for $j=1,2,\ldots,(i-1)$, and $\gamma$ extended on the intervals $[s_{j},s_{j+1}]$, we then define 
\be\label{choosesiki}
	\Delta s_{i} = \frac{b_{i-1}}{2}, \qquad 
	s_{i} = s_{i-1} + \Delta s_{i}%s_{0} + \sum_{j=1}^{i-1} \Delta s_{j}
	\qquad \text{and} \qquad
	k_{i} = \frac{\sin(\theta(s_{i-1}))}{4b_{i-1}},
\ee
where $\gamma(s_{i})=(a_{i},b_{i})$. In what follows we will also write $\theta_j$ and $\varphi_j$ for $\theta(s_j)$ and $\varphi(s_j)$ respectively. We remark that 
$b_{i+1} < b_i$ by (\ref{curvaturegivescurve2}) since the angle $\varphi$ is negative and 
that $k_{i+1} > k_i$ since the ratio $\frac{\sin(\theta(s))}{x_{1}(s)}$ is increasing. 
Observe that properties \ref{propofgamma1}, \ref{propofgamma4}, and \ref{propofgamma5} of $\gamma$ hold on $[s_{i-1},s_{i}]$ for all $i$ by our choices in (\ref{choosesiki}) by arguments analogous to those given for the first extension of $\gamma$ on $[s_{0},s_{1}]$. 
%%
%Unfortunately, this construction is not enough to ensure property \ref{propofgamma6} as we now discuss.

% definite change in angle at each extension
We observe that we gain a definite amount of angle $\theta$ with each extension since, by (\ref{choosesiki}), for each $j \in \{1,2,\ldots, i\}$,
\begin{align}
\Delta \theta_{j} 
	=\theta(s_{j}) - \theta(s_{j-1}) 
	&= \int_{s_{j-1}}^{s_{i}} k(s)\, ds = k_{j} \cdot \Delta s_{j} 
	= \frac{\sin(\theta(s_{j-1}))}{8} \qquad  \notag\\
		&\ge \frac{\sin(\theta(s_{0}))}{8}, \label{thetaest}
\end{align}
because $\theta(s_{j-1}) \ge \theta(s_{0})$ and the the values of $\theta$ are in the range $(0,\pi/2)$ so that the sine is an increasing function. We stop the construction when
$\theta(s)$ reaches the value $\pi/2$.
Thus the total change in the angle $\theta$ over the interval $[0,s_{i}]$ is bounded from below by
\be\label{changeinthetagetsbig} 
\Delta \theta =  
	\sum_{j=1}^{i} \Delta \theta_{j} \ge  i \cdot \frac{\sin(\theta_{0})}{8}.
\ee

To prove property \ref{propofgamma5}, that the length of $\gamma$ is on the order of $\delta_{0}$, we  need the sequence of $b_{i}$'s to be summable and will want to compare it to the geometric progression. The difficulty here is that, since our curve is bending more and more upwards,
the ratios $b_i/b_{i-1}$ increase. For this reason we stop our induction when $\theta$ reaches
the value of $\pi/4$. It will turn out that once this value is reached, we can complete the
construction of $k(s)$ by a single extension albeit with $\Delta s$ not given by (\ref{choosesiki}).

Thus, define  $n=n(\delta_{0})$ to be the first positive integer with 
\be\label{choiceofn}
	\frac{\pi}{4} \le \theta_{n} 
\ee
which exists by (\ref{changeinthetagetsbig}).
Moreover,
if $\theta_{n}>\pi/4$ we re-define $s_{n}$ to be the exact value in $(s_{n-1},\infty)$ such that $\theta(s_{n})=\pi/4$. Thus, for the modified value of $s_n$
\be\label{snthetan=pi/4}
	\theta_{n} = \theta(s_n) = \frac{\pi}{4}.
\ee

% Length to 0
%An essential ingredient for showing property \ref{propofgamma6} is that the sequence of lengths of the extensions is summable. This is precisely what the Claim below allows.\\

The following Lemma gives the desired comparison.

%claim 
\bl\label{claim}
There exists a universal constant $C\in (0,1)$, independent of $\delta_{0}$ and $K$, such that 
for all $i \le n$
$$
	b_{i} \le C \cdot b_{i-1}, 
$$
where $n=n(\delta_0)$ is as above.
\el

% proof that length goes to zero
The Lemma, to be proven shortly below, implies that the length of the curve $\gamma$ on the entire interval $[0,s_{n}]$ is no larger than a constant (independent of $\delta_{0}$) times $\delta_{0}$. Namely,
\be
	L(\gamma([0,s_{n}])) = s_{n} = \sum_{j=1}^{n} \Delta s_{j}.
\ee
Thus, from (\ref{choosesiki}) and Lemma (\ref{claim}), we have
\begin{equation}
	\sum_{j=1}^{n} \Delta s_{j} = \sum_{j=1}^{n} \frac{b_{j-1}}{2} 
	\le \frac{b_{0}}{2} \left( \sum_{j=1}^{n-1} C^{j} \right) \leq C_1 \delta_0
	 \label{lengthsn}
\end{equation}
by the lemma and (\ref{b0tozeroasdeltato0}).
So, $L(\gamma([0,s_{n}])) \le C' b_{0}$ with $C'= \frac{1}{2-2C}$ which is independent of $\delta_{0}$ since $C$ is. This proves that $L(\gamma([0,s_{n}])) = O(\delta_0)$. \\

% proof of claim
%We now prove Lemma \ref{claim}.
%
\begin{proof}[Proof of Lemma \ref{claim}]
Let $1 \le i \le n$. We compute explicitely using (\ref{curvaturegivescurve1}), (\ref{curvaturegivescurve2}) and (\ref{choosesiki}),
\be
\varphi(s_{i}) = 
 \varphi(s_{i-1}) + k_{i}\cdot \Delta s_{i} 
=\varphi(s_{i-1})+\frac{\sin(\theta_{i-1})}{8}\label{varphisi}
\ee
and
\begin{align*}
b_{i} &= x_{1}(s_{i}) \\%= x_{1}(s_{0}+\Delta s_{1}) \\
&= b_{i-1} + \int_{s_{i-1}}^{s_{i}}  \sin(\varphi(s_{i-1})+k_{i}(u-s_{i-1}))\, du\\
&= b_{i-1} - \frac{1}{k_{i}} \left( \cos(\varphi(s_{i})) - \cos(\varphi(s_{i-1})) \right)\\
&=  b_{i-1} - \frac{4b_{i-1}}{\sin(\theta(s_{i-1}))} \left( \cos \left(\varphi(s_{i-1})+\frac{\sin(\theta_{i-1})}{8} \right) - \cos(\varphi(s_{i-1})) \right). 
\end{align*}
Thus,
\begin{align*}
\frac{b_{i}}{b_{i-1}} &= 1 - \frac{4}{\sin(\theta(s_{i-1}))} \left( \cos \left( \varphi(s_{i-1})+\frac{\sin(\theta_{i-1})}{8} \right) - \cos(\varphi(s_{i-1})) \right).
\end{align*}
Therefore, by the Mean Value Theorem, there exists $\mu_{i} \in (\varphi(s_{i-1}),\varphi(s_{i-1})+\sin(\theta(s_{i-1}))/8)$ such that
\begin{align*}
\frac{b_{i}}{b_{i-1}} &= 1 - \frac{4}{\sin(\theta(s_{i-1}))} (-\sin(\mu_{i})) \cdot \frac{\sin(\theta(s_{i-1}))}{8} = 1 + \frac{\sin(\mu_{i})}{2}. 
\end{align*}
To complete the proof of the claim, we seek a constant $0<C<1$, independent of $\delta_{0}$, such that
\begin{align}\label{equivalentclaim}
1 + \frac{\sin(\mu_{i})}{2} < C < 1.
\end{align}
Recall that the angle function $\varphi$ takes negative values throughout. 

We claim that the choice 
\be\label{choiceofC}
C=1+\frac{1}{4} \sin \left( -\frac{\pi}{4}+\frac{\cos(-\frac{\pi}{4})}{8} \right) 
\approx
0.8395
\ee
will satisfy our requirement. 

This follows from the fact that the sine is an increasing function on the interval $(\varphi(s_{i-1}),\varphi(s_{i-1})+\sin(\theta(s_{i-1}))/8)$ and the fact that both the angles $\varphi_{i}$ and $\theta_{i}$ are increasing, so
\begin{align*}
1+\frac{\sin(\mu_{i})}{2} &\le 1+\frac{1}{2}\sin \left( \varphi(s_{i-1})+\frac{\sin(\theta(s_{i-1}))}{8} \right)\\
&\le 1+ \frac{1}{2} \sin \left( \varphi(s_{n})+\frac{\cos(\varphi(s_{n}))}{8} \right).
\end{align*}
% note on need choice of n
By our choice of $s_n$, $\theta(s_{n})=\pi/4$ from (\ref{snthetan=pi/4}) and $\varphi(s_{n})=-\pi/4$ so that
\begin{align*}
1+\frac{\sin(\mu_{i})}{2} &\le
1+ \frac{1}{2} \sin \left( -\frac{\pi}{4}+\frac{\cos \left(-\frac{\pi}{4} \right)}{8} \right)\\
&< 1+ \frac{1}{4} \sin \left( -\frac{\pi}{4}+\frac{\cos \left(-\frac{\pi}{4} \right)}{8} \right)\\
&= C <1.
\end{align*}
This finishes the proof of the Lemma.
\end{proof}
At this stage of the construction, $\gamma$ has angle $\theta=\pi/4$ at the endpoint $s_{n}$.   
We make one additional extension of our step function.\\

%FINAL BEND

We now define $s_{n+1}>s_{n}$ and $k_{n+1}>0$ as follows.  

By (\ref{curvaturegivescurve1}) $\varphi(s)$ in $[s_{n},s_{n+1}]$ will be given by
\be\label{varphifinal1}
	\varphi(s) = \varphi_{n} + \int_{s_{n}}^{s} k(u)\, du = \varphi_{n} + k_{n+1}(s-s_{n}).
\ee 
Let  $s_{n+1}$ be determined by $k_{n+1}$ as the first value such that $\varphi(s_{n+1})=0$ (equivalently $\theta(s_{n+1})=\pi/2$). Then 
\be\label{varphi=0}
	0= \varphi(s_{n+1})  = \varphi_{n} + k_{n+1}(s_{n+1}-s_{n})
\ee
so that 
\be\label{sn+1} 
	s_{n+1} = s_{n} - \frac{\varphi_{n}}{k_{n+1}}.
\ee
We require in addition that $b(s_{n+1})>0$ (that is, $\gamma$ remains above the $x_{0}$-axis).  Using (\ref{sn+1}) and (\ref{curvaturegivescurve2}), we obtain
\begin{align}
	b(s_{n+1}) &= b_{n} + \int_{s_{n}}^{s_{n+1}} \sin(\varphi(s))\, ds
		%= b_{n} - \left. \frac{\cos(\varphi(s))}{k_{n+1}} \right|_{s_{n}}^{s_{max}}
		= b_{n} - \frac{\cos(\varphi(s_{n+1})) - \cos(\varphi(s_{n})) }{k_{n+1}} \notag\\
		&= b_{n} - \frac{1 - \cos(\varphi(s_{n})) }{k_{n+1}} 
\end{align}
so that $b(s_{n+1})>0$ is equivalent to 
\[
	b_{n} - \frac{ 1 - \cos(\varphi(s_{n})) }{k_{n+1}} > 0
\]
or
\be\label{kbelow}
	k_{n+1}\cdot b_{n} > 1 - \cos(\varphi(s_{n})).
\ee

On the other hand, $k_{n+1}$ has to be bounded from above in order to guarantee (\ref{tunnellemma-scalarcurv-conditiononk}). Therefore, we require that
\[
	k_{n+1} < \frac{\sin(\theta(s_{n}))}{2b_{n}},
\]
or
\be\label{kabove}
	k_{n+1}\cdot b_{n} < \frac{\sin(\theta(s_{n}))}{2}.
\ee
Combining (\ref{kbelow}) and (\ref{kabove}) gives  conditions for $k_{n+1}$
\be\label{conditiononkn+1-1}
	1 - \cos(\varphi(s_{n})) < k_{n+1}\cdot b_{n} < \frac{\sin(\theta(s_{n}))}{2}.
\ee
Since $\sin(\theta(s))=\cos(\varphi(s))$, (\ref{conditiononkn+1-1}) is equivalent to
\be\label{conditiononkn+1-2}
	1 - \cos(\varphi(s_{n})) < k_{n+1}\cdot b_{n} < \frac{\cos(\varphi(s_{n}))}{2}.
\ee

Now, recall that $s_{n}$ was chosen in (\ref{snthetan=pi/4}) so that $\varphi(s_{n})=-\pi/4$ so 
\[
1-\cos(\varphi(s_{n}))=\frac{2-\sqrt{2}}{2} < \frac{\cos(\varphi(s_{n}))}{2} = \frac{\sqrt{2}}{4}.
\]
Now, choose arbitrarily any $\alpha$,  satisfying
\be\label{defofalpha}
\frac{2-\sqrt{2}}{2} < \alpha < \frac{\sqrt{2}}{4}
\ee
and define $k_{n+1}$ by 
\be\label{choicekn+1}
k_{n+1}=\alpha/b_{n}.
\ee 
With this choice (\ref{conditiononkn+1-2}), and therefore, (\ref{kbelow}) and (\ref{kabove}) hold.
%and notice that (\ref{conditiononkn+1-2}) is indeed fulfilled. \\

%\vspace{-10pt}
\begin{figure}[h]
	\begin{center}
	\includegraphics[scale=.25]{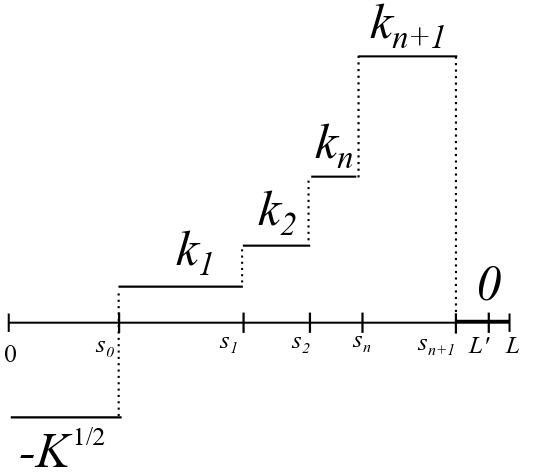}
	\end{center}
	\caption{Graph of the curvature, $k(s)$, with ``full bend'' as a step function.}\label{F:finalkC1} 
\end{figure} 

% one last extension to have tiny bit of straight line segment to glue
To ensure property \ref{propofgamma2}, we  choose $L>s_{n+1}$ so that $L-s_{n+1}$ is arbitrarily small. We extend $\gamma$ to the interval $[s_{n+1},L]$ where $\gamma$ is a straight horizontal line on $[s_{n+1},L]$ by choosing $k(s)=0$ there. To check that the length of the
curve we constructed is $O(\gamma_0)$ we observe that 
\be\label{sn+1to0}
s_{n+1} = s_{n} - \varphi_{n}/k_{n+1} = s_{n} + \frac{\pi}{4\alpha}b_{n} 
	\le s_{n} + \frac{\pi}{4\alpha}b_{0} = O(\delta_{0})
\ee
by (\ref{b0tozeroasdeltato0}), (\ref{lengthsn}) and (\ref{sn+1to0}).

We note that the choice of $L$ is arbitrary. It will be made explicit in the next step when we construct the curve $\bar{\gamma}$, the $C^{\infty}$ version of $\gamma$.\\

This completes the construction of the continuously differentiable curve $\gamma$ defined on the interval $[0,L]$ satisfying properties \ref{propofgamma1} through \ref{propofgamma5}.

% STEP 3: CONSTRUCTION OF THE CURVE GAMMA, PART 2: SMOOTHING
\subsection{Step 3 of the Proof: Construction of $\gamma$, Part 2: from $C^{1}$ to $C^{\infty}$.}\label{subsection:step3}
In this section, barred quantities will refer to the $C^{\infty}$ curve $\bar{\gamma}(s)$ to be constructed in this step and all the other quantities related to the construction (for example, $\bar{\theta}$, $\bar{\varphi}$, $\bar{k}(s)$, etc.). Unbarred quantities will refer to the $C^{1}$ curve constructed in the previous step.% (for example, $\gamma(s)$ and $k(s)$, etc.)

The general plan is to replace $k(s)$ as chosen in Step 2 with a smooth version $\bar{k}(s)$ as depicted in Figure~\ref{F:finalksmooth}, which will then define $\bar{\gamma}$ by the
formulae (\ref{curvaturegivescurve1}) and (\ref{curvaturegivescurve2}). Set $k_0=-K^{1/2}$ and modify $k(s)$ on $[s_i,s_{i+1}]$ for $i=0,1,2,\ldots ,n$ so that
 the graph of $\bar{k}(s)$ will connect to the constant function equal to $k_{i}$ smoothly at $s_i$, will rise steeply to the value $k_{i+1}$ in a very short interval $[s_i,s_i+\alpha]$ and will connect smoothly with constant function equal to $k_{i+1}$ in $[s_i+\alpha, s_{i+1}]$. For each $i=0,1,2,\ldots n$, $\bar{k}|[s_i,s_{i+1}]$ can be constructed as follows.  
Choose and fix a $C^\infty$ function $g(s)$ which is identically 0 for $s<0$, identically 1 for $s>1$, and strictly increasing on $[0,1]$. Then $\bar{k}| [s_i,s_{i+1}]$ is constructed by appropriate rescaling and translations of the graph of $g(s)$
in both vertical and horizontal directions. The values of $k_i$ and $k_{i+1}$ determine the transformations along the
vertical axis but rescaling of the independent variable remains a free parameter $\alpha$ to be
set sufficiently small later.
We will use the same value of $\alpha$ for every $i=1,2,\ldots n$.

\begin{figure}[h]
	\begin{center}
	\includegraphics[scale=.22]{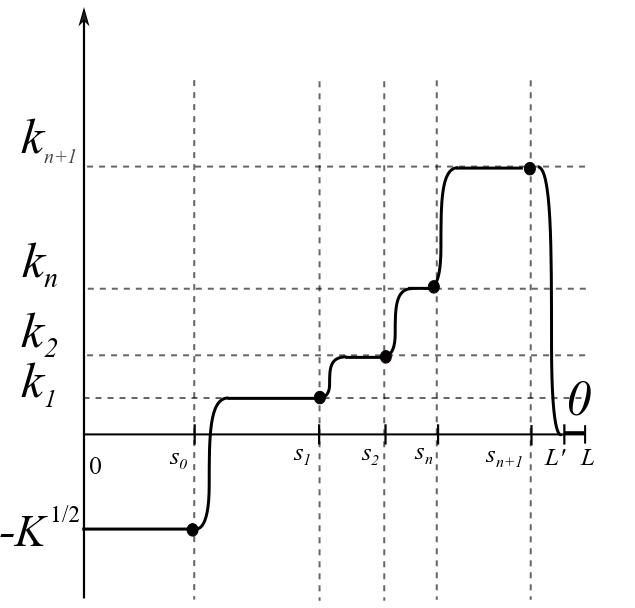}
	\end{center}
	\caption{Graph of the smooth curvature $\bar{k}(s)$ with ``full bend.''}\label{F:finalksmooth} 
\end{figure} 

Since $$\Delta \bar{\theta}=\int_0^{s_{n+1}}\bar{k}\,ds \leq \int_0^{s_{n+1}}{k}\,ds = \Delta \theta ,$$
we loose a small amount of "bend" so that $\bar\theta(s_{n+1}) < \frac{\pi}{2}$ by a very small amount controlled by $\alpha$. We compensate for this by one final extension of $\bar{k}$ to
an interval $[s_{n+1},L]$ with $L=s_{n+1} + 2\beta$. We choose $\bar{k}$ so that it connects
smoothly with $k_{n+1}$ at $s_{n+1}$, drops smoothly to zero over $[s_{n+1}, s_{n+1}+\beta]$
and continues identically zero on $[s_{n+1}+\beta, s_{n+1}+2\beta]$. $\beta$ and
$\bar{k}$ are chosen so that $$
\int_{s_{n+1}}^{s_{n+1}+\beta} \bar{k}(s)\, ds = \frac{\pi}{2} - \bar{\theta}(s_{n+1}).$$
This ensures that $\bar{\theta}=\frac{\pi}{2}$ in the interval $[s_{n+1}+\beta, s_{n+1}+2\beta]$. This final extension 
is constructed as the preceding ones except that we have to use the reflection $s \mapsto -s$ before rescaling and
translating the original fuction $g$. We note
that $\beta=O(\alpha)$ is determined by the choice of $\alpha$ and the requirement that $\bar{\theta}(L)=\frac{\pi}{2}$. We also observe that as $\alpha$ tends to zero, the functions $\bar{\varphi}$, $\bar{\theta}$,
$\bar{x_0}$, and $\bar{x_1}$ will converge uniformly on $[0,L]$ to $\varphi$, $\theta$, $x_0$, and $x_1$ respectively as follows from (\ref{curvaturegivescurve1}) and (\ref{curvaturegivescurve2}).

We now check that the properties \ref{propofgamma1} through \ref{propofgamma5} on page \pageref{propofgamma1} hold for the curve $\bar{\gamma}$ for sufficiently small choice of $\alpha$. Only \ref{propofgamma4} and \ref{propofgamma5} need a verification. \ref{propofgamma5} follows since $L = s_{n+1} + 2\beta = O(\delta_0) + O(\alpha)$. To prove \ref{propofgamma4} we use the uniform convergence on $[0,s_{n+1}]$ as $\alpha$ approaches 0 of
$\frac{\sin
\bar{\theta}(s)}{2\bar{x}_1(s)}$ to $\frac{\sin \theta(s)}{2x_1(s)}$. More precisely, on $[s_i,s_{i+1}]$,
$$
\frac{\sin \bar{\theta}(s)}{2\bar{x}_1(s)} -\bar{k}(s) = \left ( \frac{\sin \bar{\theta}(s)}{2\bar{x}_1(s)} - k_{i+1}\right )
+\left ( k_{i+1} -\bar{k}(s)\right ).
$$
For sufficiently small $\alpha$, the first term on the right becomes positive by the property \ref{propofgamma4}
for the curve $\gamma$ while the second term is nonnegative by construction (cf.\ Figure \ref{F:finalkC1}). Finally,
in the last interval $[s_{n+1},L]$ the ratio $\frac{\sin\bar{\theta} (s)}{2\bar{x}_1(s)}$ is nondecreasing so that
$$
\frac{\sin\bar{\theta}(s)}{2\bar{x}_1(s)}\geq \frac{\sin\bar{\theta}(s_{n+1})}{2\bar{x}_1(s_{n+1})} > k_{n+1}
$$
since the last inequality was verified for $s=s_{n+1}$ already. Property \ref{propofgamma4} follows since
$k_{n+1} > \bar{k}(s)$ in $[s_{n+1},L]$. This finishes the construction of $\bar{\gamma}$.

%%%%%%%%%%%%%%%%%%%%

% STEP 4: DIAMETER ESTIMATES
\subsection{Step 4 of the Proof: Diameter and volume estimates of Lemma \ref{tunnellemma}}
\label{subsection:step4}
Given the definition of $U$ in (\ref{defofU}), the diameter of $U$ is estimated by
$$\label{diameterU0}
\diam(U)
\le \pi \delta+\delta+ 2L = O(\delta)+O(\delta_0)=O(\delta).
%= O(\delta).
%2\pi(\delta/2)+2(\delta/2-\delta_0) + 2L(\gamma(\delta)),
$$
To estimate the volume of $U'$, note that the intersection of $U'$ with the hyperplane $x_0=x_0(s)=c$ for $0<s<L$
is a sphere of two dimensions and of radius $x_1(s)< \delta_0$. It follows by Fubini's theorem that
$\vol(U')=O(\delta_0^3)$. To prove (\ref{TL-volumeestU}) recall that $U$ is obtained from the union of
two disjoint balls of radius $\delta$ by removing balls of radius $\delta_0$ and attaching $U'$ along the
common boundary (cf.\ Figure \ref{Fig.SY-GL-tunnel}). Since the  volumes of the removed balls and of the added tunnel
are $O(\delta_0^3)$, the estimate (\ref{TL-volumeestU}) follows by choosing $\delta_0$ sufficiently small depending on $\epsilon$. The estimate (\ref{TL-volumeestN}) is proved in the same way. The proof of Lemma \ref{tunnellemma} is now complete.
\end{proof}

\bibliographystyle{alpha}
\bibliography{basilio}

\begin{thebibliography}{Gro14b}

\bibitem[AGS14]{AGS}
Luigi Ambrosio, Nicola Gigli, and Giuseppe Savar{\'e}.
\newblock Metric measure spaces with {R}iemannian {R}icci curvature bounded
  from below.
\newblock {\em Duke Math. J.}, 163(7):1405--1490, 2014.

\bibitem[AK00]{AK}
Luigi Ambrosio and Bernd Kirchheim.
\newblock Currents in metric spaces.
\newblock {\em Acta Math.}, 185(1):1--80, 2000.

\bibitem[AT04]{Ambrosio-Tilli-AoMS}
Luigi Ambrosio and Paolo Tilli.
\newblock {\em Topics on analysis in metric spaces}, volume~25 of {\em Oxford
  Lecture Series in Mathematics and its Applications}.
\newblock Oxford University Press, Oxford, 2004.

\bibitem[Bam16]{Bamler-16}
Richard Bamler.
\newblock A {R}icci flow proof of a result by {G}romov on lower bounds for
  scalar curvature.
\newblock {\em Mathematical Research Letters}, 23(2):325--337, 2016.

\bibitem[BBI01]{BBI}
Dmitri Burago, Yuri Burago, and Sergei Ivanov.
\newblock {\em A course in metric geometry}, volume~33 of {\em Graduate Studies
  in Mathematics}.
\newblock American Mathematical Society, Providence, RI, 2001.

\bibitem[BI09]{Burago-Ivanov-Area}
Dimitri Burago and Sergei Ivanov.
\newblock Area spaces: First steps, with appendix by nigel higson.
\newblock {\em Geometric and Functional Analysis}, 19(3):662--677, 2009.

\bibitem[BS17]{Basilio-Sormani-1}
Jorge Basilio and Christina Sormani.
\newblock Sequences of three dimensional manifolds with positive scalar
  curvature.
\newblock {\em preprint to appear}, 2017.

\bibitem[CC97]{ChCo-PartI}
Jeff Cheeger and Tobias~H. Colding.
\newblock On the structure of spaces with {R}icci curvature bounded below. {I}.
\newblock {\em J. Differential Geom.}, 46(3):406--480, 1997.

\bibitem[FF60]{FF}
Herbert Federer and Wendell~H. Fleming.
\newblock Normal and integral currents.
\newblock {\em Ann. of Math. (2)}, 72:458--520, 1960.

\bibitem[Fuk87]{Fukaya-1987}
Kenji Fukaya.
\newblock Collapsing of {R}iemannian manifolds and eigenvalues of {L}aplace
  operator.
\newblock {\em Invent. Math.}, 87(3):517--547, 1987.

\bibitem[GL80a]{Gromov-Lawson-tunnels2}
Mikhael Gromov and H.~Blaine Lawson.
\newblock The classification of simply connected manifolds of positive scalar
  curvature.
\newblock {\em Annals of Mathematics}, 111(3):423--434, 1980.

\bibitem[GL80b]{Gromov-Lawson-tunnels}
Mikhael Gromov and H.~Blaine Lawson, Jr.
\newblock Spin and scalar curvature in the presence of a fundamental group.
  {I}.
\newblock {\em Ann. of Math. (2)}, 111(2):209--230, 1980.

\bibitem[Gra98]{Gray-CurveSurfBook}
Alfred Gray.
\newblock {\em Modern differential geometry of curves and surfaces with
  Mathematica}.
\newblock CRC Press, second edition, 1998.

\bibitem[Gro99]{Gromov-metric}
Misha Gromov.
\newblock {\em Metric structures for {R}iemannian and non-{R}iemannian spaces},
  volume 152 of {\em Progress in Mathematics}.
\newblock Birkh\"auser Boston Inc., Boston, MA, 1999.
\newblock Based on the 1981 French original [ MR0682063 (85e:53051)], With
  appendices by M. Katz, P. Pansu and S. Semmes, Translated from the French by
  Sean Michael Bates.

\bibitem[Gro14a]{Gromov-Dirac}
Misha Gromov.
\newblock Dirac and {P}lateau billiards in domains with corners.
\newblock {\em Cent. Eur. J. Math.}, 12(8):1109--1156, 2014.

\bibitem[Gro14b]{Gromov-Plateau}
Misha Gromov.
\newblock Plateau-{S}tein manifolds.
\newblock {\em Cent. Eur. J. Math.}, 12(7):923--951, 2014.

\bibitem[HLS17]{HLS}
Lan-Hsuan Huang, Dan Lee, and Christina Sormani.
\newblock Stability of the positive mass theorem for graphical hypersurfaces of
  {E}uclidean space.
\newblock {\em Journal fur die Riene und Angewandte Mathematik (Crelle's
  Journal)}, 727:269--299, 2017.

\bibitem[LS13]{Lakzian-Sormani}
Sajjad Lakzian and Christina Sormani.
\newblock Smooth convergence away from singular sets.
\newblock {\em Comm. Anal. Geom.}, 21(1):39--104, 2013.

\bibitem[LS14]{LeeSormani1}
Dan~A. Lee and Christina Sormani.
\newblock {S}tability of the positive mass theorem for rotationally symmetric
  riemannian manifolds.
\newblock {\em Journal fur die Riene und Angewandte Mathematik (Crelle's
  Journal)}, 686, 2014.

\bibitem[LS15]{LeFloch-Sormani-1}
Philippe~G. LeFloch and Christina Sormani.
\newblock The nonlinear stability of rotationally symmetric spaces with low
  regularity.
\newblock {\em J. Funct. Anal.}, 268(7):2005--2065, 2015.

\bibitem[LV09]{Lott-Villani-09}
John Lott and C{\'e}dric Villani.
\newblock {R}icci curvature for metric-measure spaces via optimal transport.
\newblock {\em Ann. of Math. (2)}, 169(3):903--991, 2009.

\bibitem[MP15]{Matveev-Portegies}
Rostitslav Matveev and Jacobus Portegies.
\newblock Intrinsic flat and {G}romov-{H}ausdorff convergence of manifolds with
  {R}icci curvature bounded below.
\newblock {\em arXiv:1510.07547}, 2015.

\bibitem[Por15]{Portegies-F-evalue}
Jacobus~W. Portegies.
\newblock Semicontinuity of eigenvalues under intrinsic flat convergence.
\newblock {\em Calc. Var. Partial Differential Equations}, 54(2):1725--1766,
  2015.

\bibitem[RS01]{Rosenberg-Stolz-PSC2001}
Jonathan Rosenberg and Stephen Stolz.
\newblock {M}etrics of positive scalar curvature and connections with surgery.
\newblock In Andrew~Ranicki Sylvain~Cappell and Jonathan Rosenberg, editors,
  {\em {S}urveys on {S}urgery {T}heory}, number 149 in Annals of Mathematics
  Studies 2. Princeton University Press, 2001.

\bibitem[Sor14]{Sormani-AA}
Christina Sormani.
\newblock Intrinsic flat {A}rzela-{A}scoli theorems.
\newblock {\em Communications in Analysis and Geometry}, 27(1), 2019 (on arxiv
  since 2014).

\bibitem[Sor17]{Sormani-scalar}
Christina Sormani.
\newblock Scalar curvature and intrinsic flat convergence.
\newblock In Nicola Gigli, editor, {\em Measure Theory in Non-Smooth Spaces},
  chapter~9, pages 288--338. De Gruyter Press, 2017.

\bibitem[SS17]{Sormani-Stavrov-1}
C~Sormani and I~Stavrov.
\newblock Geometrostatic manifolds of small {ADM} mass.
\newblock {\em arXiv: 1707.03008}, 2017.

\bibitem[Stu06a]{Sturm-curv}
Karl-Theodor Sturm.
\newblock A curvature-dimension condition for metric measure spaces.
\newblock {\em C. R. Math. Acad. Sci. Paris}, 342:197?200, 2006.

\bibitem[Stu06b]{Sturm-06}
Karl-Theodor Sturm.
\newblock On the geometry of metric measure spaces. {I}.
\newblock {\em Acta Math.}, 196(1):65--131, 2006.

\bibitem[SW10]{SW-CVPDE}
Christina Sormani and Stefan Wenger.
\newblock Weak convergence and cancellation, appendix by {R}aanan {S}chul and
  {S}tefan {W}enger.
\newblock {\em Calculus of Variations and Partial Differential Equations},
  38(1-2), 2010.

\bibitem[SW11]{SW-JDG}
Christina Sormani and Stefan Wenger.
\newblock Intrinsic flat convergence of manifolds and other integral current
  spaces.
\newblock {\em Journal of Differential Geometry}, 87, 2011.

\bibitem[SY79a]{Schoen-Yau-tunnels}
R.~Schoen and S.~T. Yau.
\newblock On the structure of manifolds with positive scalar curvature.
\newblock {\em Manuscripta Math.}, 28(1-3):159--183, 1979.

\bibitem[SY79b]{Schoen-Yau-positive-mass}
Richard Schoen and Shing~Tung Yau.
\newblock On the proof of the positive mass conjecture in general relativity.
\newblock {\em Comm. Math. Phys.}, 65(1):45--76, 1979.

\bibitem[Vil09]{Villani-text}
C{\'e}dric Villani.
\newblock {\em Optimal transport}, volume 338 of {\em Grundlehren der
  Mathematischen Wissenschaften [Fundamental Principles of Mathematical
  Sciences]}.
\newblock Springer-Verlag, Berlin, 2009.
\newblock Old and new.

\bibitem[Wen11]{Wenger-compactness}
Stefan Wenger.
\newblock Compactness for manifolds and integral currents with bounded diameter
  and volume.
\newblock {\em Calculus of Variations and Partial Differential Equations},
  40(3-4):423--448, 2011.

\end{thebibliography}
\end{document}